\documentclass[small]{article}

\usepackage{amsmath,amstext,amsthm,amsfonts}
\usepackage{makeidx}

\usepackage{marginnote}
\usepackage{bbding}

\usepackage{wasysym}
\usepackage{amssymb,latexsym}
\usepackage{txfonts, pxfonts}
\usepackage[english]{babel}
\usepackage[pdftex]{graphicx}
\usepackage{xcolor}
\numberwithin{equation}{section}
\newtheorem{theorem}{\textbf{Theorem}}[section]
\newtheorem{proposition}[theorem]{\textbf{Proposition}}

\newtheorem{lemma}[theorem]{\textbf{Lemma}}
\newtheorem{corollary}[theorem]{Corollary}

\theoremstyle{definition}
\newtheorem{definition}[theorem]{\textbf{Definition}}
\theoremstyle{remark}
\newtheorem{remark}[theorem]{\it{Remark}}
\newenvironment{notation}[1][Notation.]{\begin{trivlist}
\item[\hskip \labelsep {\bfseries #1}]}{\end{trivlist}}

\def\e{\epsilon}
\def\ei{\epsilon_i}
\def\R{\mathbb{R}}
\def\Rn{{\mathbb{R}}^n_+}

\def\d{\partial}

\def\fermi{\psi_i:B^+_{\delta}(0)\to M}
\def\fermilinha{\psi_i:B^+_{\delta'}(0)\to M}
\def\a{\alpha}
\def\b{\beta}

\def\l{\lambda}

\def\Sei{S^+_{\delta\ei^{-1}}}
\def\Bei{B^+_{\delta\ei^{-1}}}
\def\Dei{D_{\delta\ei^{-1}}}
\def\Beilinha{B^+_{\delta'\ei^{-1}}}
\def\Deilinha{D_{\delta'\ei^{-1}}}

\def\ba{\begin{align}}
\def\ea{\end{align}}
\def\bp{\begin{proof}}
\def\ep{\end{proof}}

\def\cmedia{h}
\def\ds{d\sigma}

\renewcommand{\(}{\left(}
\renewcommand{\)}{\right)}

\begin{document}

\title{A compactness theorem for conformal metrics with constant scalar curvature and constant boundary mean curvature in dimension three}
\date{}

\author{\textsc{S\'ergio Almaraz}\footnote{Partially supported by grants 202.802/2019 and 201.049/2022, FAPERJ/Brazil.}  \textsc{and Shaodong Wang}\footnote{Partially supported by NSFC 12001364.}}

\maketitle

\begin{abstract}

On a compact three-dimensional Riemannian manifold with boundary, we prove the compactness of the full set of conformal metrics with positive constant scalar curvature and constant mean curvature on the boundary. This involves a blow-up analysis of a Yamabe equation with critical Sobolev exponents both in the interior and on the boundary.

\end{abstract}

\noindent\textbf{Keywords:} compactness theorem, Yamabe problem, manifolds with boundary, scalar curvature, mean curvature.

\noindent\textbf{Mathematics Subject Classification 2000:}  53C21, 35J65, 35R01.


\section{Introduction}\label{sec:intr}

Let $(M,g)$ be a n-dimensional Riemannian manifold with boundary $\d M$, $n\geq 3$, and let $\nabla$ be its Riemannian connection. Denote by $R_g$ its scalar curvature and by $\Delta_g$ its Laplace-Beltrami operator, which is the Hessian trace. By $h_g$ we denote the boundary mean curvature with respect to the inward normal vector $\eta$, i.e. $h_g=-\frac{1}{n-1}\rm{div}_g\eta$.
In this paper we study the question of compactness of the full set of positive solutions to the equations
\begin{align}\label{main:equation}
\begin{cases}
L_{g}u+Ku^p=0,&\text{in}\:M,
\\
B_{g}u+cu^{\frac{p+1}{2}}=0,&\text{on}\:\partial M,
\end{cases}
\end{align}
where $1< p\leq (n+2)/(n-2)$ and $K, c\in \mathbb R$. Here, $L_g=\Delta_g-\frac{n-2}{4(n-1)}R_g$ is the conformal Laplacian and $B_g=\frac{\d}{\d\eta}-\frac{n-2}{2}h_g$ is the conformal boundary operator.

These equations have a very interesting geometrical meaning when $p=(n+2)/(n-2)$. A smooth solution $u>0$ of \eqref{main:equation} represents a conformal metric $\tilde g=u^{\frac{4}{n-2}}g$ with scalar curvature $R_{\tilde g}=4(n-1)K/(n-2)$ and boundary mean curvature $h_{\tilde g}=2c/(n-2)$, as \eqref{main:equation} becomes a particular case of the well known equations
\begin{align*}
\begin{cases}
L_{g}u+\frac{n-2}{4(n-1)}R_{\tilde g}u^{\frac{n+2}{n-2}}=0,&\text{in}\:M,
\\
B_{g}u+\frac{n-2}{2}h_{\tilde g}u^{\frac{n}{n-2}}=0,&\text{on}\:\partial M.
\end{cases}
\end{align*}

For $c$ non-negative,  the equations \eqref{main:equation} have a variational formulation in terms of the functional
\begin{align*}
Q_{c,p}(u)=\frac{
\int_M\(|\nabla_gu|^2+\frac{n-2}{4(n-1)}R_gu^2\)dv_g+\frac{n-2}{2}\int_{\d M}h_gu^2d\sigma_g
}{
\left(\int_{M}|u|^{p+1}dv_g\right)^{\frac{2}{p+1}}
+c\left(\int_{\d M}|u|^{\frac{p+3}{2}}d\sigma_g\right)^{\frac{4}{p+3}}
}\,,
\end{align*}
where $dv_g$ and  $d\sigma_g$ denote the volume and area forms of $M$ and $\d M$ respectively. A function $u$ is a critical point for $Q_{c,p}$ if and only if it solves \eqref{main:equation}. However, direct methods fail to work when $p=(n+2)/(n-2)$ as $p+1=2n/(n-2)$ and $(p+3)/2=2(n-1)/(n-2)$ are critical for the Sobolev embeddings $H^1(M)\hookrightarrow L^{p+1}(M)$ and $H^1(M)\hookrightarrow L^{\frac{p+3}{2}}(\d M)$ respectively.
Other variational formulations have been used in \cite{chen-ruan-sun, escobar4, han-li1, han-li2}.

Following \cite{chen-sun} we define the conformal invariant 
\begin{align*}
Q_c(M)=\inf\,\big\{Q_{c, \frac{n+2}{n-2}}(u):\:u\in C^1({M}), u\nequiv 0 \:\text{in}\: M\big\}\,,
\end{align*}
which is always finite and whose sign is independent of the constant $c\geq 0$. The constant $Q_c(M)$ has the same sign of the first eigenvalue $\l_1(L_g)$ of the problem
\begin{align*}
\begin{cases}
L_{g}u+\l u=0,&\text{in}\:M,
\\
B_{g}u=0,&\text{on}\:\partial M\,.
\end{cases}
\end{align*}
We say that $M$ is of positive, negative, or zero type if $\l_1(L_g)>0$, $\l_1(L_g)<0$, or $\l_1(L_g)=0$ respectively. 

The existence of conformal metrics with constant scalar curvature and constant boundary mean curvature was first studied by Escobar in \cite{escobar3, escobar2,  escobar4} (see also \cite{araujo}) motivated by the classical Yamabe problem on closed manifolds, while the regularity of solutions was obtained by Cherrier in \cite{cherrier}. 
The case when $K=0$, which corresponds to scalar flat metrics, is also known as the Escobar-Riemann problem (See \cite{ahmedou, almaraz1, chen, escobar3, coda1, coda2}). The case $c=0$, which corresponds to minimal boundary, is also of interest (see \cite{brendle-chen, escobar2}).
In either case, the module of the non-zero constant can be prescribed by simply multiplying the metric by a positive constant.  On the other hand, the existence of solutions when both $K$ and $c$ are non-zero is more subtle as a different approach is needed in order to prescribe both constants.
When $M$ is of positive type, the search for solutions with $K>0$ with any given $c\in\mathbb R$ is known as the Han-Li conjecture (see \cite{chen-ruan-sun, han-li1, han-li2}). 

In this paper we are interested in the setting of the Han-Li conjecture. When $M$ is a three-dimensional Riemannian manifold of positive type we study the compactness of the full set of solutions to the equations \eqref{main:equation} when $K>0$ and $c\in\mathbb R$ (see Remark \ref{rmk:neg} below for a discussion on the case $K<0$).
We will adopt the normalization $K=n(n-2)$. In order to have a conformally invariant version of \eqref{main:equation} we will work with the equations
\begin{align}\label{main:eq:conf}
\begin{cases}
L_{g}u+n(n-2)f^{-\tau} u^p=0,&\text{in}\:M,
\\
B_{g}u+c\bar{f}^{-\frac{\tau}{2}}u^{\frac{p+1}{2}}=0,&\text{on}\:\partial M,
\end{cases}
\end{align}
where $f$ and $\bar{f}$ are smooth positive functions on $M$ and $\partial M$ respectively, and $\tau=(n+2)/(n-2)-p$.
For most of the proofs we will be focusing on the critical case $p=(n+2)/(n-2)$, which is particularly challenging, so that the equations \eqref{main:eq:conf}  become
\begin{align}\label{main:eq:crit}
\begin{cases}
L_{g}u+n(n-2)u^{\frac{n+2}{n-2}}=0,&\text{in}\:M,
\\
B_{g}u+cu^{\frac{n}{n-2}}=0,&\text{on}\:\partial M.
\end{cases}
\end{align}

The compactness of the full set of solutions to \eqref{main:eq:crit} was proved for locally conformally flat manifolds with umbilical boundary in \cite{han-li1}. We refer the reader to \cite{almaraz-queiroz-wang, almaraz3,  ahmedou-felli1, ahmedou-felli2, ghimenti-micheletti1, ghimenti-micheletti2, kim-musso-wei}
for compactness results in the case $K=0$ and to \cite{disconzi-khuri} for the case $c=0$.

Our main result establishes compactness in full generality in dimension three for the geometric equation \eqref{main:eq:crit} and is analogous to the result in \cite{almaraz-queiroz-wang} where the case $K=0$ was studied. 
\begin{theorem}\label{compactness:thm}
	Let $(M,g)$ be a Riemannian three-manifold with boundary $\d M$.  Suppose that $M$ is of positive type and it is not conformally equivalent to the round hemisphere. 
	Then there exists $C(M,g,c)>0$ such that for any solution $u>0$ of (\ref{main:eq:crit})
	we have
	$$C^{-1}\leq u\leq C\:\:\:\:\: \text{and}\:\:\:\:\:\|u\|_{C^{2,\a}(M)}\leq C\,,$$
	for some $0<\a<1$.
\end{theorem}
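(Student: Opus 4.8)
The plan is to argue by contradiction via a blow-up analysis, following the strategy developed for the scalar-flat case ($K=0$) in \cite{almaraz-queiroz-wang}. Suppose the bound fails. Then, since the $C^{2,\a}$ estimate and the lower bound will follow from the $L^\infty$ bound by elliptic theory, it suffices to derive a contradiction from the existence of a sequence of positive solutions $u_i$ of \eqref{main:eq:crit} with $\max_M u_i\to\infty$. I would first localize the blow-up: after passing to a subsequence, the blow-up set (limits of points $x_i$ where $u_i(x_i)\to\infty$) is nonempty, and a standard rescaling argument shows that near each blow-up point a suitable dilation of $u_i$ converges to a \emph{standard bubble}. Two regimes must be separated: interior blow-up, whose rescaled limit is the Aubin--Talenti sphere solution on $\R^n$, and boundary blow-up, whose limit is the corresponding solution of the half-space problem with boundary mean curvature matching the sign of $c$. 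The central structural task is to show that every blow-up point is \emph{isolated} and \emph{isolated simple}, so that bubbles neither accumulate nor carry more than one concentration scale.

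To establish isolatedness and simplicity I would combine a Harnack inequality away from the blow-up set, the energy quantization coming from the classification of the limiting bubbles, and, crucially, local Pohozaev identities. The interior Pohozaev identity governs interior blow-up, while the boundary version --- obtained by integrating the equation against the vector field $x\cdot\nabla u+\frac{n-2}{2}u$ over a half-ball and using the boundary condition $B_gu+cu^{n/(n-2)}=0$ --- produces the boundary term carrying $c$ together with the geometric data. Positivity of type (so that $\l_1(L_g)>0$) enters here to exclude degenerate scenarios and blow-up at points where the limiting profile would have the wrong sign. Because $n=3$ the dimension is low, so the remainders in matching the bubble to the rescaled solution are delicate; \emph{controlling these error terms and proving that the blow-up is simple is the most technical part of the argument.}

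The heart of the contradiction is the sign of the geometric quantity extracted from the Pohozaev identity at an isolated simple blow-up point. In dimension three the Weyl tensor vanishes identically, so the leading obstruction is not curvature but the \emph{mass} of the relevant Green's function: after rescaling, $u_i$ behaves like $G+(\text{lower order})$, where $G$ is the Green's function of $L_g$ with the Robin-type boundary condition $B_g$, and the Pohozaev identity forces the constant term in the expansion of $G$ (the mass) to be non-positive. The hard part will be to invoke a \emph{positive mass theorem} to conclude this mass is strictly positive, contradicting the Pohozaev sign. For an interior blow-up point one conformally inverts at the point using $G$ to produce an asymptotically flat manifold and applies the positive mass theorem; for a boundary blow-up point one uses the positive mass theorem for asymptotically flat manifolds with noncompact boundary. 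In both cases the rigidity (equality) statement identifies the borderline geometry as conformally the round hemisphere --- precisely the case excluded by hypothesis --- so the mass is strictly positive and the contradiction follows. Hence no blow-up occurs and $u\le C$.

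Finally I would upgrade the upper bound to the full estimate. The lower bound $u\ge C^{-1}$ follows from the Harnack inequality together with positivity of type, since $Q_c(M)>0$ prevents solutions from collapsing to zero. With $C^{-1}\le u\le C$ in hand, the equations \eqref{main:eq:crit} become uniformly elliptic with bounded right-hand side, and Schauder estimates up to the boundary, respecting the oblique boundary operator $B_g$, yield the uniform $C^{2,\a}$ bound, completing the proof.
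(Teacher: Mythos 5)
Your overall skeleton is the paper's: argue by contradiction, reduce to a finite set of isolated simple blow-up points, prove a Pohozaev sign restriction at such a point, conformally blow up by the Green's function of $(L_g,B_g)$, and contradict the positive mass theorem for asymptotically flat manifolds with non-compact boundary, with the rigidity case excluded by the hemisphere hypothesis. However, two steps that you defer to "technique" are exactly where the paper's new content lies, and your plan as written would fail there. First, your dichotomy (interior blow-up $\to$ Aubin--Talenti bubble, boundary blow-up $\to$ half-space bubble) misses the regime that the paper singles out as the main new difficulty of the case $K>0$: sequences of \emph{interior} local maxima $x_i$ converging to a \emph{boundary} point with $d_{g_i}(x_i,\partial M)\,u_i(x_i)^{(p_i-1)/2}$ bounded. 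For these, the correct model is still the half-space bubble $U_T$, centred in Fermi coordinates at the nearest boundary point $\bar x_i$, and its centre need not coincide with the local maximum $x_i$. The paper handles this in Lemma \ref{lem:bddTi} (showing $T_i\to T$) and by imposing the normalization \eqref{hip:phi} of the correction term at $\xi_i=(0,\dots,0,T_i)$ rather than at the origin; without this, the classification argument via Lemma \ref{classifLinear} (vanishing of $c_1,c_2,c_3$) breaks down.

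Second, you treat the matching of the bubble to the rescaled solution as an error-control issue, but on manifolds with boundary --- unlike closed three-manifolds --- the zeroth-order approximation $v_i\approx U_T$ is \emph{not} sufficient even in dimension three. The paper must construct the linear correction $\phi_i$ solving \eqref{linear:8} (Proposition \ref{Linearized}, which uses $h_{g_i}=0$ and the eigenfunction analysis on $\mathbb S^3_T$) in order to get the sharp estimate $|\nabla^k(v_i-U_T-\phi_i)|\le C\epsilon_i(1+|y|)^{-k}$ of Proposition \ref{estim:blowup:compl}; it is this estimate that makes the interior Pohozaev term $O(\epsilon_i r)$ and yields the sign restriction of Theorem \ref{cond:sinal}. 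Your proposal contains no substitute for this ingredient. A smaller inaccuracy: you describe the mass as "the constant term in the expansion of $G$," but here the Green's function is only controlled to $O(|\log|z||)$ near the singularity, so no such constant term is available; the paper instead captures the mass by the Brendle--Chen flux integral $I(x_0,\rho)$ (Proposition \ref{propo:I:mass}) and relates it to the Pohozaev quantity $P'$ through Proposition \ref{I:P'}. So the strategy is right, but the load-bearing technical ideas --- the modified correction term, the interior-to-boundary blow-up regime, and the flux-integral formulation of the mass --- are missing.
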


We observe that the round hemisphere is conformally equivalent to any spherical cap, which has a family of blowing-up solutions (see Section \ref{sec:pre}). As a result, the hypothesis of $M$ not being conformally equivalent to the hemisphere is not restrictive.

Once we have proved Theorem \ref{compactness:thm}, the extension of that to more general exponents $1<p\leq \frac{n+2}{n-2}$ follows from standard methods (see \cite{almaraz-queiroz-wang, han-li1, li-zhu2, marques}). Hence, we state the following theorem whose proof will be omitted:

\begin{theorem}\label{compactness:thm:general}
Let $(M,g)$ be a Riemannian three-manifold with boundary $\d M$.  Suppose that $M$ is of positive type and is not conformally equivalent to the round hemisphere. 
Then, given $\bar c>0$ and a small $\gamma_0>0$, there exists $C(M,g,\bar c, \gamma_0)>0$ such that for any  $|c|\leq \bar c$, $p\in\left[1+\gamma_0,\frac{n+2}{n-2}\right]$, and any solution $u>0$ of (\ref{main:eq:conf})
we have
$$C^{-1}\leq u\leq C\:\:\:\:\: \text{and}\:\:\:\:\:\|u\|_{C^{2,\a}(M)}\leq C\,,$$
for some $0<\a<1$.
\end{theorem}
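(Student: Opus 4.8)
The plan is to argue by contradiction through one blow-up analysis valid across the whole range of exponents, splitting at the end into a strictly subcritical case, dispatched by Liouville-type nonexistence, and a critical case, reduced to Theorem \ref{compactness:thm}. Suppose the conclusion fails; then there are exponents $p_i\in[1+\gamma_0,(n+2)/(n-2)]$ and positive solutions $u_i$ of \eqref{main:eq:conf} with $p=p_i$ for which $\max_M u_i\to\infty$. Since an $L^\infty$ bound yields the full $C^{2,\a}$ bound and the positive lower bound through elliptic estimates up to the boundary and the Harnack inequality (using that $M$ is of positive type), exactly as in the critical case, it suffices to contradict $\max_M u_i\to\infty$. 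Passing to a subsequence we may assume $p_i\to p_\infty\in[1+\gamma_0,(n+2)/(n-2)]$ and put $\tau_i=(n+2)/(n-2)-p_i$. The factors $f^{-\tau_i}$ and $\bar f^{-\tau_i/2}$ are built into \eqref{main:eq:conf} precisely to preserve a form of conformal covariance for subcritical $p_i$, so that the conformal Fermi coordinate normalizations and the localization developed in the earlier sections apply verbatim, the curvature expansions only acquiring harmless factors $f^{-\tau_i}=1+\OO(\tau_i)$.

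First I would treat the strictly subcritical case $p_\infty<(n+2)/(n-2)$. Let $x_i$ be a maximum point of $u_i$, set $M_i=u_i(x_i)\to\infty$ and $\mu_i=M_i^{-(p_i-1)/2}$; since $p_i-1\ge\gamma_0>0$ one has $\mu_i\to0$. Consider the rescalings $v_i(y)=M_i^{-1}u_i(x_i+\mu_i y)$ read in geodesic normal coordinates, or in Fermi coordinates when $\dist(x_i,\d M)/\mu_i$ stays bounded. Using $0<v_i\le1$, the rescaled metric converging to the Euclidean one, and interior/boundary elliptic estimates, $v_i\to v_\infty$ in $C^2_{\mathrm{loc}}$ with $v_\infty(0)=1$, where $v_\infty>0$ solves either $\D v_\infty+n(n-2)v_\infty^{p_\infty}=0$ on $\R^n$, when $\dist(x_i,\d M)/\mu_i\to\infty$, or the same equation on $\Rn$ together with $\d v_\infty/\d y_n+c\,v_\infty^{(p_\infty+1)/2}=0$ on $\d\Rn$. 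Since $p_\infty$ is subcritical, both the bulk exponent $p_\infty$ and the boundary exponent $(p_\infty+1)/2$ lie below their Sobolev-critical values; the Gidas--Spruck theorem excludes the first alternative and its half-space counterpart (for either sign of $c$) excludes the second. Hence $v_\infty\equiv0$, contradicting $v_\infty(0)=1$.

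It remains to treat the critical case $p_\infty=(n+2)/(n-2)$, where $\tau_i\to0$. Now the limiting equation is critical and admits the standard bubble solutions, so Liouville nonexistence is unavailable and the full machinery is required. I would rerun the blow-up analysis of Theorem \ref{compactness:thm} keeping $p=p_i$ throughout: the same point-selection argument, the Harnack inequality, and the analysis of isolated and isolated simple blow-up points go through, and the same local Pohozaev identity applies, now carrying one extra term proportional to $\tau_i\int u_i^{\,p_i+1}$. Because $\tau_i\to0$ and the blow-up profile is controlled uniformly, this term is of lower order and does not affect the sign of the leading term of the identity. In dimension three that sign is furnished by the positive mass theorem, through the expansion of the local Green's function of $L_g$, exactly as in the critical proof, so the strict-sign contradiction persists for $\tau_i$ small.

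The main obstacle is exactly this last uniformity: one must verify that every constant entering the isolated and isolated simple blow-up analysis, and above all the sign of the geometric (positive mass) term in the local Pohozaev identity, depend continuously on the exponent and do not degenerate as $\tau_i\to0$ --- equivalently, that the estimates of the earlier sections hold uniformly for $p$ in a one-sided neighborhood of $(n+2)/(n-2)$. Granting this, the contradiction of Theorem \ref{compactness:thm} carries over to all large $p_i$, which together with the Liouville argument for $p_\infty<(n+2)/(n-2)$ contradicts the assumed blow-up and proves the theorem. By comparison the strictly subcritical regime is routine, resting only on classical Liouville theorems rather than on the positive mass theorem.
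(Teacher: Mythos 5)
Your proposal is essentially the paper's own (omitted) argument: the paper explicitly states that Theorem \ref{compactness:thm:general} follows from Theorem \ref{compactness:thm} by standard methods, namely the same two-case scheme you describe --- Liouville-type nonexistence in the strictly subcritical regime (already embedded in Proposition \ref{conj:isolados}, which forces $(n+2)/(n-2)-p<\b$ whenever $\max_M u$ is large) and the isolated-simple-blow-up/Pohozaev/positive-mass machinery run uniformly in $p_i\to(n+2)/(n-2)$ in the near-critical regime. The uniformity you flag as the main obstacle is exactly what the cited references (Han--Li, Li--Zhu, Marques, Almaraz--Queiroz--Wang) verify, including the standard bound $u_i(x_i)^{\tau_i}\leq C$ at isolated simple blow-up points and the favorable sign of the term $\left(\frac{n}{p+1}-\frac{n-2}{2}\right)\int K f^{-\tau}u^{p+1}$, which make the extra Pohozaev terms harmless.
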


It is well known that a priori estimates play important roles in establishing the existence of solutions. As a corollary of Theorem \ref{compactness:thm:general}, one can obtain existence of solution to equations \eqref{main:equation} through topological degree theory (see for example the introduction section of \cite{han-li1}). This provides an an alternate proof in dimension three for the Han-Li conjecture which was proved in this dimension in \cite{chen-ruan-sun} using the mountain pass lemma.

Theorems \ref{compactness:thm} and \ref{compactness:thm:general}, as well as several other results on compactness of the full set of solutions to the Yamabe equations on manifolds with boundary, are highly inspired by the case of closed manifolds. In this case, the compactness for the classical Yamabe equation was originally conjectured by Schoen who proposed an approach based on a Pohozaev identity which relates interior integral terms with boundary integral terms in a neighbourhood of a blow-up point. This method was successfully implemented by Khuri-Marques-Schoen \cite{khuri-marques-schoen} to prove the compactness conjecture of Yamabe problems on manifolds without boundary in dimension $3\leq n\leq 24$ while counterexamples in dimension $n\geq 25$ were provided by Brendle \cite{brendle2} and Brendle-Marques \cite{brendle-marques} (we refer the reader to \cite{berti-malchiodi} for non-smooth counterexamples).

A crucial step in the argument in \cite{khuri-marques-schoen} is, after assuming by contradiction the existence of a blow-up sequence, to express this sequence as a canonical Euclidean bubble plus a correction term near a blow-up point. This correction term is defined as a solution to a linear problem involving coefficients of the metric expansion at the blow-up point and allows the authors to identify a quadratic form in those coefficients. The correction term not only makes the quadratic form positive definite but also gives a sharp approximation for the blow-up sequence. 
On the other hand, the Positive Mass Theorem applied to the boundary integral terms of the Pohozaev identity gives a sign that contradicts the positivity of the quadratic form which comes from interior integral terms. 

While in the case of closed manifolds the correction term is unnecessary in low dimensions (see \cite{druet2, li-zhang, li-zhang2, li-zhu2, marques}), this situation is different in the case of manifolds with boundary (see \cite{almaraz-queiroz-wang, kim-musso-wei}). To be more concrete, Proposition 5.3 in \cite{almaraz-queiroz-wang} only provides order one approximation near a blow-up point. Although this is enough in the dimension three, which is the dimension addressed in \cite{almaraz-queiroz-wang}, in the case of closed manifolds such a low order approximation can be easily obtained without relying on the correction term used in \cite{almaraz-queiroz-wang} (see \cite{li-zhu2, marques}). 

This is one of the reasons why the compactness problem on manifolds with boundary is technically more involved than its counterpart on closed manifolds. Another reason is that the linear problem determining the correction term cannot be solved in the class of polynomials in the coefficients of the metric. However, we show that in dimension three such an explicit solution is not necessary as we are able to include the part coming from the correction term in a high order term. In that dimension this high order term also includes the interior integral term coming from the Pohozaev identity.
 This is where the dimensional hypothesis is used in our argument.

The case $K>0$ in the equations \eqref{main:equation} imposes some extra difficulties when compared to the case $K=0$, studied in \cite{almaraz-queiroz-wang}. While in \cite{almaraz-queiroz-wang} the maximum points for blow-up sequences are always sitting on the boundary, in the case $K>0$ we have to handle also interior local maxima. Although interior blow-up points accumulating in the interior of the manifold are handled as in the case of closed manifolds, and the case of boundary maximum points is similar to the one in \cite{almaraz-queiroz-wang}, the case of interior local maximum points converging to the boundary has to be more carefully addressed. The reason is that the canonical bubbles that best approximate the solution near to these points may have slightly different centres than the local maximum points for the equations solutions. This problem is overcame by some modification in the linear correction term (see equations \eqref{hip:phi} and compare with \cite[Equations (5.4)]{almaraz-queiroz-wang}).

Similar to the case of closed manifolds, in the case of the equations \eqref{main:eq:crit} we also expect compactness to hold up to certain dimension and to fail in greater dimensions. Chen-Wu \cite{chen-wu} constructed blow-up examples for the equations \eqref{main:eq:crit} in dimension $n\geq 62$ (in the case $c=0$ such examples were provided in dimension $n\geq 25$ by Disconzi-Khuri in \cite{disconzi-khuri}). While compactness for these equations was proved in all dimensions for locally conformally flat manifolds with umbilical boundary by Han-Li \cite{han-li1}, the problem for general manifolds remains open and, to the best of our knowledge, Theorems \ref{compactness:thm} and \ref{compactness:thm:general} are the first general results for this setting.  

\begin{remark}\label{rmk:neg}
Let us briefly discuss the compactness when $K<0$ in equations \eqref{main:equation}. In the case $c<n-2$ it is proven in \cite{han-li1} that compactness holds in any dimension $n\geq 3$ with no additional hypothesis. The argument relies on the fact that there is no solution for the corresponding Euclidean equation on the half-space. This rules out the blow-up phenomenon because a standard rescaling argument for blow-up sequences leads to this Euclidean equation in the limit. However, the same argument would fail for the case $c\geq n-2$ where hyperbolic geodesic balls and horopherical domains provide limit solutions in the Euclidean half-space (see \cite{chipot-fila-shafrir} and also \cite{escobar1, li-zhu, terracini}). We will investigate this case in an upcoming work.
\end{remark}

This paper is organized as follows. In Section \ref{sec:pre} we present some preliminary results about the standard solution on the Euclidean half-space, Fermi coordinates, the Pohozaev identity and the Positive Mass Theorem for manifolds asymptotically modelled on the Euclidean half-space. The definition of isolated and isolated simple blow-up points and some additional properties are collected in Section \ref{sec:isolated:simple}, while the refined blow-up estimates are presented in Section \ref{sec:blowup:estim}. In Section \ref{sec:sign:restr} we prove a Pohozaev sign restriction using the estimates obtained in Section \ref{sec:blowup:estim} and relate it to the mass term using a flux integral. Finally we give a proof of Theorem \ref{compactness:thm} in Section \ref{sec:pf:thm}.


\section{Preliminaries}\label{sec:pre}

\begin{notation}
Throughout this work we will make use of the index notation for tensors, commas denoting covariant differentiation. We will adopt the summation convention whenever confusion is not possible. When dealing with coordinates on manifolds with boundary, we will use indices $1\leq i,j,k,l\leq n-1$ and $1\leq a,b,c,d\leq n$. 

We will denote by $g$ the Riemannian metric and the induced metric on $\d M$ will be denoted by $\bar{g}$. We will denote by by $\Delta_g$ the Laplacian-Beltrami operator and by $R_g$ we will denote the scalar curvature. The second fundamental form of the boundary will be denoted by $\pi_{kl}$ and the mean curvature,  $\frac{1}{n-1}tr (\pi_{kl})$, by $h_g$.

By $\Rn$ we will denote the half-space $\{z=(z_1,...,z_n)\in \R^n;\:z_n\geq 0\}$. If $z\in\Rn$ we set $\bar{z}=(z_1,...,z_{n-1})\in\R^{n-1}\cong \d\Rn$. We define 

$$B^+_{\delta}(0)=\{z\in\Rn\,;\:|z|<\delta \}, \qquad  S^+_{\delta}(0)=\{z\in\Rn\,;\:|z|=\delta \},$$
and $$D_{\delta}(0)=B^+_{\delta}(0)\cap \d\Rn=\{z\in\d\Rn\,;\:|z|<\delta \}.$$
Thus, $\d B^+_{\delta}(0)=D_{\delta}(0)\cup S^+_{\delta}(0)$.
We also denote $B^+_{\delta}=B^+_{\delta}(0)$,  $S^+_{\delta}=S^+_{\delta}(0)$  and $D_{\delta}=D_{\delta}(0)$ for short. 
Similarly, we define on the manifold $M$ the geodesic balls $B_\delta(x_i)$ and the geodesic spheres $S_\delta(x_i)=\partial B_\delta(x_i)\cap (M\backslash \partial M)$ for any $x_i\in M$.
\end{notation}

We begin this section with the simplest example of solution to the Yamabe-type problem we are concerned, which are the spherical caps in $\R^{n+1}$ endowed with the canonical metric. Let $F$ be the stereographic projection from the unit sphere in $\R^{n+1}$ centred at $(0,..., 0, T, 0)$, $T\in \R$, onto the hyperplane $\R^n=\{\xi_{n+1}=0\}$ and let $(\xi_1,...,\xi_{n+1})$ be the coordinates taking $(0,..,0, T, 0)$ as its origin and $(y_1,...,y_n)$ be the coordinates of $\R^n$. 
Then

$$\xi_j=\frac{2y_j}{1+|\bar y|^2+(y_n-T)^2}\,,
\qquad j=1,...,n-1\,,
$$
$$
\xi_n=\frac{2(y_n-T)}{1+|\bar y|^2+(y_n-T)^2}\,,
\qquad
\xi_{n+1}=\frac{|\bar y|^2-1+(y_n-T)^2}{1+|\bar y|^2+(y_n-T)^2}\,.
$$
Let $\Sigma$ be the cap $F^{-1}(\R_+^n)$ on $\mathbb S^n$ and $\Gamma_T(\xi_1,...,\xi_{n+1})=(\tilde\xi_1,...,\tilde\xi_{n+1})$ be the rotation given by 
$$
\tilde\xi_j=\xi_j\,,\qquad j=1,...,n-1,
$$
$$
\tilde\xi_n=\frac{1}{\sqrt{1+T^2}}(-T\xi_n-\xi_{n+1}),
$$
$$
\tilde\xi_{n+1}=\frac{1}{\sqrt{1+T^2}}(\xi_n-T\xi_{n+1}).
$$
A direct calculation shows that $\mathbb{S}^n_T:=\Gamma_T(\Sigma)$ is given by
$$
\mathbb S_T^n=\big\{(\tilde\xi_1,...,\tilde\xi_{n+1})\in\R^{n+1}\:|\:\tilde\xi_{n+1}\geq \frac{-T}{\sqrt{1+T^2}}\big\}.
$$
Then $\mathbb{S}^n_T$ is conformally equivalent to the Euclidean half space $\R_+^n$ through the map
$$
F_T:=F\circ\Gamma_T^{-1}:\mathbb S_T^n\to \mathbb R^n_+.
$$

A direct calculation shows that the induced metric on $\R_+^n$ is $4U_T^{\frac{4}{n-2}}g_{eucl}$ where $g_{eucl}$ is the Euclidean metric and   
$$
U_T(y)=\(\frac{1}{1+\sum_{j=1}^{n-1}y_j^2+(y_n-T)^2}\)^{\frac{n-2}{2}}.
$$ 
Here, the constant $-T$ is the mean curvature on the boundary of the cap. 
The function $U_T$ satisfies
\begin{equation}
\label{eq:U}
\begin{cases}
\Delta U_T + n(n-2)U_T^{\frac{n+2}{n-2}} = 0\,,&\text{in}\:\mathbb{R}_+^n\,,\\
\displaystyle\frac{\partial U_T}{\partial y_n}+cU_T^{\frac{n}{n-2}}=0\,,&\text{on}\:\partial\mathbb{R}_+^n\,,
\end{cases}
\end{equation}
where $c=-(n-2)T$.
Since the equations (\ref{eq:U}) are invariant by horizontal translations and scaling with respect to the origin, we obtain the following family of solutions of (\ref{eq:U}):
\begin{equation}\label{fam:U}
\lambda^{\frac{2-n}{2}}U_{T}(\lambda^{-1}(y_1-z_1, ..., y_{n-1}-z_{n-1}, y_n))=\left(\frac{\l}{\lambda^2+\sum_{j=1}^{n-1}(y_j-z_j)^2+(y_n-\l T)^2}\right)^{\frac{n-2}{2}}\,,
\end{equation}
where $\l>0$ and $(z_1,...,z_{n-1})\in \R^{n-1}$.

In fact, the converse statement is also true: by a Liouville-type theorem in \cite{li-zhu} (see also \cite{chipot-fila-shafrir, escobar1}), any non-negative solution to the equations (\ref{eq:U}) is either of the form (\ref{fam:U}) or is identically zero.


The existence of the family of solutions (\ref{fam:U}) has two important consequences. Firstly, we see that the set of solutions to the equations (\ref{eq:U}) is non-compact. In particular, the set of solutions of (\ref{main:eq:crit}) with $p=(n+2)/(n-2)$ is non-compact when $M$ is conformally equivalent to the round hemisphere. Secondly, the functions $J_j:=\d U_T/\d y_j$, for $j=1,...,n-1$, and $J_n:=\frac{n-2}{2}U_T+\sum_{b=1}^{n}y^b\d U_T/\d y^b$, are solutions to the following homogeneous linear problem:
\ba
\begin{cases}\label{linear:homog}
\Delta\psi+n(n+2)U_T^{\frac{4}{n-2}}\psi=0\,,&\text{in}\:\Rn\,,
\\
\displaystyle\frac{\d\psi}{\d y_n}-nTU_T^{\frac{2}{n-2}}\psi=0\,,&\text{on}\:\d\Rn\,.
\end{cases}
\end{align}

As a matter of fact, a converse statement is also true as stated in the next lemma that will be used later in Section \ref{sec:blowup:estim}.

\begin{lemma}
\label{classifLinear}
Suppose $\psi$ is a solution of \eqref{linear:homog}.
If $\psi(y)=O((1+|y|)^{-\alpha})$ for some $\alpha>0$, then there exist constants $c_1,...,c_{n}$ such that
$$
\psi (y)=\sum_{j=1}^{n-1} c_j\frac{\d U_T}{\d y_j}+c_n\Big(\frac{n-2}{2}U_T+\sum_{b=1}^ny^b\frac{\d U_T}{\d y^b}\Big)\,.
$$
\end{lemma}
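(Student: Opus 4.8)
The plan is to use the conformal equivalence between $(\mathbb{R}^n_+,4U_T^{4/(n-2)}g_{eucl})$ and the spherical cap $\mathbb{S}^n_T$, provided by the map $F_T$, to recast \eqref{linear:homog} as a linear eigenvalue problem on a geodesic ball of the round sphere, and then to classify its solutions by separation of variables. Concretely, writing $g_0$ for the round metric on $\mathbb{S}^n_T$ obtained from $4U_T^{4/(n-2)}g_{eucl}$, I would set $\psi(y)=\kappa\,U_T(y)\,\Phi(F_T^{-1}(y))$ for the appropriate constant $\kappa$, so that $\Phi$ is a function on the cap. By the conformal covariance of the conformal Laplacian and of the conformal boundary operator, the interior equation in \eqref{linear:homog} transforms into $-\Delta_{g_0}\Phi=\mu_0\Phi$, where a direct computation using the scalar curvature of $g_0$ shows that $\mu_0$ is exactly the first nonzero eigenvalue of $-\Delta_{g_0}$ on the round $\mathbb{S}^n$; since the cap has constant boundary mean curvature, the boundary condition becomes a Robin condition $\partial_\nu\Phi+\beta\Phi=0$ on $\partial\mathbb{S}^n_T$ with $\beta$ constant. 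The single point $p_\infty=F_T(\infty)$ lies on $\partial\mathbb{S}^n_T$, and the decay hypothesis $\psi=O((1+|y|)^{-\alpha})$ guarantees that near $p_\infty$ the function $\Phi$ grows strictly slower than the fundamental solution of the Laplacian (its growth is $O(d^{\alpha-(n-2)})$ in the distance $d$ to $p_\infty$, with $\alpha>0$); the isolated boundary singularity is therefore removable, and by elliptic regularity $\Phi$ extends to a smooth solution on the closed cap $\overline{\mathbb{S}^n_T}$.

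Next I would exploit the rotational symmetry of the cap about its axis. Writing points of $\mathbb{S}^n\subset\mathbb{R}^{n+1}$ in geodesic polar coordinates $(\theta,\omega)$ centred at the pole of the cap, with $\tilde\xi_{n+1}=\cos\theta$ and $(\tilde\xi_1,\dots,\tilde\xi_n)=(\sin\theta)\,\omega$ for $\omega\in\mathbb{S}^{n-1}$, the cap is $\{0\le\theta\le\theta_0\}$ with $\cos\theta_0=-T/\sqrt{1+T^2}$, and I would expand $\Phi(\theta,\omega)=\sum_{k\ge0}a_k(\theta)Y_k(\omega)$ in spherical harmonics $Y_k$ of degree $k$ on $\mathbb{S}^{n-1}$. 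Each mode then solves a Gegenbauer-type ODE on $(0,\theta_0)$, regularity at $\theta=0$ fixing $a_k$ up to scale, and the Robin condition at $\theta_0$ becoming a single linear constraint. The known solutions $\partial U_T/\partial y_j$, $j=1,\dots,n-1$, correspond under the transformation to $\tilde\xi_1,\dots,\tilde\xi_{n-1}$, with radial profile $a_1(\theta)=\sin\theta$; this forces $\beta$ through $\cos\theta_0+\beta\sin\theta_0=0$, so the entire $k=1$ sector, spanned by $\tilde\xi_1,\dots,\tilde\xi_n$, solves the problem. These $n$ functions are precisely the images of $J_1,\dots,J_n$, the remaining coordinate $\tilde\xi_{n+1}$ (the vertical translation) being the one excluded by the boundary condition.

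It then remains to show that no other mode contributes. For $k=0$ the regular solution is $a_0(\theta)=\cos\theta$, and the explicit computation $a_0'(\theta_0)+\beta a_0(\theta_0)=-1/\sin\theta_0\neq0$ forces $a_0\equiv0$. For $k\ge2$ I would argue spectrally: in each fixed sector the potential $k(k+n-2)/\sin^2\theta$ in the ODE is strictly increasing in $k$, so the lowest Robin eigenvalue $\mu_{k,0}$ of the sector is strictly increasing in $k$; since $a_1=\sin\theta>0$ on $(0,\theta_0)$, it is the ground state of the $k=1$ sector, whence $\mu_{1,0}=\mu_0$ and therefore $\mu_{k,0}>\mu_0$ for every $k\ge2$. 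Consequently $\mu_0$ is not an eigenvalue in any sector $k\ge2$, which forces $a_k\equiv0$ there as well. Thus $\Phi$ lies in the $k=1$ sector and equals $\sum_{j=1}^n c_j\tilde\xi_j$, which transforms back to the claimed expression for $\psi$.

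The main obstacle I expect is twofold: first, the conformal bookkeeping needed to make the reduction rigorous, namely identifying $\mu_0$ with the first nonzero eigenvalue, verifying that the weak decay of $\psi$ suffices to remove the boundary singularity of $\Phi$ at $p_\infty$, and checking that the transformed boundary operator yields a single constant-coefficient Robin condition; and second, the exclusion of the modes $k\ge2$. Once the problem is recast as a family of one-dimensional Sturm--Liouville problems, the monotonicity of the lowest eigenvalue in $k$ disposes of $k\ge2$ cleanly, and the cases $k=0$ and $k=1$ are handled by explicit computation.
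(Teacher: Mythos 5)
Your proposal is correct, and its first half coincides with the paper's own argument: both use the conformal covariance of $L_g$ and $B_g$ under the map $F_T$ to turn \eqref{linear:homog} into the eigenvalue problem $\Delta_{\mathbb S^n}\bar\psi+n\bar\psi=0$ on the cap $\mathbb S^n_T$ with Robin condition $\partial\bar\psi/\partial\eta-T\bar\psi=0$, and both use the decay $\psi=O((1+|y|)^{-\alpha})$ to remove the isolated boundary singularity at the image of the point at infinity (the paper quotes \cite[Lemma 2.7]{almaraz5} for this step; your growth comparison $\Phi=O(d^{\alpha-(n-2)})=o(d^{2-n})$ is exactly the hypothesis that lemma requires). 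Where you genuinely diverge is the classification of solutions of the regularized problem \eqref{linear:3'}: the paper simply invokes Lemma \ref{eigenvalues}, whose proof is a citation of \cite[Proposition 3.2]{han-li2}, whereas you prove the classification from scratch by expanding in spherical harmonics about the axis of the cap, solving the $k=0$ and $k=1$ Gegenbauer ODEs explicitly (the regular $k=0$ solution $\cos\theta$ fails the Robin condition since $a_0'(\theta_0)+\beta a_0(\theta_0)=-1/\sin\theta_0\neq 0$, while $\sin\theta$ satisfies it), and excluding every sector $k\ge 2$ by a Sturm--Liouville argument: $\sin\theta>0$ is the ground state of the $k=1$ sector, and strict monotonicity in $k$ of the potential $k(k+n-2)/\sin^2\theta$ forces the lowest Robin eigenvalue of each sector $k\ge 2$ to exceed $n$, so those sectors cannot contribute. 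This spectral step is sound --- the constant-sign criterion does identify the ground state, and the min--max principle gives the strict monotonicity --- and it amounts to an independent proof of the Han--Li result the paper cites. What your route buys is self-containedness and an explicit explanation of why precisely the $n$ functions $\tilde\xi_1,\dots,\tilde\xi_n$ (and not $\tilde\xi_{n+1}$ or higher modes) survive; what the paper's route buys is brevity, reducing the lemma to two citations plus conformal bookkeeping.
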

\bp
The proof is similar to \cite[Lemma 2.1]{almaraz3}. For the reader's convenience, we include some details here. Recall that the operators $L_g$ and $B_g$ satisfy the following conformal laws
$$
L_{\xi^{\frac{4}{n-2}}g}(\xi^{-1}u)=\xi^{-\frac{n+2}{n-2}}L_gu
\qquad\text{and}\qquad
B_{\xi^{\frac{4}{n-2}}g}(\xi^{-1}u)=\xi^{-\frac{n}{n-2}}B_gu,
$$
for any smooth functions $\xi>0$ and $u$.
Hence, the equations \eqref{linear:homog} are equivalent to 
\begin{align*}
\begin{cases}
\Delta_{\mathbb S^n} \bar{\psi} + n\bar \psi= 0\,,&\text{in}\:\:\mathbb S_T^n\backslash\{(0,...,0,1)\},\\
\displaystyle\frac{\partial \bar{\psi}}{\partial \eta}-T\bar{\psi}=0\,,&\text{on}\:\:\partial\mathbb  S_T^n\backslash\{(0,...,0,1)\},
\end{cases}
\end{align*}
with $\bar\psi=(2^{\frac{2-n}{2}}U_T^{-1}\psi)\circ F_T$. The hypothesis $\psi(y)=O((1+|y|)^{-\alpha})$ implies that the singularity at $(0,...,0,1)$ is removable (see \cite[Lemma 2.7]{almaraz5}) so that $\bar\psi$ is a solution to  
\begin{align*}
\begin{cases}
\Delta_{\mathbb S^n} \bar{\psi} + n\bar \psi= 0\,,&\text{in}\:\:\mathbb S_T^n,\\
\displaystyle\frac{\partial \bar{\psi}}{\partial \eta}-T\bar{\psi}=0\,,&\text{on}\:\:\partial\mathbb  S_T^n.
\end{cases}
\end{align*}
The result then follows from Lemma \ref{eigenvalues} below.
\ep
\begin{lemma}\label{eigenvalues}
The eigenfunctions $\bar \psi$ corresponding to the eigenvalue $\mu=-T$ of the problem
\begin{equation}\label{linear:3'}
\begin{cases}
\Delta_{\mathbb S^n} \bar{\psi} + n\bar \psi= 0\,,&\text{in}\:\:\mathbb S_T^n,\\
\displaystyle\frac{\partial \bar{\psi}}{\partial \eta}+\mu\bar{\psi}=0\,,&\text{on}\:\:\partial\mathbb  S_T^n,
\end{cases}
\end{equation}
are the coordinate functions $\tilde\xi_{1}, ..., \tilde\xi_{n}$ restricted to $\mathbb S_T^n$. 
Here, $\Delta_{\mathbb S^n}$ denotes the Laplacian in $\mathbb S^n$ and $\eta$ is the inward unit normal vector to $\partial\mathbb S_T^n$.   
Moreover, $\tilde\xi_{j}\circ F_T^{-1}=\frac{-2}{n-2}U_T^{-1}J_j$, for $j=1,...,n-1$, and $\tilde\xi_{n}\circ F_T^{-1}=\frac{2}{(n-2)}\frac{1}{\sqrt{1+T^2}}U_T^{-1}J_n$.
\end{lemma}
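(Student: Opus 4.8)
The plan is to exploit the fact that the interior equation in \eqref{linear:3'} is satisfied by \emph{all} coordinate functions, so that the whole content lies in the boundary condition, and then to establish that the list is complete by a separation of variables adapted to the rotational symmetry of the cap about the $\tilde\xi_{n+1}$-axis.

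First I would record that the restrictions to $\mathbb S^n$ of the linear functions $\tilde\xi_1,\dots,\tilde\xi_{n+1}$ are exactly the first nonconstant spherical harmonics, so that $\Delta_{\mathbb S^n}\tilde\xi_a+n\tilde\xi_a=0$ holds identically for every $a$; thus the interior equation is automatic. Writing a point of $\mathbb S^n$ as $(\sin\theta\,\omega,\cos\theta)$ with $\omega\in\mathbb S^{n-1}$ and $\theta$ the polar angle from the north pole $e_{n+1}$, the cap $\mathbb S_T^n$ is $\{\theta\le\theta_0\}$ with $\cos\theta_0=-T/\sqrt{1+T^2}$ and $\sin\theta_0=1/\sqrt{1+T^2}$, its boundary being $\{\theta=\theta_0\}$ and its inward unit normal $\eta=-\partial_\theta$. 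Computing the tangential gradient of a linear function on $\mathbb S^n$ (the projection of $e_a$ onto the tangent space) gives $\partial_\eta\tilde\xi_a=T\tilde\xi_a$ on $\partial\mathbb S_T^n$ for $a=1,\dots,n$, so each such $\tilde\xi_a$ satisfies $\partial_\eta\tilde\xi_a-T\tilde\xi_a=0$, i.e. the boundary condition with $\mu=-T$; by contrast $\partial_\eta\tilde\xi_{n+1}=\sin\theta_0\neq -\mu\,\tilde\xi_{n+1}=T\cos\theta_0$, so $\tilde\xi_{n+1}$ is excluded. This proves $\tilde\xi_1,\dots,\tilde\xi_n$ are eigenfunctions for $\mu=-T$.

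For completeness I would decompose any smooth solution $\bar\psi$ on the compact cap as $\bar\psi(\theta,\omega)=\sum_{k\ge0}\sum_m a_{k,m}(\theta)Y_{k,m}(\omega)$, where $\{Y_{k,m}\}_m$ spans the degree-$k$ harmonics on $\mathbb S^{n-1}$, with $\Delta_{\mathbb S^{n-1}}Y_{k,m}=-k(k+n-2)Y_{k,m}$. Since the north pole is an interior point, $\bar\psi$ is smooth there, so each $a_{k,m}$ is the solution, regular at $\theta=0$ and behaving like $\theta^{k}$, of the radial equation $a''+(n-1)\cot\theta\,a'+\big(n-k(k+n-2)\sin^{-2}\theta\big)a=0$; integrating the boundary condition against $Y_{k,m}$ gives $a_{k,m}'(\theta_0)=-T\,a_{k,m}(\theta_0)$. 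It therefore suffices to show that the regular solution $f_k$ satisfies $f_k'(\theta_0)/f_k(\theta_0)=-T$ only for $k=1$. For $k=1$ one has $f_1=\sin\theta$, $f_1'/f_1=\cot\theta$, and $\cot\theta_0=-T$, yielding an eigenspace of dimension equal to that of the degree-$1$ harmonics on $\mathbb S^{n-1}$, namely $n$, which matches $\tilde\xi_1,\dots,\tilde\xi_n$. The case $k=0$ (with $f_0=\cos\theta$, which may vanish inside the cap) is excluded by the direct computation of the previous paragraph. For $k\ge2$, setting $R_k=f_k'/f_k$ and using the Riccati equation $R_k'=-R_k^2-(n-1)\cot\theta\,R_k-n+k(k+n-2)\sin^{-2}\theta$, I would compare with $R_1=\cot\theta$: near $\theta=0$ one has $R_k\sim k/\theta>1/\theta\sim R_1$, and at any first crossing $R_k=R_1$ the difference of the right-hand sides equals $\big(k(k+n-2)-(n-1)\big)\sin^{-2}\theta>0$, forcing $R_k'>R_1'$ there, so $R_k>R_1$ on all of $(0,\theta_0]$. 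Hence $R_k(\theta_0)>R_1(\theta_0)=-T$, the boundary condition fails, and $a_{k,m}\equiv0$ for $k\ge2$, leaving $\bar\psi\in\Span\{\tilde\xi_1,\dots,\tilde\xi_n\}$.

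The main obstacle is exactly this last step: the Robin ratio $R_k(\theta_0)$ is not \emph{a priori} monotone in $k$, because the radial solutions can vanish inside the cap (indeed $f_0=\cos\theta$ does so for caps larger than a hemisphere), so a naive ordering of eigenvalues is invalid. The clean resolution is to compare against the explicit reference $R_1=\cot\theta$, which is finite on the whole of $(0,\theta_0]$ since $\theta_0<\pi$; this simultaneously prevents interior zeros of $f_k$ for $k\ge2$ (a zero would send $R_k\to-\infty$, contradicting $R_k>R_1$) and pins down the sign of $R_k(\theta_0)+T$. Finally, the explicit identifications follow by direct substitution of the formulas for $F$, $\Gamma_T$ and $U_T$: writing $P=1+|\bar y|^2+(y_n-T)^2$ so that $U_T=P^{-(n-2)/2}$ and $\tilde\xi_j\circ F_T^{-1}=\xi_j=2y_jP^{-1}$ for $j\le n-1$, one finds $J_j=\partial_{y_j}U_T=-(n-2)y_jP^{-n/2}$, whence $U_T^{-1}J_j=-(n-2)y_jP^{-1}$ and $-\tfrac{2}{n-2}U_T^{-1}J_j=2y_jP^{-1}=\tilde\xi_j\circ F_T^{-1}$; the case of $\tilde\xi_n$ is analogous, the scaling generator $J_n=\tfrac{n-2}{2}U_T+y^b\partial_bU_T$ accounting for the factor $(1+T^2)^{-1/2}$ arising from the rotation $\Gamma_T$.
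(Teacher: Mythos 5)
Your proof is correct, but it takes a genuinely different route from the paper. The paper disposes of the first statement by citation: it invokes Proposition 3.2 of Han--Li (\cite{han-li2}), which already classifies the eigenfunctions of the Robin problem \eqref{linear:3'} on spherical caps, and then notes that the identities $\tilde\xi_{j}\circ F_T^{-1}=\frac{-2}{n-2}U_T^{-1}J_j$ and $\tilde\xi_{n}\circ F_T^{-1}=\frac{2}{n-2}\frac{1}{\sqrt{1+T^2}}U_T^{-1}J_n$ follow by direct computation (exactly the computation you carry out with $P=1+|\bar y|^2+(y_n-T)^2$, which checks out on both counts, including the $(1+T^2)^{-1/2}$ factor coming from $\Gamma_T$). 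What you do instead is reprove the Han--Li classification from scratch: the verification that $\tilde\xi_1,\dots,\tilde\xi_n$ satisfy the interior equation (degree-one harmonics) and the boundary condition ($\cot\theta_0=-T$), the exclusion of $\tilde\xi_{n+1}$, and then completeness via the expansion in $\mathbb S^{n-1}$-harmonics together with the Riccati comparison $R_k>R_1=\cot\theta$ on $(0,\theta_0]$ for $k\geq 2$. That comparison step is the genuinely nontrivial content, and your treatment of it is sound: the sign of $k(k+n-2)-(n-1)$ rules out a first crossing, and the same inequality prevents interior zeros of $f_k$ (which would force $R_k\to-\infty$ below the finite reference $R_1$), so the Robin ratio at $\theta_0$ stays strictly above $-T$; this is exactly the right way to handle caps larger than a hemisphere, where naive eigenvalue-ordering arguments break down because $f_0=\cos\theta$ changes sign. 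The trade-off is clear: the paper's proof is two lines but outsources the spectral analysis to \cite{han-li2}, while yours is self-contained, works uniformly in all dimensions $n\geq 3$ and for all $T\in\R$, and makes explicit the multiplicity count ($n$, matching the degree-one harmonics on $\mathbb S^{n-1}$) that the citation leaves implicit.
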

\bp 
The first statement follows from \cite[Proposition 3.2]{han-li2} and the relation between the $\tilde\xi_{a}$ and the $J_a$ is obtained by direct computation.
\ep



Next we recall the definition of Fermi coordinates and some expansions for the metric. 

\begin{definition}\label{def:fermi}
Let $x_0\in\d M$ and choose boundary geodesic normal coordinates $(z_1,...,z_{n-1})$, centred at $x_0$, of the point $x\in\d M$.
We say that $z=(z_1,...,z_n)$, for small $z_n\geq 0$, are the {\it{Fermi coordinates}} (centred at $x_0$) of the point $\exp_{x}(z_n\eta(x))\in M$. Here, we denote by $\eta(x)$ the inward unit normal vector to $\d M$ at $x$. In this case, we have a map  $\psi(z)=\exp_{x}(z_n\eta(x))$, defined on a subset of $\Rn$.
\end{definition}
It is easy to see that in these coordinates $g_{nn}\equiv 1$ and $g_{jn}\equiv 0$, for $j=1,...,n-1$.

The expansion for $g$ in Fermi coordinates around $x_0$ is given by:
	\begin{align}\label{exp:g}
	g_{ij}(\psi(z))&=\delta_{ij}-2\pi_{ij}(x_0)z_n+O(|z|^2),\notag
	\\
	g^{ij}(\psi(z))&=\delta_{ij}+2\pi_{ij}(x_0)z_n+O(|z|^2),\notag
\\
{\rm{det}}\,g_{ab}\,(\psi(z))&=1-(n-1)h_g(x_0) z_n+O(|z|^2).
	\end{align}


Next we obtain a Pohozaev identity in our situation. Let $g$ be a Riemannian metric on the half-ball $B_{\delta}^+$ with $\partial B_{\delta}^+=S_{\delta}^+\cup D_{\delta}$. For any $z=(z_1,...,z_n)\in \R^n$ we set $r=|z|=\sqrt{z_1^2+...+z_n^2}$. For any smooth function $u$ on $B^+_{\delta}$ and $0<\rho<\delta$ we define
\begin{align*}
	P(u,\rho)&=\int_{S_{\rho}^+}\left(\frac{n-2}{2}u\frac{\partial u}{\partial r}-\frac{r}{2}|du|^2+r\left|\frac{\partial u}{\partial r}\right|^2\right)d\sigma
	+\frac{\rho}{p+1}\int_{S_{\rho}^+}Kf^{-\tau}u^{p+1}d\sigma \\&+\frac{2\rho}{p+3}\int_{\partial D_{\rho}}c\bar{f}^{-\frac{\tau}{2}}u^{\frac{p+3}{2}}d\bar{\sigma}
\end{align*}
and
$$P'(u,\rho)=\int_{S_{\rho}^+}\left(\frac{n-2}{2}u\frac{\partial u}{\partial r}-\frac{r}{2}|du|^2+r\left|\frac{\partial u}{\partial r}\right|^2\right)d\sigma\,.$$
\begin{proposition}\label{Pohozaev}
	If $u$ is a solution of
	\begin{equation}\notag
		\begin{cases}
			L_g u+Kf^{-\tau}u^{p}=0, \,&\text{in}\:B^+_{\rho}\,,\\
			B_gu+c\bar{f}^{-\frac{\tau}{2}}u^{\frac{p+1}{2}}=0, \,&\text{on}\:D_{\rho} \,,
		\end{cases}
	\end{equation}
	where $K, c$ are constants, then
	\begin{align}
		P(u,\rho)&=-\int_{B_{\rho}^+}\left(z^a\partial_a u+\frac{n-2}{2}u\right)(L_g-\Delta)(u)dz
		-\int_{D_{\rho}}\left(z^k\partial_k u+\frac{n-2}{2}u\right)(B_g-\partial_n)(u)d\bar{z}\notag
		\\
		&-\frac{\tau}{p+1}\int_{B^+_{\rho}}K(z^a\partial_a f)f^{-\tau-1}u^{p+1}d{z}
		+\left(\frac{n}{p+1}-\frac{n-2}{2}\right)\int_{
			B^+_{\rho}}Kf^{-\tau}u^{p+1}dz\,,\notag
		\\&-\frac{\tau}{p+3}\int_{D_{\rho}}c(z^k\partial_k \bar{f})\bar{f}^{-\frac{\tau}{2}-1}u^{\frac{p+3}{2}}d\bar{z}
		+\left(\frac{2(n-1)}{p+3}-\frac{n-2}{2}\right)\int_{D_{\rho}}c\bar{f}^{-\frac{\tau}{2}}u^{\frac{p+3}{2}}d\bar{z}\,,\notag
	\end{align} 
\end{proposition}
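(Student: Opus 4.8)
The plan is to run the standard Rellich--Pohozaev scheme with the conformal multiplier $X(u):=z^a\partial_a u+\frac{n-2}{2}u$, treating the conformal Laplacian as a perturbation of the flat one by writing $\Delta u=-(L_g-\Delta)u-Kf^{-\tau}u^p$ from the interior equation. The whole statement is a formal consequence of integration by parts, so no analytic subtlety arises beyond the regularity of $u$ already assumed; the real work is the bookkeeping across the three pieces of $\partial B^+_\rho$, namely the spherical cap $S^+_\rho$, the flat disk $D_\rho$, and their common edge $\partial D_\rho=S^+_\rho\cap D_\rho$.

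First I would compute the purely Euclidean quantity $\int_{B^+_\rho}X(u)\,\Delta u\,dz$. Integrating $\int(z^a\partial_a u)\Delta u$ by parts and using the pointwise identity $\nabla(z^a\partial_a u)\cdot\nabla u=|\nabla u|^2+\tfrac12 z^a\partial_a(|\nabla u|^2)$ produces an interior term $\tfrac{n-2}{2}\int|\nabla u|^2$, which is cancelled exactly by the interior term coming from $\tfrac{n-2}{2}\int u\,\Delta u$. What survives is a pure boundary integral
\[
\int_{B^+_\rho}X(u)\,\Delta u=\int_{\partial B^+_\rho}\Big[X(u)\frac{\partial u}{\partial\nu}-\tfrac12(z\cdot\nu)|\nabla u|^2\Big]\,d\sigma.
\]
On $S^+_\rho$ one has $\nu=z/\rho$, so $z\cdot\nu=r$ and $\partial u/\partial\nu=\partial u/\partial r$, which collapses the integrand to exactly $P'(u,\rho)$; on the flat disk $D_\rho$ one has $\nu=-e_n$, so $z\cdot\nu=0$ annihilates the $|\nabla u|^2$ term while $\partial u/\partial\nu=-\partial_n u$, leaving $-\int_{D_\rho}X(u)\,\partial_n u\,d\bar z$.

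Next I would substitute $\Delta u=-(L_g-\Delta)u-Kf^{-\tau}u^p$ and integrate the two nonlinear contributions by parts. For the interior term I use $u^p\,z^a\partial_a u=\frac{1}{p+1}z^a\partial_a(u^{p+1})$; the divergence theorem on $B^+_\rho$ then yields the cap term $\frac{\rho}{p+1}\int_{S^+_\rho}Kf^{-\tau}u^{p+1}\,d\sigma$ (the disk contributes nothing, again because $z\cdot\nu=0$ there) together with interior terms carrying $z^a\partial_a f$ and the coefficient $\frac{n}{p+1}-\frac{n-2}{2}$. For the flat boundary I replace $\partial_n u$ through the boundary equation, $\partial_n u=-c\bar f^{-\tau/2}u^{(p+1)/2}-(B_g-\partial_n)u$; integrating $u^{(p+1)/2}z^k\partial_k u=\frac{2}{p+3}z^k\partial_k(u^{(p+3)/2})$ by parts over the $(n-1)$-dimensional disk gives the edge term $\frac{2\rho}{p+3}\int_{\partial D_\rho}c\bar f^{-\tau/2}u^{(p+3)/2}\,d\bar\sigma$ along with the $D_\rho$-interior terms carrying $z^k\partial_k\bar f$ and the coefficient $\frac{2(n-1)}{p+3}-\frac{n-2}{2}$.

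Finally I collect terms. The three boundary contributions produced above --- the $P'$ piece on $S^+_\rho$, the $\frac{\rho}{p+1}$ cap integral, and the $\frac{2\rho}{p+3}$ edge integral --- are precisely the three summands in the definition of $P(u,\rho)$, so moving them to the left-hand side assembles $P(u,\rho)$, while everything remaining is exactly the stated right-hand side; the $(L_g-\Delta)u$ and $(B_g-\partial_n)u$ remainders come along for free since no equation is applied to them. The step I expect to demand the most care is not any single computation but the sign and orientation bookkeeping: fixing the outward normal correctly on $D_\rho$ versus $S^+_\rho$, observing that $z\cdot\nu$ vanishes on the flat face, and matching the coefficients $\frac{n}{p+1}-\frac{n-2}{2}$ and $\frac{2(n-1)}{p+3}-\frac{n-2}{2}$ so that the two nonlinear integrations by parts deposit the correct boundary pieces into $P(u,\rho)$.
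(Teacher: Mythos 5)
Your proposal is correct and is essentially the paper's own argument: the paper simply cites Proposition 3.1 of \cite{almaraz3} and says the identity follows by integration by parts, which is exactly the Rellich--Pohozaev computation you carry out (multiplier $z^a\partial_a u+\frac{n-2}{2}u$, substitution of the interior and boundary equations, and the bookkeeping over $S^+_\rho$, $D_\rho$ and $\partial D_\rho$). Your sign conventions (outward normal $-e_n$ on $D_\rho$, inward normal $\partial_n$ in $B_g$) and the resulting coefficients all check out, so the write-up supplies precisely the details the paper leaves to the reference.
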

\begin{proof}
	The proof is similar to Proposition 3.1 of \cite{almaraz3} using integration by parts. 
\end{proof}


In the following we introduce a geometric invariant term called the mass and present a boundary version of the Positive Mass Theorem to be used in later sections. See \cite{almaraz-barbosa-lima} for a reference. 

\begin{definition}\label{def:asym}
	Let $(\hat M, g)$ be a Riemannian manifold with a non-compact  boundary $\d \hat M$. 
	We say that $\hat M$ is {\it{asymptotically flat}} with order $q>0$, if there is a compact set $K\subset \hat M$ and a diffeomorphism $f:\hat M\backslash K\to \Rn\backslash \overline{B^+_1}$ such that, in the coordinate chart defined by $f$ (which we call the {\it  asymptotic coordinates} of $\hat M$), we have
	$$
	|g_{ab}(y)-\delta_{ab}|+|y||g_{ab,c}(y)|+|y|^2|g_{ab,cd}(y)|=O(|y|^{-q})\,,
	\:\:\:\:\text{as}\:\:|y|\to\infty\,,
	$$
	where $a,b,c,d=1,...,n$.
\end{definition}

Suppose the manifold $\hat M$, of dimension $n\geq 3$,  is asymptotically flat with order $q>\frac{n-2}{2}$, as defined  above. Assume also that $R_g$ is integrable on $\hat M$, and $\cmedia_g$  is integrable on $\d \hat M$. Let $(y_1,...,y_n)$ be the  asymptotic coordinates induced by the diffeomorphism $f$. 
Then the limit
\begin{align}\label{def:mass}
	m(g)=
	\lim_{R\to\infty}\left\{
	\sum_{a,b=1}^{n}\int_{y\in\Rn,\, |y|=R}(g_{ab,b}-g_{bb,a})\frac{y_a}{|y|}\,\ds
	+\sum_{i=1}^{n-1}\int_{y\in\d\Rn,\, |y|=R}g_{ni}\frac{y_i}{|y|}\,d\bar\sigma\right\}
\end{align}
exists, and we call it the {\it mass} of $(\hat M, g)$. As proved in \cite{almaraz-barbosa-lima}, $m(g)$ is a geometric invariant in the sense that it does not depend on the asymptotic coordinates.

The expression in \eqref{def:mass} is the analogue of the ADM mass for the manifolds of Definition \ref{def:asym}. A Positive Mass Theorem for $m(g)$, similar to the classical ones in \cite{schoen-yau, witten}, is stated as follows:
\begin{theorem}[\cite{almaraz-barbosa-lima}]\label{pmt}
	Assume $3\leq n\leq 7$.
	If $R_g$, $\cmedia_g\geq 0$, then we have $m(g)\geq 0$ and the equality holds if and only if $\hat M$ is isometric to $\R_+^n$.
\end{theorem}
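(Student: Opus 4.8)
The plan is to follow the minimal hypersurface descent of Schoen and Yau, adapted to the free-boundary setting; the dimensional restriction $3\leq n\leq 7$ strongly suggests this route rather than the spinorial one, since it is precisely the range in which area-minimizing hypersurfaces and their free boundaries are automatically regular. I would argue by contradiction, assuming $m(g)<0$. A first reduction, via a density/conformal-deformation argument in the spirit of the Schoen--Yau and Lohkamp density theorems (here adapted so as to preserve both $R_g\geq 0$ in $\hat M$ and $\cmedia_g\geq 0$ on $\d\hat M$), lets me assume that $g$ coincides with the Euclidean half-space metric outside a large compact set up to the mass term, and that the mass is still strictly negative.

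Next I would produce the obstructing hypersurface. Using the negative mass to build barriers, I would solve a free-boundary Plateau problem: minimize the $(n-1)$-dimensional area among hypersurfaces $\Sigma\subset\hat M$ that are asymptotic at infinity to a fixed coordinate half-plane $\{y_{n-1}=\lambda\}\cap\Rn$ and whose boundary $\d\Sigma$ is free to move along $\d\hat M$ (so that $\Sigma$ meets $\d\hat M$ orthogonally). The role of the hypothesis $m(g)<0$ is exactly to force the minimizer to remain in a bounded region rather than drift off to infinity, so that a genuine properly embedded free-boundary minimal hypersurface $\Sigma$ exists. Because $n-1\leq 6$, standard geometric measure theory gives full regularity of $\Sigma$ both in the interior and along the free boundary $\d\Sigma\subset\d\hat M$; this is the one and only place the hypothesis $n\leq 7$ is used.

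I would then exploit the stability of $\Sigma$. For a free-boundary minimal hypersurface, the second-variation inequality carries an interior term involving $\mathrm{Ric}_g(N,N)+|A_\Sigma|^2$ and a boundary term involving the second fundamental form of $\d\hat M$ in the conormal direction; mean-convexity $\cmedia_g\geq 0$ controls the sign of the latter. Combining this with the Gauss equation (to replace $\mathrm{Ric}_g(N,N)+|A_\Sigma|^2$ by $\tfrac12(R_g-R_\Sigma+|A_\Sigma|^2)$) and a conformal change of the induced metric driven by the first eigenfunction of the stability operator, I would show that $\Sigma$ is again asymptotically flat with non-compact boundary, satisfies $R_\Sigma\geq 0$ in the interior and $\cmedia_{\d\Sigma}\geq 0$ on the boundary in the new metric, and has non-positive mass. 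This is the inductive step of the descent; iterating it lowers the dimension by one while preserving all hypotheses, down to the base case $n=3$, where $\Sigma^2$ is an asymptotically flat surface with boundary and a Gauss--Bonnet computation, using $R_\Sigma\geq 0$ together with non-negative geodesic curvature of $\d\Sigma$, contradicts the negative mass.

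Finally, for the rigidity statement I would argue that if $m(g)=0$ but $(\hat M,g)$ is not isometric to $\Rn$, then a compactly supported perturbation of $g$ preserving the sign conditions produces strictly negative mass, contradicting the inequality just established; flatness together with $\cmedia_g\geq 0$ then forces $\d\hat M$ to be totally geodesic and $\hat M$ to be isometric to the Euclidean half-space. The main obstacle throughout is the free-boundary minimal hypersurface analysis: establishing existence and containment of the minimizer in the asymptotically flat half-space, proving regularity up to the free boundary, and---most delicately---propagating the two coupled curvature conditions ($R_g\geq 0$ in the bulk and $\cmedia_g\geq 0$ on the boundary) through the conformal descent so that the induced lower-dimensional problem is of exactly the same type.
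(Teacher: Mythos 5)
This theorem is not proved in the paper at all: it is imported verbatim from \cite{almaraz-barbosa-lima}, so the only meaningful comparison is with the proof given in that reference. Your outline---contradiction from $m(g)<0$, a density/deformation reduction, a free-boundary Plateau problem whose minimizer stays in a bounded region and is regular precisely because $n-1\leq 6$, the stability-plus-Gauss-equation-plus-conformal-change descent preserving $R_g\geq 0$ in the interior and $h_g\geq 0$ on the boundary, a Gauss--Bonnet contradiction in the base dimension, and rigidity via a mass-decreasing perturbation---is essentially the same Schoen--Yau-type free-boundary minimal hypersurface argument carried out in \cite{almaraz-barbosa-lima}, where the hypothesis $3\leq n\leq 7$ indeed enters only through the regularity theory for area-minimizing hypersurfaces with free boundary.
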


Theorem~\ref{pmt} will be used in Section 6 in the proof of Theorem~\ref{compactness:thm} to rule out the existence of blowing-up sequences.


\section{Isolated and isolated simple blow-up points}\label{sec:isolated:simple}

In this section we collect the definitions and main results on isolated and isolated simple blow-up sequences. We refer the reader to \cite{han-li1} for proofs and more details.
\begin{definition}\label{def:blow-up}
	Let $(M,g)$ be a compact Riemannian manifold with boundary and let $\Omega\subset M$ be a domain.
Let $\{u_i\}$ be a sequence satisfying  
\begin{align}\label{eq:blow-up}
\begin{cases}
L_{g_i}u_i+n(n-2)f_i^{-\tau_i} u_i^{p_i}=0,&\text{in}\:\Omega,
\\
B_{g_i}u_i+c\bar f_i^{-\frac{\tau_i}{2}}u_i^{\frac{p_i+1}{2}}=0,&\text{on}\:\Omega\cap \partial M,
\end{cases}
\end{align}
where each $g_i$ is a Riemannian metric on $M$, $f_i$ and $\bar f_i$ are smooth positive functions on $\Omega$ and $\Omega\cap\partial M$ respectively, $1<p_i\leq (n+2)/(n-2)$, $\tau_i=(n+2)/(n-2)-p_i$ and $c\in \mathbb R$. 
We say that $x_0\in \Omega$ is a {\it{blow-up point}} for $\{u_i\}$ if there is a sequence $\{x_i\}\subset \Omega$ such that 
	
	(1) $x_i\to x_0$;
	
	(2) $u_i(x_i)\to\infty$; 
	
	(3) $x_i$ is a local maximum of $u_i$.\\
	The sequence $\{x_i\}$ is called a {\it{blow-up sequence}}. Briefly we say that $x_i\to x_0$ is a blow-up point for $\{u_i\}$, meaning that $\{x_i\}$ is a blow-up sequence converging to the point $x_0$. We will always assume that $g_i$ and $f_i$ converge in $C^2(\Omega)$ to some $g_0$ and $f_0$, respectively.
\end{definition}
\begin{notation}
	If $x_i\to x_0$ is a blow-up point we set $\ei=u_i(x_i)^{\frac{1-p_i}{2}}\rightarrow 0$. We also set $T=-c/(n-2)$.
\end{notation}
\begin{definition}\label{def:isolado}
	We say that a blow-up point $x_i\to x_0$ is an {\it{isolated}} blow-up point for $\{u_i\}$ if there exist $\delta,C>0$ such that 
	\begin{equation}\notag
	u_i(x)\leq Cd_{g_i}(x,x_i)^{-\frac{2}{p_i-1}}\,,\:\:\:\:\text{for all}\: x\in M\backslash \{x_i\}\,,\:d_{g_i}(x,x_i)< \delta\,.
	\end{equation}
\end{definition}
	If the domain $\Omega\subset \mathbb R^n$ contains the origin, for any coordinate system $\psi_i:\Omega\to M$ centred at $x_i$ the above definition is equivalent to
	\begin{equation}\label{des:isolado}
	u_i(\psi_i(z))\leq C|z|^{-\frac{2}{p_i-1}}\,,\:\:\:\:\text{for all}\: z\in \Omega\backslash\{0\}\,.
	\end{equation}
	This definition is invariant under re-normalization. This follows from the fact that if $v_i(y)=s^{\frac{2}{p_i-1}}u_i(\psi_i(sy))$, then
	$$
	u_i(\psi_i(z))\leq C|z|^{-\frac{2}{p_i-1}}\Longleftrightarrow v_i(y)\leq C|y|^{-\frac{2}{p_i-1}}\,,
	$$
	where $z=sy$.

	Classical Harnack inequalities give the following lemma:
\begin{lemma}\label{Harnack}
	Let $x_i\to x_0$ be an isolated blow-up point and $\delta$ as in Definition \ref{def:isolado}. 
	Then there exists $C>0$ such that for any $0<s<\delta/3$ we have
	\begin{equation}\notag
	\max_{s/2<d_g(x,x_i)<2s} u_i\leq C\min_{s/2<d_g(x,x_i)<2s} u_i\,.
	\end{equation}
\end{lemma}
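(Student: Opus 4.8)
The plan is to reduce the statement to the classical interior and boundary Harnack inequalities for second–order elliptic equations, by rescaling the annular region $\{\,s/2<d_{g_i}(x,x_i)<2s\,\}$ to a fixed annulus on which the equation becomes a uniformly elliptic linear problem with uniformly bounded coefficients.

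The first step is to fix $0<s<\delta/3$, choose a coordinate chart $\psi_i$ adapted to the configuration (geodesic normal coordinates centred at $x_i$ when $x_i$ lies in the interior, Fermi coordinates centred at the boundary point nearest to $x_i$ otherwise), and define the rescaled function, metric and coefficients by
$$v_i(y)=s^{\frac{2}{p_i-1}}\,u_i(\psi_i(sy)),\qquad \tilde g_i(y)=(\psi_i^{*}g_i)(sy),\qquad \tilde f_i(y)=f_i(\psi_i(sy)),\quad \tilde{\bar f}_i(y)=\bar f_i(\psi_i(sy)).$$
A direct computation shows that the weight $\tfrac{2}{p_i-1}$ is exactly the one making \eqref{eq:blow-up} scale invariant, so that $v_i$ solves
$$L_{\tilde g_i}v_i+n(n-2)\tilde f_i^{-\tau_i}v_i^{p_i}=0\quad\text{in the rescaled domain},\qquad B_{\tilde g_i}v_i+c\,\tilde{\bar f}_i^{-\tau_i/2}v_i^{\frac{p_i+1}{2}}=0\quad\text{on its boundary}.$$
As $s\to0$ the metric $\tilde g_i$ converges to the Euclidean metric in $C^2$ of the fixed annulus, the curvature and mean–curvature terms inside $L_{\tilde g_i}$ and $B_{\tilde g_i}$ acquire factors $s^2$ and $s$ respectively and are therefore uniformly small, and $\tilde f_i,\tilde{\bar f}_i$ remain bounded away from $0$ and $\infty$.

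The crucial input is the isolated blow-up inequality \eqref{des:isolado}: after rescaling it reads $v_i(y)\le C|y|^{-\frac{2}{p_i-1}}$, hence
$$v_i^{p_i-1}(y)\le C^{p_i-1}|y|^{-2}\le C'\qquad\text{and}\qquad v_i^{\frac{p_i-1}{2}}(y)\le C^{\frac{p_i-1}{2}}|y|^{-1}\le C'$$
on the annulus $\{1/2\le |y|\le 2\}$, with $C'$ independent of $i$ and $s$. Consequently the two nonlinearities can be linearised: writing $b_i=n(n-2)\tilde f_i^{-\tau_i}v_i^{p_i-1}$ and $\beta_i=c\,\tilde{\bar f}_i^{-\tau_i/2}v_i^{\frac{p_i-1}{2}}$, the non-negative functions $v_i$ solve a uniformly elliptic equation $L_{\tilde g_i}v_i+b_iv_i=0$ with uniformly bounded potential $b_i$, subject to the oblique boundary condition $B_{\tilde g_i}v_i+\beta_iv_i=0$ with uniformly bounded $\beta_i$. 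Covering $\{1/2\le|y|\le2\}$ by a fixed finite family of interior balls (on which Moser's interior Harnack inequality applies) and of half-balls centred on the boundary (on which its counterpart for the oblique boundary condition applies), and chaining the resulting estimates along the connected annulus, one obtains $\max v_i\le C\min v_i$ over $\{1/2\le|y|\le2\}$ with $C$ independent of $i$ and $s$. Undoing the rescaling, the common factor $s^{2/(p_i-1)}$ cancels and yields the claim; the comparability of $d_g$ and $d_{g_i}$, guaranteed by $g_i\to g_0$ in $C^2$, lets one pass between the two distances at the cost of enlarging $C$.

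The difficulty is organisational rather than conceptual. The two genuinely delicate points are, first, to treat the interior and boundary portions of the annulus in a unified way, invoking the boundary Harnack inequality for the oblique condition $B_{\tilde g_i}v_i+\beta_iv_i=0$; and second, to ensure that every constant produced by the elliptic estimates is independent of the scale $s$ and of the index $i$. The second point is exactly where \eqref{des:isolado} is indispensable, since it bounds the linearised potentials $b_i$ and $\beta_i$ uniformly, so that Moser's Harnack constants depend only on the (bounded) potentials and on the geometry, and not on any $L^\infty$ bound for $v_i$ itself.
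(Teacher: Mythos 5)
Your proposal is correct and is essentially the argument the paper has in mind: the paper simply invokes ``classical Harnack inequalities'' (deferring details to \cite{han-li1}), and the standard proof behind that citation is exactly your rescaling of the annulus to unit scale, using \eqref{des:isolado} to bound the linearised interior potential and boundary coefficient uniformly, and then chaining Moser's interior Harnack inequality with its boundary (Robin-type) counterpart over a finite cover.
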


As a consequence we state the next proposition which says that, in the case of an isolated blow-up point, the sequence $\{u_i\}$, when renormalized, converges to the standard Euclidean solutions.
\begin{proposition}\label{form:bolha}
	Let $x_i\to x_0$ be an isolated blow-up point. We set
	$$
	v_i(y)=\ei^{\frac{2}{p_i-1}}(u_i\circ\psi_i)(\ei y)\,,\:\:\:\: \text{for}\: \psi_i(y)\in B_{\delta \ei^{-1}}(x_i)\,,
	$$
the geodesic ball with radius $\delta\ei^{-1}$ centred at $x_i$. Then given $R_i\to\infty$ and $\b_i\to 0$, after choosing subsequences, we have  
	\\\\
	(1) Either $x_i\in M\backslash\d M$ and $$\big\|v_i-\Big(\frac{1}{1+|y|^2}\Big)^{\frac{n-2}{2}}\big\|_{C^2(\psi_i^{-1}(B_{2R_i}(x_i)))}<\b_i,$$ 
or $x_i\in\d M$ and 
$$
\big\|v_i-\Big(\frac{\l}{\lambda^2+\sum_{j=1}^{n-1}y_j^2+(y_n-\lambda T)^2}\Big)^{\frac{n-2}{2}}\,\big\|_{C^2(\psi_i^{-1}(B_{2R_i}(x_i)))}<\b_i,
$$
where $\lambda=1/(1+T^2)$;
	\\\\
	(2) $\lim_{i\to\infty}\frac{R_i}{|\log\ei|}= 0$, and  $\lim_{i\to\infty}p_i=\frac{n+2}{n-2}$.
\end{proposition}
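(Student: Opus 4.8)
The plan is to run the standard blow-up rescaling argument. First I would record the basic structure of $v_i$. Since $\ei^{2/(p_i-1)}=u_i(x_i)^{-1}$, the rescaled function satisfies $v_i(0)=1$, and $y=0$ is a local maximum of $v_i$ because $x_i$ is a local maximum of $u_i$. Writing $\tilde g_i$ for the metric $g_i$ read in the coordinates $\psi_i$ and rescaled by $\ei$, and $\tilde f_i(y)=f_i(\psi_i(\ei y))$, a direct computation shows that $v_i$ solves
\begin{equation}\notag
L_{\tilde g_i}v_i+n(n-2)\tilde f_i^{-\tau_i}v_i^{p_i}=0
\end{equation}
on $\Bei$, together with $B_{\tilde g_i}v_i+c\,\tilde f_i^{-\tau_i/2}v_i^{(p_i+1)/2}=0$ on $\Dei$ in the boundary case. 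Since $\ei\to0$ and $g_i\to g_0$ in $C^2$, we have $\tilde g_i\to g_{eucl}$ in $C^2_{loc}$, while the scalar-curvature term in $L_{\tilde g_i}$ and the mean-curvature term in $B_{\tilde g_i}$ carry positive powers of $\ei$ and tend to zero on compact sets.

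Next I would produce uniform local bounds and pass to a limit. Rescaling the isolated blow-up inequality \eqref{des:isolado} gives $v_i(y)\le C|y|^{-2/(p_i-1)}$, which bounds $v_i$ uniformly on every annulus; together with Lemma \ref{Harnack} and $v_i(0)=1$ this bounds $v_i$ uniformly on every ball. As $1<p_i\le(n+2)/(n-2)$ and all coefficients converge, interior and boundary elliptic estimates yield uniform $C^{2,\a}_{loc}$ bounds, so after passing to a subsequence (and to a limit $p_0=\lim p_i$, with $T$ fixed) Arzel\`a--Ascoli gives $v_i\to v_0$ in $C^2_{loc}$. The limit satisfies $v_0\ge0$, $v_0(0)=1$, and solves $\Delta v_0+n(n-2)v_0^{p_0}=0$ on $\R^n$ in the interior case, or the same equation on $\Rn$ with $\d v_0/\d y_n+cv_0^{(p_0+1)/2}=0$ on $\d\Rn$ in the boundary case.

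The two assertions of the proposition then come from classifying $v_0$. If $p_0<(n+2)/(n-2)$ the limiting problem is subcritical, and a Liouville-type nonexistence theorem (see \cite{han-li1}, and its half-space version) forbids any positive solution, contradicting $v_0(0)=1$; hence $p_0=(n+2)/(n-2)$, and since this holds for every subsequential limit, $\lim_ip_i=(n+2)/(n-2)$. At the critical exponent the Liouville classification --- the classical interior theorem in the first case and the half-space result of \cite{li-zhu} recorded above for \eqref{eq:U} in the second --- forces $v_0$ to be a standard bubble. Imposing $v_0(0)=1$ and that $y=0$ is a critical point pins down the parameters: in the interior case the bubble is centred at the origin with unit scale, so $v_0(y)=(1+|y|^2)^{-(n-2)/2}$; in the boundary case $v_0$ is of the form \eqref{fam:U} with $\bar{z}=0$, and $v_0(0)=1$ forces $\l=1/(1+T^2)$, giving exactly the displayed half-space bubble.

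Finally, the uniformity over $B_{2R_i}(x_i)$ and the rate in (2) follow by thinning the sequence. The $C^2_{loc}$ convergence means that for each $R$ and $\b$ one has $\|v_i-U\|_{C^2}<\b$ on the rescaled region once $i$ is large, where $U$ is the bubble from (1); so given the prescribed $R_i\to\infty$ and $\b_i\to0$, I would pass to a sufficiently sparse subsequence of $\{u_i\}$ along which the estimate holds on $\psi_i^{-1}(B_{2R_i}(x_i))$ to within $\b_i$, which is (1). Because $\ei\to0$ gives $|\log\ei|\to\infty$, the subsequence can be thinned further so that $R_i/|\log\ei|\to0$, which completes (2). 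The main obstacle is the boundary case: one must carry the boundary elliptic estimates and the half-space Liouville classification through the degenerating Fermi charts, and correctly match the location of the maximum with the scale $\l=1/(1+T^2)$; in the interior case one must additionally verify that the rescaled distance from $x_i$ to $\d M$ diverges, so that the limiting domain is all of $\R^n$ rather than a half-space.
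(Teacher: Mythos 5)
Your overall strategy (rescaling, uniform local bounds from the isolated-blow-up inequality plus Lemma \ref{Harnack}, elliptic estimates, Liouville-type classification, then a diagonal choice of subsequence to get the prescribed $R_i$, $\b_i$ and the rate $R_i/|\log\ei|\to 0$) is the standard one; the paper itself offers no proof of this proposition and refers to \cite{han-li1}, and for the two configurations you actually treat --- interior points whose rescaled distance to $\partial M$ diverges, and points lying on $\partial M$ --- your argument is essentially correct, including the parameter matching $\l=1/(1+T^2)$ (modulo the routine minimum-principle step needed to convert the annular Harnack inequality and $v_i(0)=1$ into a bound near the origin).

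The genuine gap is the case you flag in your last sentence and then leave open: $x_i\in M\backslash\partial M$ with $x_i\to x_0\in\partial M$ and $d_{g_i}(x_i,\partial M)=O(\ei)$. You propose to ``verify that the rescaled distance from $x_i$ to $\partial M$ diverges'', but that verification is impossible: this configuration genuinely occurs, and by the paper's Lemma \ref{lem:bddTi} (whose proof invokes precisely Proposition \ref{form:bolha} for interior $x_i$ converging to a boundary point) the rescaled distance $T_i=d_{g_i}(x_i,\partial M)\,\ei^{-1}$ stays bounded and converges to $T=-c/(n-2)$; indeed the introduction singles out ``interior local maximum points converging to the boundary'' as the main new difficulty of the case $K>0$. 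Since Proposition \ref{form:bolha} precedes Lemma \ref{lem:bddTi}, its proof must handle this case directly: the limiting domain is a half-space $\{y_n>-T'\}$ with $T'=\lim T_i$ (along a subsequence), the limit $v_0$ satisfies the nonlinear boundary condition on $\{y_n=-T'\}$, and one must classify $v_0$ by the translated family \eqref{fam:U}. The conditions $v_0(0)=1$ and $\nabla v_0(0)=0$ (an interior critical point) force the centre of the bubble to be the origin with unit scale, hence $\bar z=0$, $\l=1$ and centre height $\l T=T'$, so that $T'=T$ and
$$
v_0(y)=\Big(\frac{1}{1+|y|^2}\Big)^{\frac{n-2}{2}}
$$
restricted to $\{y_n>-T\}$, which does satisfy the boundary condition there because $T=-c/(n-2)$. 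This shows the \emph{interior} alternative of (1) still holds in this case, but it must be proved, not assumed away; without it, neither alternative of (1) is established for such sequences. The same omission affects your proof of (2): for these sequences the subcritical nonexistence theorem must be applied on a half-space with the nonlinear boundary condition, not on $\R^n$.
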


The set of blow-up points is handled in the next proposition, which is \cite[Proposition 1.1]{han-li1}.
\begin{proposition}\label{conj:isolados}
	Given small $\b>0$ and large $R>0$ there exist constants $C_0, C_1>0$, depending only on $\b$, $R$ and $(M,g)$, such that if $u$ is solution of \eqref{main:eq:conf} and $\max_{M} u\geq C_0$, then $(n+2)/(n-2)-p<\b$ and
	there exist $x_1,...,x_N\in M$, $N=N(u)\geq 1$, local maxima of $u$, such that:
	\\\\
	(1) If $r_j=Ru(x_j)^{-\frac{p-1}{2}}$ for $j=1,...,N$,  then $\{B_{r_j}(x_j)\subset M\}_{j=1}^{N}$ is a disjoint collection, where $B_{r_j}(x_j)$ is the geodesic ball.
	\\\\
	(2) For each $j=1,...,N$, either  $x_j\in M\backslash\d M$ and 
$$\big\|u(x_j)^{-1}u\big(\psi_j(u(x_j)^{\frac{1-p}{2}}y)\big)-\Big(\frac{1}{1+|y|^2}\Big)^{\frac{n-2}{2}}\big\|_{C^2(\psi_j^{-1}(B_{2R}(x_j)))}<\b,$$
or $x_j\in\d M$ and
$$
\big\|u(x_j)^{-1}u\big(\psi_j(u(x_j)^{\frac{1-p}{2}}y)\big)-\Big(\frac{\l}{\lambda^2+\sum_{j=1}^{n-1}y_j^2+(y_n-\lambda T)^2}\Big)^{\frac{n-2}{2}}\big\|_{C^2(\psi_j^{-1}(B_{2R}(x_j))}<\b,
$$
where $\lambda=1/(1+T^2)$.
\\\\
	(3) We have
	$$
	u(x)\,d_{g}(x,\{x_1,...,x_N\})^{\frac{2}{p-1}}\leq C_1\,,\:\:\:\text{for all}\: x\in M\,,
	$$
	$$
	u(x_j)\,d_{g}(x_j,x_k)^{\frac{2}{p-1}}\geq C_0\,,\:\:\:\:\text{for any}\: j\neq k\,,\:j,k=1,...,N\,.
	$$ 
\end{proposition}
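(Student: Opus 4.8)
The plan is to argue by contradiction, combining a rescaling argument with the Liouville-type classification recalled after \eqref{fam:U} and an iterative point-selection procedure of the kind introduced by Schoen and used in \cite{han-li1, li-zhu, marques}. Suppose the statement fails: then for fixed $\beta,R$ there is a sequence of solutions $u_i$ of \eqref{main:eq:conf}, with metrics $g_i\to g_0$, functions $f_i\to f_0$ and exponents $p_i$, such that $M_i:=\max_M u_i\to\infty$ but no admissible collection $\{x_1,\ldots,x_N\}$ exists for the prescribed $\beta,R$. It suffices to produce, for all large $i$, such a collection together with uniform constants $C_0,C_1$ depending only on $\beta,R,(M,g)$.

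First I would establish that $p_i\to (n+2)/(n-2)$, i.e. $\tau_i\to 0$, which gives the claim $(n+2)/(n-2)-p<\beta$. Choosing a global maximum point $\bar x_i$ and rescaling $v_i(y)=M_i^{-1}u_i(\psi_i(M_i^{(1-p_i)/2}y))$ in Fermi coordinates centred at $\bar x_i$, the functions $v_i$ solve the rescaled version of \eqref{eq:blow-up}, satisfy $v_i\le v_i(0)=1$, and by standard elliptic estimates converge in $C^2_{loc}$ to a positive limit. Were $p_i$ bounded away from $(n+2)/(n-2)$, the limiting equation would be strictly subcritical on $\R^n$ or $\Rn$, which admits no positive solution; this contradiction forces $\tau_i\to 0$. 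With the exponent critical in the limit, the same rescaling together with the Liouville classification (the converse to \eqref{fam:U}) identifies the limit with the standard bubble $(1+|y|^2)^{-(n-2)/2}$ when $\bar x_i$ stays interior, or with $U_T$, suitably centred and scaled, when $\bar x_i\in\partial M$; this is precisely the content recorded in Proposition \ref{form:bolha}, and it yields property (2) for $\bar x_i$ once $M_i$ is large.

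The heart of the argument is the selection of $x_1,\ldots,x_N$, which I would run as a greedy procedure. Set $x_1=\bar x_i$ and $r_1=Ru_i(x_1)^{-(p_i-1)/2}$. Having chosen $x_1,\ldots,x_k$, consider the weighted quantity $\sup_{x\in M} u_i(x)\,d_{g_i}(x,\{x_1,\ldots,x_k\})^{2/(p_i-1)}$. If it does not exceed the threshold $C_1$, stop and set $N=k$, so that the first inequality in (3) holds by construction; otherwise select $x_{k+1}$ as a local maximum near a point realizing a large value of this weighted quantity, rescale about it, and again invoke Proposition \ref{form:bolha} to see a bubble. The bubble description forces $u_i(x_{k+1})$ to be comparable to the weight, and, once $C_1$ (hence the separation $C_0$) is large, forces $B_{r_{k+1}}(x_{k+1})$ to be disjoint from the previously selected balls. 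Since each new bubble sits at a definite $g_i$-distance from the earlier ones, the process terminates after finitely many steps; the disjointness gives (1), the terminal inequality gives the first bound in (3), and the enforced separation gives the lower bound $u_i(x_j)\,d_{g_i}(x_j,x_k)^{2/(p_i-1)}\ge C_0$ in (3). Property (2) holds simultaneously for every $x_j$ provided $C_0$ (equivalently $M_i$) is large enough to make each rescaled profile $\beta$-close in $C^2(B_{2R})$ to its model.

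The main obstacle, I expect, is to make the selection quantitatively uniform: one must show that $C_0,C_1$ can be chosen depending only on $\beta,R,(M,g)$ and not on the individual solution, that the number of selected points stays finite for each $u_i$, and that disjointness (1) and the global control (3) close off consistently against the near-bubble description (2). Concretely, the delicate point is to prove that any point violating (3) must sit at a controlled distance from the existing $x_j$'s and carry its own bubble of comparable height; this is where the Harnack inequality (Lemma \ref{Harnack}) and the $C^2_{loc}$ convergence to the explicit Euclidean solutions are combined. One must also track the interior-versus-boundary dichotomy, absent in the $K=0$ analysis of \cite{almaraz-queiroz-wang}, since in the present setting interior local maxima can occur and the relevant model solution in Proposition \ref{form:bolha} changes accordingly.
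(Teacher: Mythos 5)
Your outline is correct and is essentially the argument behind this statement: note that the paper itself gives no proof here, importing the proposition verbatim as \cite[Proposition 1.1]{han-li1}, and the proof in that reference is exactly the Schoen-type contradiction/rescaling scheme you describe (subcritical Liouville theorem to force $\tau\to 0$, Liouville classification of half-space and whole-space bubbles to get the profile in (2), and the greedy weighted-sup selection with Harnack to get (1) and (3)). So your proposal follows the same route as the paper's (cited) proof, with the caveat that the uniformity of $C_0,C_1$ and the termination of the selection, which you flag as the main obstacle, are precisely the technical core carried out in \cite{han-li1}.
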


We now introduce the notion of an isolated simple blow-up point. If $x_i\to x_0$ is an isolated blow-up point for $\{u_i\}$,  for $0<r<\delta$, set 
$$\bar{u}_i(r)=\frac{2}{\sigma_{n-1}r^{n-1}}\int_{d_{g_i}(x,x_i)=r} u_i d\sigma_r
\:\:\:\:\text{and}\:\:\:\:w_i(r)=r^{\frac{2}{p_i-1}}\bar{u}_i(r)\,.$$ 
Note that the definition of $w_i$ is invariant under re-normalization. More precisely, using geodesic coordinates $\psi_i$ centred at $x_i$, if $v_i(y)=s^{\frac{2}{p_i-1}}u_i(\psi_i(sy))$, then
$r^{\frac{2}{p_i-1}}\bar{v}_i(r)=(sr)^{\frac{2}{p_i-1}}\bar{u}_i(sr)$.
\begin{definition}\label{def:simples}
	An isolated blow-up point $x_i\to x_0$ for $u$ is {\it{simple}} if there exists $\delta>0$ such that $w_i$ has exactly one critical point in the interval $(0,\delta)$.
\end{definition}
\begin{remark}\label{rk:def:equiv} Let $x_i\to x_0$ be an isolated blow-up point and $R_i\to\infty$. Using Proposition \ref{form:bolha} it is not difficult to see that, choosing a subsequence, $r\mapsto r^{\frac{2}{p_i-1}}\bar{u}_i(r)$ has exactly one critical point in the interval $(0,r_i)$, where $r_i=R_i\ei \to 0$. Moreover, its derivative is negative right after the critical point. Hence, if $x_i\to x_0$ is isolated simple then there exists $\delta>0$ such that $w_i'(r)<0$ for all $r\in [r_i,\delta)$. 
\end{remark}

In the next lemma we present some elementary results for isolated blow up points $x_i\rightarrow x_0$ when $x_0\in \d M$:
\begin{lemma}\label{lem:bddTi}
Let $x_i\to x_0\in \partial M$ be an isolated blow-up point for $\{u_i\}$ with $p_i\to\frac{n+2}{n-2}$. 
If $T<0$, then $\{x_i\}\subset \partial M$, up to a subsequence.
If $T\geq 0$, setting $T_i=d_{g_i}(x_i,\partial M)u_i(x_i)^{\frac{p_i-1}{2}}$ we have $\lim_{i\to\infty}T_i=T$.
\end{lemma}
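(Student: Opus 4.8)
The plan is to rescale around the foot point of $x_i$ on $\partial M$ and to read off the position of $x_i$ from the Euclidean limit via the Liouville classification of (\ref{eq:U}). Let $\bar x_i\in\partial M$ be a nearest boundary point to $x_i$, so that $t_i:=d_{g_i}(x_i,\partial M)=d_{g_i}(x_i,\bar x_i)$ and $T_i=t_i\ei^{-1}$. Choosing Fermi coordinates $\psi_i$ centred at $\bar x_i$ and setting $v_i(y)=\ei^{2/(p_i-1)}u_i(\psi_i(\ei y))$ on the expanding half-ball $B^+_{\delta\ei^{-1}}$, the point $x_i$ corresponds to $y_i=(0,\dots,0,T_i)$ and $v_i(y_i)=1$ is a local maximum. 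Using the Fermi expansion (\ref{exp:g}), $g_i\to g_0$ in $C^2$ and $\ei\to 0$, the rescaled metric converges to the Euclidean one in $C^2_{loc}$ and $v_i$ solves a rescaled form of (\ref{eq:blow-up}) whose coefficients converge to those of the model problem (\ref{eq:U}) with $c=-(n-2)T$. The isolated bound (\ref{des:isolado}) rescales to $v_i(y)\le C|y-y_i|^{-2/(p_i-1)}$, and $2/(p_i-1)\to(n-2)/2$.

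Granting for the moment that $\{T_i\}$ is bounded, pass to a subsequence with $T_i\to T_\infty$. The rescaled bound together with Harnack (Lemma \ref{Harnack}) and standard elliptic estimates give $v_i\to U$ in $C^2_{loc}(\overline{\mathbb{R}^n_+})$, where $U\ge 0$ solves (\ref{eq:U}), $U(0,\dots,0,T_\infty)=1$, and $U(y)=O(|y|^{-(n-2)/2})$ at infinity. Passing to the limit in the local maximum relation shows that $y_\infty:=(0,\dots,0,T_\infty)$ is a local, hence global, maximum of $U$ with $U(y_\infty)=1$, so in particular $U\not\equiv 0$. By the Liouville-type classification of (\ref{eq:U}) (the family (\ref{fam:U}), see \cite{li-zhu}), $U$ is one of the standard bubbles; its horizontal centre must be $0$, the normalisation $\max U=1$ fixes its dilation parameter, and the resulting maximum point is $(0,\dots,0,\max(0,T))$. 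Comparing with $y_\infty$ gives $T_\infty=\max(0,T)$. For $T\ge 0$ this is exactly $\lim_i T_i=T$.

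For $T<0$ we have $T_\infty=0$, and we upgrade this to $x_i\in\partial M$. With $\lambda=1/(1+T^2)$ the limit is $U(y)=\big(\lambda/(\lambda^2+|\bar y|^2+(y_n-\lambda T)^2)\big)^{(n-2)/2}$, whose normal derivative is $\partial_{y_n}U(y)=-(n-2)\lambda^{(n-2)/2}(y_n-\lambda T)\big(\lambda^2+|\bar y|^2+(y_n-\lambda T)^2\big)^{-n/2}$, strictly negative on $\overline{\mathbb{R}^n_+}$ because $\lambda T<0$. By $C^2_{loc}$ convergence, $\partial_{y_n}v_i<0$ on $B^+_1$ for all large $i$. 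Since $y_i\to 0$ lies in $B^+_1$ and is a critical point of $v_i$, the case $T_i>0$ (which would force $\partial_{y_n}v_i(y_i)=0$) is impossible; hence $T_i=0$, i.e. $x_i=\bar x_i\in\partial M$, for $i$ large.

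The main obstacle is the boundedness of $\{T_i\}$, that is, excluding the escape of the bubble into the interior ($T_i\to\infty$). This is delicate precisely because the nonlinear boundary term is washed out of every naive local limit: re-centring $v_i$ at $y_i$ produces the interior bubble $(1+|\cdot|^2)^{-(n-2)/2}$ on all of $\mathbb{R}^n$ with the boundary sent to infinity, while keeping the origin on $\partial\mathbb{R}^n_+$ forces $v_i\to 0$ near the boundary and again kills the boundary condition; a rescaling by $t_i$ only recovers the Neumann Green's function, independent of $c$. The plan is therefore to argue by contradiction on a \emph{fixed} half-ball $B^+_\rho(\bar x_i)\subset M$: assuming $T_i\to\infty$, one plays the concentration of $u_i$ at the interior point $x_i$ against the boundary mean-curvature contribution through the Pohozaev identity of Proposition \ref{Pohozaev}, which should pin the rescaled height at a finite value; for $T<0$ one may alternatively exploit the sign $c=-(n-2)T>0$ of the boundary condition through a Hopf-type comparison near $\partial M$. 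This step follows the analysis of isolated boundary blow-up in \cite{han-li1}, to which the statement refers.
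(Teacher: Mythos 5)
Your limit-identification argument (the second and third paragraphs of your proposal) is essentially the paper's own proof. The paper reduces, without loss of generality, to two cases --- interior critical points, or boundary points with $\partial u_i/\partial\eta(x_i)\le 0$ --- and then reads the conclusion off the explicit formula $\partial U_T/\partial y_n(\xi_i)=(2-n)U_T(\xi_i)^{\frac{n}{n-2}}(T_i-T)$ in the interior case, and off the sign of the inward normal derivative of the limit bubble in the boundary case; your maximum-point comparison and your strict-negativity argument excluding interior maxima when $T<0$ are the same computations in different packaging. One caution: a local maximum of $v_i$ need not converge to a local maximum of $U$ if the maximizing radii shrink, so you should pass to the limit only the first-order conditions ($\nabla v_i(y_i)=0$ in the interior case; vanishing tangential gradient and $\partial_{y_n}v_i(y_i)\le 0$ in the boundary case), which survive $C^2_{loc}$ convergence and suffice to pin down the bubble parameters --- this is exactly how the paper phrases it.

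The genuine weakness is the boundedness of $\{T_i\}$, which you correctly isolate as the main obstacle but whose proposed mechanism is misdirected. The paper's proof rescales by the distance itself, $\varphi_i(z)=d_i^{\frac{2}{p_i-1}}u_i\big(\tilde\psi_i^{-1}(d_iz)\big)$ with $d_i=d_{g_i}(x_i,\partial M)$, and follows \cite[Lemma 2.1]{han-li1}: if $T_i\to\infty$, the origin becomes an \emph{interior} isolated blow-up point of $\varphi_i$ sitting at unit distance from the rescaled boundary; away from it $\varphi_i(0)\varphi_i\to a|z|^{2-n}+b(z)$, the superlinear boundary term dies in this limit leaving a homogeneous Neumann condition on the unit-distance boundary, so reflection forces $b(0)>0$, while the Pohozaev identity of Proposition \ref{Pohozaev} (whose right-hand side vanishes in this rescaled limit) forces $b(0)\le 0$ --- a contradiction. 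This is precisely the scale you dismissed: you wrote that rescaling by $t_i$ ``only recovers the Neumann Green's function, independent of $c$'', but that Neumann structure is the very source of the contradiction, and independence of $c$ is harmless since the boundedness claim is not quantitative in $c$. By contrast, your plan of running Pohozaev on a \emph{fixed} half-ball cannot be executed at this stage: to control the boundary terms on $S^+_\rho$ one needs the limit $u_i(x_i)u_i\to G$, i.e.\ the isolated-simple structure of Proposition \ref{estim:simples} and the sign restriction of Theorem \ref{cond:sinal}, which come after (and depend on) the present lemma, so that route is circular; moreover ``the boundary mean-curvature contribution'' plays no role, as the relevant arguments are carried out with $h_{g_i}=0$.
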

\bp 
We first prove that the sequence $\{T_i\}$ is bounded.
Assume by contradiction that $T_i\to\infty$ and let $\tilde\psi_i$ be local coordinates centred at $x_i$. Then we define the rescaled function
$$
\varphi_i(z)=d_{g_i}(x_i,\partial M)^{\frac{2}{p_i-1}} u_i \big(\tilde\psi_i^{-1}(d_{g_i}(x_i,\partial M)\, z)\big)
$$
and follow the lines of \cite[Lemma 2.1]{han-li1} to reach a contradiction by making use of the Pohozaev identity Proposition \ref{Pohozaev}. 

Without loss of generality we can assume that either the $x_i$ are interior critical points of $u_i$ or $\{x_i\}\subset\partial M$ satisfies $\partial u_i/\partial \eta(x_i)\leq 0$. 

For large $i$, choose $\bar x_i$ to be the point on $\partial M$ closest to $x_i$ and let $\psi_i: B^+_{\delta'}\to M$ denote Fermi coordinates centred at $\bar{x}_i$. We set $\xi_i=\epsilon_i^{-1}\psi_i(x_i)\in\mathbb R^n_+$ and observe that $\xi_i=(0,...,0,T_i)$.
Setting $v_i(y)=\ei^{\frac{2}{p_i-1}}u_i(\psi_i(\ei y))$ for $y\in \Beilinha$ we know that $v_i$ satisfies 
\begin{align*}
\begin{cases}
L_{\hat{g}_i}v_i+n(n-2)v_i^{p_i}=0,&\text{in}\:\Beilinha,
\\
B_{\hat{g}_i}v_i-(n-2)Tv_i^{\frac{p_i+1}{2}}=0,&\text{on}\:\Deilinha,
\end{cases}
\end{align*}
where $\hat{g}_i$ is the metric with coefficients $(\hat{g}_i)_{kl}(y)=(g_i)_{kl}(\psi(\ei y))$.

In the case $\{x_i\}\subset M\backslash\partial M$, because the Fermi coordinates are centred at $\bar x_i$, by Proposition \ref{form:bolha} we have that $v_i\to U_T$ locally in $C^2$. In particular, 
$$
|\nabla v_i-\nabla U_T|(\xi_i)\to 0.
$$
Observe that $\partial U_T/\partial y_n(\xi_i)=(2-n)U_T(\xi_i)^{\frac{n}{n-2}}(T_i-T)$. 
Since the $x_i$ are critical points,
$$
(n-2)U_T(\xi_i)^{\frac{n}{n-2}}(T_i-T)=-\frac{\partial U_T}{\partial y_n}(\xi_i)=\big(\frac{\partial v_i}{\partial y_n}-\frac{\partial U_T}{\partial y_n}\big)(\xi_i)\to 0,\:\:\text{as}\:i\to\infty,
$$
so that $\lim_{i\to\infty}T_i=T$ and in particular $T\geq 0$. Also, if $T<0$ we necessarily have the case $\{x_i\}\subset \partial M$.

It remains to consider the case $T\geq 0$ with $\{x_i\}\subset\partial M$ and $\partial u_i/\partial \eta(x_i)\leq 0$.  
In this case, $T_i=0$, $\bar x_i=x_i$, $\xi_i=(0,...,0,0)$ and, according to Proposition \ref{form:bolha}, $v_i(y)\to \lambda^{\frac{2-n}{2}} U_T(\lambda^{-1}y)$ locally in $C^2$, where $\lambda=1/(1+T^2)$. Observe that 
$$
\frac{\partial}{\partial y_n}\Big|_{y=0}\lambda^{\frac{2-n}{2}} U_T(\lambda^{-1}y)=(n-2)U_T(0)^{\frac{n}{n-2}}\lambda T.
$$
Since 
$$
\Big|\frac{\partial v_i}{\partial y_n}-\frac{\partial}{\partial y_n}\lambda^{\frac{2-n}{2}} U_T(\lambda^{-1}y)\Big|(0)\to 0,
$$
the only possibility is $T=0$  and in particular $\lim_{i\to\infty}T_i=T$. 
\ep

A basic result for isolated simple blow-up point is stated as follows (see \cite[Proposition 1.4]{han-li1}).
\begin{proposition}\label{estim:simples}
	Let $x_i\to x_0$ be an isolated simple blow-up point for $\{u_i\}$, with $g_i\to g_0$. Then there exist $C,\delta>0$ such that
	\\\\
	(a) $u_i(x_i)u_i(x)\leq C\, d_{g_i}(x,x_i)^{2-n}$\:\:\: for all $x\in B_{\delta}(x_i)\backslash\{x_i\}$;
	\\\\
	(b) $u_i(x_i)u_i(x)\geq C^{-1}G_i(x)$\:\:\: for all $x\in B_{\delta}({x}_i)\backslash B_{r_i}({x}_i)$, where $G_i$ is the Green's function so that
	
\begin{align}
	\begin{cases}
	L_{g_i}G_i=0, &\text{in}\; B_{\delta}({x}_i)\backslash\{{x}_i\},\notag
	\\
	G_i=0, &\text{on}\; S_{\delta}({x}_i)\cap (M\backslash \partial M),\notag
	\\
	B_{g_i}G_i=0, &\text{on}\;(B_{\delta}({x}_i)\backslash\{{x}_i\} )\cap \partial M, \: \text{if} \: (B_{\delta}({x}_i)\backslash\{{x}_i\} )\cap \partial M\neq \emptyset, \notag 
	\end{cases}
	\end{align}
	and $d_{g_i}(x,{x}_i)^{n-2}G_i(x)\to 1$, as $x\to x_i$. Here, $r_i$ is defined as in Remark \ref{rk:def:equiv}.
\end{proposition}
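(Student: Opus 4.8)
The plan is to follow \cite[Proposition 1.4]{han-li1}, reducing both inequalities to statements about the spherical average $\bar u_i(r)$ and then combining the bubble profile near $x_i$ with a maximum--principle comparison that crucially uses the isolated--simple hypothesis. By the Harnack inequality of Lemma \ref{Harnack}, on every dyadic annulus $\{s/2<d_{g_i}(\cdot,x_i)<2s\}$ the quantities $\max u_i$, $\min u_i$ and $\bar u_i(s)$ are mutually comparable, so it is enough to prove (a) and (b) with $u_i(x)$ replaced by $\bar u_i(d_{g_i}(x,x_i))$, and I will freely pass between pointwise values and averages. It is convenient to work with the rescaled functions $v_i(y)=\ei^{2/(p_i-1)}(u_i\circ\psi_i)(\ei y)$ of Proposition \ref{form:bolha}, for which, using $u_i(x_i)^2\sim\ei^{2-n}$, the bound (a) amounts to the single estimate $v_i(y)\le C|y|^{2-n}$ on the annulus $R_i\le|y|\le\delta\ei^{-1}$.

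First I would dispose of the inner region. For $d_{g_i}(x,x_i)\le r_i=R_i\ei$, Proposition \ref{form:bolha} gives $u_i(x_i)^{-1}u_i\to U_T$ (or the interior bubble) in $C^2$, and since $U_T(y)\le C(1+|y|)^{2-n}$ one obtains both $u_i(x_i)u_i(x)\le C\,d_{g_i}(x,x_i)^{2-n}$ and, matching the $d^{2-n}$ singularity, $u_i(x_i)u_i(x)\ge C^{-1}G_i(x)$ on the inner sphere $S_{r_i}(x_i)$. In particular (a) already holds at $d_{g_i}(x,x_i)=r_i$.

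The heart of the matter, and the step I expect to be the main obstacle, is to propagate the sharp decay rate $d^{2-n}$ across the whole annulus $r_i\le d_{g_i}(x,x_i)\le\delta$ rather than the much weaker rate $d^{-2/(p_i-1)}$ furnished by the isolated--blow-up bound of Definition \ref{def:isolado}. The mechanism is as follows: once $u_i$ has concentrated, the isolated bound gives $v_i(y)^{p_i-1}\le C|y|^{-2}$, so on the annulus $v_i$ solves a linear equation $\hat L_{g_i}v_i+O(|y|^{-2})v_i=0$ whose potential is a genuine perturbation of the Euclidean operator; its homogeneous solutions behave like $|y|^{2-n}$ together with a slowly decaying term, and the content of the isolated--\emph{simple} hypothesis is precisely to kill the latter. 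Quantitatively, writing $w_i(r)=r^{2/(p_i-1)}\bar u_i(r)$, Remark \ref{rk:def:equiv} gives $w_i'(r)<0$ on $[r_i,\delta)$, so a non-negligible slowly decaying component would eventually force $w_i$ to increase, a contradiction. I would implement this rigorously through a maximum--principle comparison on the annulus: comparing $v_i$ with barriers of the form $A|y|^{2-n}+B|y|^{-(n-2)/2}$ adapted to $L_{g_i}$ (the lower-order terms absorbing the $O(|z|^2)$ metric errors of \eqref{exp:g}), one feeds in the bubble bound on $S_{r_i}(x_i)$ and the isolated bound on $\{|y|=\delta\ei^{-1}\}$, and uses $w_i'<0$ to force the coefficient of the $|y|^{-(n-2)/2}$ barrier to be $o(1)$. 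This yields $v_i(y)\le C|y|^{2-n}$, i.e. (a). Along the way the boundary condition on $D_{\delta\ei^{-1}}$ must be handled, controlling the sign of the boundary term through $T=-c/(n-2)$ and the position of $x_i$ relative to $\partial M$ as in Lemma \ref{lem:bddTi}.

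Finally, for the lower bound (b) I would compare $W_i:=u_i(x_i)u_i$ with $G_i$ on the annulus $B_\delta(x_i)\setminus B_{r_i}(x_i)$. Here $W_i\ge0$ is a supersolution, $L_{g_i}W_i=-n(n-2)u_i(x_i)f_i^{-\tau_i}u_i^{p_i}\le0$, while $L_{g_i}G_i=0$; on $S_{r_i}(x_i)$ the profile of Proposition \ref{form:bolha} gives $W_i\ge C^{-1}G_i$, on $S_\delta(x_i)\cap(M\setminus\partial M)$ we have $G_i=0\le W_i$, and the boundary operators $B_{g_i}$ on $\partial M$ are treated as above. The comparison principle then gives $W_i\ge C^{-1}G_i$ throughout the annulus, which together with the inner region and the normalization $d_{g_i}(x,x_i)^{n-2}G_i(x)\to1$ is exactly (b).
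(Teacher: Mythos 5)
You should note at the outset that the paper gives no proof of this proposition at all: it is quoted from \cite[Proposition 1.4]{han-li1}, so the comparison must be with that standard proof. Your skeleton (Harnack reduction to spherical averages, bubble matching at radius $r_i$, a maximum-principle argument on the annulus, comparison with $G_i$ for (b)) is indeed the skeleton of Han-Li's argument, but the central step as you describe it would fail. The isolated-blow-up bound only gives $v_i^{p_i-1}\le C|y|^{-2}$ with a fixed but possibly \emph{large} constant, and a critical potential $\Lambda|y|^{-2}$ with large $\Lambda$ is not ``a genuine perturbation of the Euclidean operator'': the radial exponents $\mu$ solve $\mu(\mu-(n-2))+\Lambda=0$, hence are nowhere near $\{0,\,n-2\}$ and become complex (oscillatory solutions) as soon as $\Lambda>(n-2)^2/4$, and the maximum principle fails on long annuli. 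The decisive use of the isolated \emph{simple} hypothesis is different from, and prior to, the role you assign it: the monotonicity $w_i'<0$ on $[r_i,\delta)$ from Remark \ref{rk:def:equiv}, fed with the bubble value $\bar u_i(r_i)\sim u_i(x_i)R_i^{2-n}$ from Proposition \ref{form:bolha}, yields $w_i(r)\le w_i(r_i)\le CR_i^{2/(p_i-1)+2-n}=o(1)$, i.e.\ the scale-invariant \emph{smallness} $v_i(y)\le o(1)|y|^{-2/(p_i-1)}$, hence a potential $o(1)|y|^{-2}$. Only after this smallness is established can any barrier argument begin; your plan never derives it.

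Even granting that smallness, your barrier $A|y|^{2-n}+B|y|^{-(n-2)/2}$ cannot produce the sharp estimate (a). The function $|y|^{2-n}$ is exactly harmonic, so it has zero margin and cannot absorb the positive potential term nor the metric errors of \eqref{exp:g}; and the component $B|y|^{-(n-2)/2}$, even with coefficient $o(1)$, dominates $|y|^{2-n}$ at the outer scale $|y|\sim\delta\epsilon_i^{-1}$ (their ratio is $|y|^{(n-2)/2}\to\infty$), so $v_i\le C|y|^{2-n}$ does not follow. The actual proof is necessarily two-staged: first an intermediate estimate with loss, built from strict supersolutions $|y|^{2-n+\sigma}$ and $|y|^{-\sigma}$, and then an upgrade to the exponent $2-n$ via the Green representation of $u_i$ together with an absorption argument for the term $u_i(x_i)\bar u_i(\delta)$. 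Moreover, since the proposition allows subcritical $p_i$, Han-Li also need a Pohozaev-identity step giving $u_i(x_i)^{\tau_i}\le C$; without it your opening reduction ``$u_i(x_i)^2\sim\epsilon_i^{2-n}$'' is unjustified, because $\epsilon_i^{2-n}=u_i(x_i)^{(p_i-1)(n-2)/2}$ differs from $u_i(x_i)^2$ by the factor $u_i(x_i)^{-\tau_i(n-2)/2}$. Finally, in (b) your comparison breaks precisely when $c<0$ (i.e.\ $T>0$): there $B_{g_i}W_i=-c\,\bar f_i^{-\tau_i/2}u_i(x_i)u_i^{(p_i+1)/2}>0$, so $W_i$ is \emph{not} a supersolution for the pair $(L_{g_i},B_{g_i})$, and ``treated as above'' hides the required fix, namely running the maximum principle for the linearized pair $(L_{g_i}+n(n-2)f_i^{-\tau_i}u_i^{p_i-1},\ B_{g_i}+c\,\bar f_i^{-\tau_i/2}u_i^{(p_i-1)/2})$, which is legitimate because $u_i$ is a positive solution of it, together with a comparison of the corresponding Green's functions.
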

\begin{remark}\label{rk:estim:simples}
Suppose that $x_i\to x_0$ is an isolated simple blow-up point for $\{u_i\}$. 
Set $$v_i(y)=\ei^{\frac{2}{p_i-1}}(u_i\circ\psi_i)(\e y)\,,\:\:\:\:\:\text{for}\: y\in B_{\delta\ei^{-1}}(x_i)\,,$$
where we are using geodesic coordinates $\psi_i$ centred at $x_i$. 
Then, as a consequence of  Propositions \ref{form:bolha} and \ref{estim:simples}, we see that $v_i\leq C(1+|y|)^{2-n}$ in $B_{\delta\ei^{-1}}(x_i)$.
\end{remark}


\section{Blow-up estimates}\label{sec:blowup:estim}
In this section we give a point-wise estimate for a blow-up sequence in a neighbourhood of an isolated simple blow-up point. 
As discussed in Section \ref{sec:intr}, in order to simplify our presentation we will only prove Theorem \ref{compactness:thm} as its extension to Theorem \ref{compactness:thm:general} can be done by a standard procedure (see for example \cite{almaraz-queiroz-wang}). To that reason, from this section to the end of our paper we will restrict our analysis to the case of critical exponent $p=(n+2)/(n-2)$. We will also fix the dimension $n=3$,  which is the content of our main results, so that $p=5$, $(p+1)/2=3$ and $T=-c$.

Let  $x_i\to x_0$ be an isolated simple blow-up point for the sequence $\{u_i\}$ of solutions to the equations \eqref{eq:blow-up}. In this section we assume $h_{g_i}=0$. This is because the results of this section will be called in the proof of Theorem \ref{compactness:thm} after a conformal change of the backgroud metric is applied. We will also assume  $x_0\in \partial M$ as the proof of Theorem \ref{compactness:thm}  in the case $x_0\in M\backslash\partial M$ follows from classical works (see for example \cite{li-zhu2}). 

As in the proof of  Lemma \ref{lem:bddTi}, for large $i$ choose $\bar x_i$ to be the point on $\partial M$ closest to $x_i$ and let $\psi_i: B^+_{\delta'}\to M$ denote Fermi coordinates centred at $\bar{x}_i$. We set $\xi_i=\epsilon_i^{-1}\psi_i(x_i)\in\mathbb R^n_+$ and write $\xi_i=(0,...,0,T_i)$ for a bounded sequence $T_i\geq 0$. We know that $\lim_{i\to\infty}T_i=T$ when $T\geq 0$, and $T_i=0$ when $T<0$, according to that lemma.
Setting $v_i(y)=\ei^{\frac{1}{2}}u_i(\psi_i(\ei y))$ for $y\in \Beilinha$ we know that $v_i$ satisfies 
\begin{align}\label{eq:vi'}
\begin{cases}
L_{\hat{g}_i}v_i+3v_i^{5}=0,&\text{in}\:\Beilinha,
\\
B_{\hat{g}_i}v_i-Tv_i^{3}=0,&\text{on}\:\Deilinha,
\end{cases}
\end{align}
where $\hat{g}_i$ is the metric with coefficients $(\hat{g}_i)_{kl}(y)=(g_i)_{kl}(\psi(\ei y))$. 

According to Proposition \ref{form:bolha} and Lemma \ref{lem:bddTi}, $v_i$ converges locally in $C^2$ norm to $U_T$ in the case $T\geq 0$, or to $\lambda^{\frac{2-n}{2}}U_T(\lambda^{-1}y)$ in the case $T<0$. 
We will consider only the case $T\geq 0$ in the rest of this section. For the case $T<0$, all the $U_T$ must be replaced by $\lambda^{\frac{2-n}{2}}U_T(\lambda^{-1}y)$ and the proof follows the same lines.

Let $r\mapsto 0\leq\chi(r)\leq 1$ be a smooth cut-off function such that $\chi(r)\equiv 1$ for $0\leq r\leq \delta'$ and $\chi(r)\equiv 0$ for $r>2\delta'$. 
We set $\chi_{\e}(r)=\chi(\e r)$.
Thus,  $\chi_{\e}(r)\equiv 1$ for $0\leq r\leq \delta'\e^{-1}$ and $\chi_{\e}(r)\equiv 0$ for $r>2\delta'\e^{-1}$.

The following result, although stated for dimension three, holds in any dimension $n\geq 3$ with the obvious modifications:

\begin{proposition}\label{Linearized}
Assume that $T\geq 0$. For every $i$ there is a solution $\phi_i$ of 
\begin{align}
	\begin{cases}\label{linear:8}
		\Delta\phi_{i}(y)+15U_T^4\phi_{i}(y)=-2\chi_{\ei}(|y|)\ei y_3 \pi_{kl}(\bar x_i)\displaystyle\frac{\d^2 U_T(y)}{\d y_k\d y_l}\,,&\text{for}\:y\in\R^3_+\,,
		\\
		\displaystyle\frac{\d\phi}{\d y_3} (\bar y)-3T U_T^2\phi_{i}(\bar{y})=0\,,&\text{for}\:\bar{y}\in\d\R^3_+\,,
	\end{cases}
	\end{align} 
where $\Delta$ stands for the Euclidean Laplacian, satisfying 
\begin{equation}\label{estim:phi'}
	|\nabla^r\phi_i|(y)\leq C\ei |\pi_{kl}(\bar x_i)|(1+|y|)^{-r}\,,\:\:\:\:\text{for}\: y\in\R^3_+\,,\,r=0,1 \:\text{or}\:2,
\end{equation} 
and
\begin{equation}\label{hip:phi}
	\phi_i(\xi_i)=1-U_T(\xi_i)\,,\:\:\frac{\d\phi_i}{\d y_1}(\xi_i)=\frac{\d\phi_{i}}{\d y_{2}}(\xi_i)=0.
\end{equation}
\end{proposition}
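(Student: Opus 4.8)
The plan is to treat \eqref{linear:8} as an inhomogeneous problem for the linearized operator $\mathcal L\phi:=\Delta\phi+15U_T^4\phi$ under the Robin condition $\partial\phi/\partial y_3-3TU_T^2\phi=0$, and to build $\phi_i$ in two stages: first a kernel-orthogonal solution carrying the decay \eqref{estim:phi'}, then a correction by an element of $\ker\mathcal L$ enforcing \eqref{hip:phi}. Integrating by parts, $\mathcal L$ with this boundary condition is formally self-adjoint on $L^2(\mathbb R^3_+)$, since the Robin boundary term $\int_{\partial}3TU_T^2(vu-uv)$ is symmetric; and by Lemma \ref{classifLinear} every decaying solution of the homogeneous system \eqref{linear:homog} lies in $\Span\{J_1,J_2,J_3\}$. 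The Fredholm alternative then says \eqref{linear:8} is solvable exactly when its right-hand side is $L^2$-orthogonal to $\Span\{J_1,J_2,J_3\}$.

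The first point is to verify this orthogonality. Write $f_i$ for the right-hand side of the first line of \eqref{linear:8}; it is a radial cut-off times $y_3\,\pi_{kl}(\bar x_i)\partial_k\partial_l U_T$ with $k,l\in\{1,2\}$, and $U_T$ is even in each of $y_1,y_2$. A parity count in $(y_1,y_2)$ annihilates $\int_{\mathbb R^3_+}f_iJ_1$ and $\int_{\mathbb R^3_+}f_iJ_2$, as well as the off-diagonal ($k\neq l$) part of $\int_{\mathbb R^3_+}f_iJ_3$. The surviving diagonal part of the pairing with $J_3$ equals $\bigl(\pi_{11}(\bar x_i)+\pi_{22}(\bar x_i)\bigr)$ times a common radial integral, and this prefactor is $\operatorname{tr}\pi(\bar x_i)=2h_{g_i}(\bar x_i)=0$ by the standing assumption $h_{g_i}=0$. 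Hence $f_i\perp\ker\mathcal L$; this is precisely the trace-free structure that the hypothesis $h_{g_i}=0$ supplies.

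The principal obstacle is the decay estimate \eqref{estim:phi'}, which I would obtain as a weighted a priori bound for $\mathcal L$ proved by contradiction and then use to produce the solution itself. Solving \eqref{linear:8} on an exhausting family of half-balls $B_R^+$ with the Robin condition on $D_R$ and zero data on $S_R^+$, kept orthogonal to $\Span\{J_1,J_2,J_3\}$, yields approximate solutions; if the bound $|\phi|\le C\ei|\pi_{kl}(\bar x_i)|(1+|y|)^{-r}$ failed with a uniform $C$, then a standard normalization-and-limit argument would produce a nonzero solution of \eqref{linear:homog} decaying at infinity, to which Lemma \ref{classifLinear} applies, forcing it into $\Span\{J_1,J_2,J_3\}$ and contradicting the orthogonality carried through the construction. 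Two features make this delicate: the source decays only like $\ei(1+|y|)^{-2}$, the borderline rate for the Newtonian potential in dimension three, and the cut-off region $\{|y|\lesssim\delta'\ei^{-1}\}$ expands with $i$; it is exactly the degree-two angular structure of $\pi_{kl}\partial_k\partial_l U_T$ (again from the trace-free condition) that suppresses the monopole part of the source and yields a bounded, rather than logarithmically growing, solution. Passing $R\to\infty$ delivers a kernel-orthogonal solution $\phi_i^0$ of \eqref{linear:8} with $|\phi_i^0|\le C\ei|\pi_{kl}(\bar x_i)|$, and interior and boundary Schauder estimates on dyadic annuli upgrade this to the gradient and Hessian bounds $r=1,2$.

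Finally I would restore \eqref{hip:phi} by setting $\phi_i=\phi_i^0+c_1J_1+c_2J_2+c_3J_3$. Evaluating the $J_m$ and their tangential derivatives at $\xi_i=(0,0,T_i)$ and using that $U_T$ is even in $y_1,y_2$, the three conditions of \eqref{hip:phi} form a linear system whose matrix has the single nonzero entries $J_3(\xi_i)$, $\partial_1^2U_T(\xi_i)$, $\partial_2^2U_T(\xi_i)$, so that its determinant is their product; a short computation gives $J_3(\xi_i)=\tfrac12U_T(\xi_i)\bigl(1-T_i^2+T^2\bigr)\bigl(1+(T_i-T)^2\bigr)^{-1}$, whence $J_3(\xi_i)\to\tfrac12$ and $\partial_k^2U_T(\xi_i)\to\partial_k^2U_T(0,0,T)\neq0$ as $i\to\infty$ by Lemma \ref{lem:bddTi} (the Hessian of $U_T$ being nondegenerate at its maximum $(0,0,T)$), so the system is uniformly solvable for large $i$. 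The gradient bound on $\phi_i^0$ gives $c_1,c_2=O\bigl(\ei|\pi_{kl}(\bar x_i)|\bigr)$, while $c_3$ is governed by $1-U_T(\xi_i)=O\bigl((T_i-T)^2\bigr)$; since $J_1,J_2$ decay like $(1+|y|)^{-2}$ and $J_3$ like $(1+|y|)^{-1}$, the added terms respect \eqref{estim:phi'} once the centering defect $T_i-T$ is controlled in terms of $\ei$, which is the information the modified normalization \eqref{hip:phi} is designed to track. With these choices \eqref{hip:phi} holds by construction, completing the argument.
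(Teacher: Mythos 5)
Your proposal shares its two endpoints with the paper's proof: the orthogonality of the source to $\Span\{J_1,J_2,J_3\}$ (parity in $(y_1,y_2)$ plus $\operatorname{tr}\pi(\bar x_i)=2h_{g_i}(\bar x_i)=0$), and the final correction $\phi_i=\phi_i^0+c_1J_1+c_2J_2+c_3J_3$, whose triangular linear system (diagonal entries $J_3(\xi_i)\to\tfrac12$, $\partial_1^2U_T(\xi_i)$, $\partial_2^2U_T(\xi_i)$) you compute correctly. The middle, however, is genuinely different, and that is where your argument has a real gap. The paper never works directly on $\R^3_+$: using Lemma \ref{eigenvalues} it transplants \eqref{linear:8} through the conformal map $F_T$ to the compact cap $\mathbb S^3_T$, where the problem becomes $\Delta_{\mathbb S^3}\bar\phi+3\bar\phi=f_i$ with a Robin condition, the kernel is spanned by the coordinate functions $z_1,z_2,z_3$, and solvability orthogonal to the kernel is a standard fact on a compact manifold with boundary; the decay \eqref{estim:phi'} then comes from a Green's representation formula, and everything is pulled back. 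By contrast, your appeal to ``the Fredholm alternative'' on the noncompact half-space is not justified as stated: $\Delta+15U_T^4$ with the Robin condition is not Fredholm on $L^2(\R^3_+)$ (zero lies in the essential spectrum), so solvability is \emph{not} equivalent to orthogonality without a weighted-space or compactification framework. Your substitute construction also does not work as written: on the half-balls $B_R^+$ the functions $J_a$ violate the Dirichlet condition on $S_R^+$, so ``keeping the approximate solutions orthogonal to $\Span\{J_1,J_2,J_3\}$'' neither guarantees solvability of the truncated mixed problem (which can have its own kernel for some radii) nor passes cleanly to the limit when you try to invoke Lemma \ref{classifLinear} in your blow-up argument. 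These points are repairable---and your observation that the trace-free, degree-two angular structure of the source is what prevents logarithmic growth in dimension three is correct and is the honest reason the uniform bound holds---but the cleanest repair is exactly the paper's conformal compactification, which turns each of these difficulties into a routine statement on $\mathbb S^3_T$.

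Separately, your last step defers rather than closes the estimate on $c_3$. Since $J_1(\xi_i)=J_2(\xi_i)=0$, the first equation of \eqref{hip:phi} forces $c_3J_3(\xi_i)=1-U_T(\xi_i)-\phi_i^0(\xi_i)$ with $1-U_T(\xi_i)\sim\tfrac12(T_i-T)^2$, so for the corrected $\phi_i$ to obey \eqref{estim:phi'} one needs $(T_i-T)^2\le C\ei|\pi_{kl}(\bar x_i)|$, whereas Lemma \ref{lem:bddTi} gives only $T_i\to T$ with no rate. Saying that this control is ``the information the modified normalization \eqref{hip:phi} is designed to track'' is circular: \eqref{hip:phi} is a condition imposed on $\phi_i$, not an estimate on the blow-up sequence, and nothing in your construction bounds the centering defect by $\ei$. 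In fairness, this is also the tersest point of the paper's own proof, which asserts $|c_{a,i}|\le C\ei|\pi_{kl}(\bar x_i)|$ for all $a$ without further comment; a complete treatment must either prove a quantitative version of Lemma \ref{lem:bddTi} or restructure the contradiction argument of Lemma \ref{estim:blowup1} so that $|T_i-T|$ is absorbed into the quantity $\Lambda_i+\ei$ along the way. As written, your proposal leaves this step open.
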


\bp
This proof follows the same lines as Proposition 5.1 of \cite{almaraz3} with some minor modifications and will be sketched here.
It strongly relies on the conformal equivalence $F_T:\mathbb S_T^3\to\mathbb R^3_+$, defined in Section \ref{sec:pre}.

Define, for each $i$,
$$
f_i(F_T^{-1}(y))=-2\chi_{\ei}(|y|)\ei y_3 \pi_{kl}(\bar x_i)\displaystyle\frac{\d^2 U_T(y)}{\d y_k\d y_l}U_T^{-5}(y),\:\:y\in\R^3_+.
$$
According to Lemma \ref{eigenvalues},
$$
z_1\circ F_T^{-1}=-2U_T^{-1}\frac{\partial U_T}{\partial y_1},
\qquad
z_2\circ F_T^{-1}=-2U_T^{-1}\frac{\partial U_T}{\partial y_2},
$$
and 
$$
z_3\circ F_T^{-1}=\frac{2}{\sqrt{{1+T^2}}}U_T^{-1}\big(\frac{1}{2}U_T+y^1\frac{\partial U_T}{\partial y_1}+y^2\frac{\partial U_T}{\partial y_2}+y^3\frac{\partial U_T}{\partial y_3}\big),
$$
so, by symmetry arguments,
$$
\int_{\mathbb S^3_T}z_1f_i=\int_{\mathbb S^3_T}z_2f_i=\int_{\mathbb S^3_T}z_3f_i=0.
$$
 Here, in the last equality we used that ${\text{tr}}(\pi_{kl}(\bar x_i))=2h_{g_i}(\bar x_i)=0.$
As the coordinate functions $z_1,z_2$ and $z_3$ generate the space of solutions of \eqref{linear:3'}, in particular for each $i$ one can find a smooth solution $\bar\phi_{\epsilon_i}$ of  
\begin{equation*}
\begin{cases}
\Delta_{\mathbb S^3} \bar{\phi}_{\epsilon_i} + 3\bar \phi_{\epsilon_i}= f_i\,,&\text{in}\:\mathbb S_T^3\,,\\
\displaystyle\frac{\partial \bar{\phi}_{\epsilon_i}}{\partial \eta}-T\bar{\phi}_{\epsilon_i}=0\,,&\text{on}\:\partial\mathbb  S_T^3\,,
\end{cases}
\end{equation*}
which is $L^2(\mathbb S^3_T)$-orthogonal to $z_1, z_2$ and $z_3$. We then consider the Green's function $G(z,w)$ on $\mathbb S_T^3$ such that
\begin{equation*}
\begin{cases}
\Delta_{\mathbb S^3} G + 3 G= \alpha(z_1w_1+z_2w_2+z_3w_3)\,,&\text{in}\:\mathbb S_T^3\,,\\
\displaystyle\frac{\partial G}{\partial \eta}-TG=0\,,&\text{on}\:\partial\mathbb  S_T^3\,,
\end{cases}
\end{equation*}
where $\alpha=\|z_1\|_{L^2(\mathbb S_T^3)}^{-2}=\|z_2\|_{L^2(\mathbb S_T^3)}^{-2}=\|z_3\|_{L^2(\mathbb S_T^3)}^{-2}$.

Observe that $\bar\phi_i=(2^{\frac{n-2}{2}}U_T\, \bar\phi_{\epsilon_i})\circ F_T^{-1}$ satisfies the required equations \eqref{linear:8} and a Green's representation formula argument gives the required estimates \eqref{estim:phi'}. Finally we choose
$$
\phi_i=\bar\phi_i+c_{1,i}J_1+c_{2,i}J_2+c_{3,i}J_3.
$$
Clearly, the coefficients $c_{1,i}, c_{2,i}$ and $c_{3,i}$ can be chosen so that the equations \eqref{hip:phi} hold, and $\phi_i$ satisfies \eqref{linear:8} because $J_1$, $J_2$ and $J_3$ are solutions to \eqref{linear:homog}. 
As $|c_{a,i}|\leq C\epsilon_i|\pi_{kl}(\bar x_i)|$, for $a=1,2,3$, it is easy to see that each $\phi_i$ satisfies  \eqref{estim:phi'}.

\ep

Now we are ready to obtain some refined estimates at isolated simple blow-up points. 
\begin{lemma}\label{estim:blowup1}
Assume that $T\geq 0$. There exist $\delta, C>0$ such that, for $|y|\leq\delta\ei^{-1}$,
\begin{equation}\label{eq:ei}
|v_i-U_T-\phi_i|(y)\leq C\ei\,.
\end{equation}
\end{lemma}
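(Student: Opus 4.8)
**

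The plan is to prove the estimate by a contradiction/blow-up argument combined with the linear classification in Lemma~\ref{classifLinear}. Set $w_i = v_i - U_T - \phi_i$ and let $\Lambda_i = \max_{|y|\leq \delta\ei^{-1}} |w_i|(y)$. Suppose for contradiction that $\ei^{-1}\Lambda_i \to \infty$ along a subsequence. The natural move is to normalize: define $\tilde w_i = \Lambda_i^{-1} w_i$, so that $|\tilde w_i|\leq 1$ with the maximum $1$ attained at some point $y_i$. I would then derive the linear equation satisfied by $w_i$ and hence by $\tilde w_i$.

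\medskip

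The key computational step is to write down the equation for $w_i$. Since $v_i$ solves \eqref{eq:vi'} and $U_T+\phi_i$ solves, up to error terms, the model problem, subtracting gives
\begin{equation}\notag
\Delta \tilde w_i + 15 U_T^4 \tilde w_i = \Lambda_i^{-1} E_i \quad\text{in } \Bei,
\qquad
\frac{\d \tilde w_i}{\d y_n} - 3T U_T^2 \tilde w_i = \Lambda_i^{-1} \bar E_i \quad\text{on } \Dei,
\end{equation}
where $E_i$ collects several kinds of error: the difference $(L_{\hat g_i}-\Delta)v_i$ coming from the metric expansion \eqref{exp:g}, the nonlinear remainder $3(v_i^5 - U_T^5) - 15U_T^4(v_i-U_T)$, the term $15U_T^4\phi_i$ balanced against the right-hand side of \eqref{linear:8}, and the cut-off error where $\chi_{\ei}\neq 1$; similarly $\bar E_i$ collects the boundary metric and nonlinear errors. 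Here is where I expect the main obstacle: I must show that $\Lambda_i^{-1}E_i \to 0$ (in $L^\infty_{loc}$ or after suitable weighting) and likewise for the boundary term. This requires the pointwise decay \eqref{estim:phi'} for $\phi_i$, the bound $v_i\leq C(1+|y|)^{2-n}$ from Remark~\ref{rk:estim:simples}, the $C^2$ convergence $v_i\to U_T$ from Proposition~\ref{form:bolha}, and careful bookkeeping to check that every error term is genuinely of order $\oo(\ei)$ or is $\OO(\ei)$ times an integrable/bounded profile, so that after dividing by $\Lambda_i \gg \ei$ it vanishes. The construction of $\phi_i$ in Proposition~\ref{Linearized} is precisely designed so that the leading $\OO(\ei)$ interior term is cancelled, which is what makes this work.

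\medskip

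Once the error control is in place, I would pass to the limit. By elliptic estimates up to the boundary, $\tilde w_i$ converges in $C^2_{loc}(\R^n_+)$ to some $\tilde w_\infty$ with $|\tilde w_\infty|\leq 1$, solving the homogeneous linearized problem \eqref{linear:homog}. The decay $v_i\leq C(1+|y|)^{2-n}$ together with the decay of $U_T$ and $\phi_i$ should give $\tilde w_\infty(y) = \OO((1+|y|)^{-\alpha})$ for some $\alpha>0$, so Lemma~\ref{classifLinear} applies: $\tilde w_\infty$ is a linear combination of the $J_a$. The final step is to use the normalization conditions \eqref{hip:phi} to kill this limit. Indeed, \eqref{hip:phi} is engineered so that $w_i(\xi_i)=0$ and $\d_{y_1}w_i(\xi_i)=\d_{y_2}w_i(\xi_i)=0$; since $\xi_i\to(0,\dots,0,T)$, these pass to the limit and force three linear conditions on $\tilde w_\infty$. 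Combined with the fact that the dilation generator $J_n$ direction is controlled (one checks $\tilde w_\infty$ cannot contain a $J_n$ component, using that $v_i(\xi_i)=1$ fixes the scaling), this forces $\tilde w_\infty\equiv 0$, contradicting $|\tilde w_\infty(y_\infty)|=1$ if the maximum point $y_i$ stays bounded. If instead $|y_i|\to\infty$, I would instead invoke a maximum-principle / barrier argument on the annular region near $|y|\sim \delta\ei^{-1}$, using Proposition~\ref{estim:simples}(a) to bound $\tilde w_i$ on the outer boundary and the operator's coercivity there, deriving the contradiction from that direction. Reconciling these two cases (interior maximum versus escaping maximum) is the delicate bookkeeping that completes the proof.
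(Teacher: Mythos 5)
Your overall architecture --- the contradiction hypothesis $\Lambda_i^{-1}\ei\to 0$, the normalization $\tilde w_i=\Lambda_i^{-1}(v_i-U_T-\phi_i)$, passage to a limit solving the homogeneous linearized problem, classification via Lemma~\ref{classifLinear}, and the use of \eqref{hip:phi} to kill the kernel components --- is exactly the paper's. But there is a genuine gap at the step you wave through with ``should give'': you claim the decay $\tilde w_\infty(y)=\OO((1+|y|)^{-\alpha})$ follows from the decay of $v_i$, $U_T$ and $\phi_i$. It does not. Those decays give $|v_i-U_T-\phi_i|(y)\leq C(1+|y|)^{-1}$, but after dividing by $\Lambda_i$ this becomes $|\tilde w_i(y)|\leq C\Lambda_i^{-1}(1+|y|)^{-1}$, which is useless unless $\Lambda_i$ is bounded below --- and nothing in the contradiction hypothesis prevents $\Lambda_i\to 0$ (only $\Lambda_i\gg\ei$ is assumed). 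All you know a priori about $\tilde w_i$ is $|\tilde w_i|\leq 1$, and mere boundedness is not enough to invoke Lemma~\ref{classifLinear} as stated: its hypothesis requires $\alpha>0$, and its proof (removability of the singularity at the pole under $F_T$) genuinely uses this, since a bounded non-decaying solution pulls back to a function on $\mathbb S^3_T$ with a singularity of exactly fundamental-solution order, where removability is no longer automatic.

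The paper fills this hole with the technical core of its proof: a Green's function representation for $w_i$ on $\Bei$ (with boundary integrals over $\Dei$ and $\Sei$), which, combined with the decay estimates on $Q_i$, $\bar Q_i$, $b_i$, $\bar b_i$ and the bound $|w_i|\leq C\Lambda_i^{-1}\ei$ on $|y|=\delta\ei^{-1}$, yields the uniform pointwise estimate $|\tilde w_i(y)|\leq C\left((1+|y|)^{-1}+\Lambda_i^{-1}\ei\right)$ for $|y|\leq\delta\ei^{-1}/2$. This single estimate does both jobs at once: it gives the decay of the limit (via $\Lambda_i^{-1}\ei\to 0$) needed for the classification, and it disposes of your second case of an escaping maximum point, since at $y_i$ it forces $1=|\tilde w_i(y_i)|\leq C\left((1+|y_i|)^{-1}+\Lambda_i^{-1}\ei\right)\to 0$ once $|y_i|\to\infty$. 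Your proposed barrier/maximum-principle argument for that case is not developed; to run it you would have to control the inhomogeneous term and the outer boundary data, i.e.\ essentially re-derive this same estimate. Two smaller points: the paper first disposes of maxima with $|y_i|\geq c\ei^{-1}$ directly from $v_i\leq CU_T$ and $|\phi_i|\leq C\ei$ (this reduction to $|y_i|\leq\delta\ei^{-1}/2$ is what allows the kernel bound $|G_i(x,y)|\leq C|x-y|^{-1}$ to be used); and your worry about a residual $J_n$ component is unfounded --- the three conditions $\tilde w_\infty(\xi)=\partial_{y_1}\tilde w_\infty(\xi)=\partial_{y_2}\tilde w_\infty(\xi)=0$ at $\xi=(0,0,T)$ already annihilate all three kernel elements, because $J_3(\xi)=\frac{1}{2}U_T(\xi)\neq 0$ while $J_1(\xi)=J_2(\xi)=0$, so no extra scaling argument is needed beyond the one already encoded in $w_i(\xi_i)=0$.
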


\begin{proof}
We consider $\delta<\delta'$ to be chosen later and set 
$$\Lambda_i=\max_{|y|\leq \delta\ei^{-1}} |v_i-U_T-\phi_i|(y)=|v_i-U_T-\phi_i|(y_i)\,,$$ 
for some $|y_i|\leq \delta\ei^{-1}$. From Remark \ref{rk:estim:simples}
we know that $v_i(y)\leq CU_T(y)$ for $|y|\leq \delta\ei^{-1}$. Hence, if there exists $c>0$ such that $|y_i|\geq c\ei^{-1}$, then
$$|v_i-U_T-\phi_i|(y_i)\leq C\,|y_i|^{-1}\leq C\,\ei.$$
This already implies the inequality \eqref{eq:ei} for $|y|\leq \delta\ei^{-1}$. Hence, we can suppose that $|y_i|\leq \delta\ei^{-1}/2$. 

Suppose, by contradiction, the result is false. 
Then, choosing a subsequence if necessary, we can suppose that
\begin{equation}
\label{hipLambda}
\lim_{i\to\infty}\Lambda_i^{-1}\ei=0\,.
\end{equation}
Define
$$w_i(y)=\Lambda_i^{-1}(v_i-U_T-\phi_i)(y)\,,\:\:\:\:\text{for}\:\: |y|\leq \delta\ei^{-1}\,.$$
By the equations \eqref{eq:U} and \eqref{eq:vi'}, $w_i$ satisfies
\begin{equation}\label{wi}
\begin{cases}
L_{\hat{g}_i}w_i+b_i w_i=Q_i\,,&\text{in}\:\Bei\,,\\
B_{\hat{g}_i}w_i+\bar b_i w_i=\overline{Q}_i\,,&\text{on}\:\Dei\,,
\end{cases}
\end{equation}
where 
$$b_i=3\frac{v_i^{5}-(U_T+\phi_i)^{5}}{v_i-(U_T+\phi_i)},
\qquad\bar b_i=-T\frac{v_i^{3}-(U_T+\phi_i)^{3}}{v_i-(U_T+\phi_i)},$$
$$Q_i=-\Lambda_i^{-1}\big\{3(U_T+\phi_i)^5-3U_T^5-15U_T^4\phi_i+(L_{\hat g_i}-\Delta)(U_T+\phi_i)-2\chi_{\ei}(|y|)\ei \pi_{kl}(\bar x_i)y_3(\d_k\d_l U_T)(y)\big\},$$
\begin{align*}
\overline{Q}_i&=\Lambda_i^{-1}\left\{T(U_T+\phi_i)^{3}
-TU_T^{3}-3TU_T^2\phi_i-\big(B_{\hat g_i}-\frac{\partial}{\partial y_3}\big)(U_T+\phi_i)\right\}
\\
&=\Lambda_i^{-1}\left\{T(U_T+\phi_i)^{3}
-TU_T^{3}-3TU_T^2\phi_i \right\}.
\end{align*}
Here, the last equality holds because we are using Fermi coordinates and $h_{\hat g_i}=0$ as a consequence of $h_{g_i}=0$.

Observe that, for any function $v$,
\begin{align}
(L_{\hat{g}_i}-\Delta)v(y)&=(\hat{g}^{kl}_i-\delta^{kl})(y)\d_k\d_l v(y)
+(\d_k\hat{g}^{kl}_i)(y)\d_l v(y)\notag
\\
&\hspace{2cm}-\frac{1}{8}R_{\hat{g}_i}(y)v(y)
+\frac{\d_k \sqrt{\det \hat{g}_i}}{\sqrt{\det \hat{g}_i}}\hat{g}_i^{kl}(y)\d_l v(y)\notag
\\
&=2\ei y_3\pi_{kl}(\bar x_i)\d_k\d_l v(y)+O(\ei^2(1+|y|)^{-1})\,.\notag
\end{align}
Hence,
\begin{align}\label{Qi}
Q_i(y)=O\left(\Lambda_i^{-1}\ei^2(1+|y|)^{-3}\right)+O\left(\Lambda_i^{-1}\ei^2(1+|y|)^{-1}\right)\,.
\end{align}
and
\begin{equation}\label{barQi}
\bar{Q}_i(\bar{y})= O\left(\Lambda_i^{-1}\ei^2(1+|\bar{y}|)^{-1}\right)\,.
\end{equation}
Moreover,
\begin{equation}
\label{lim:bi}
b_i\to 5U_T^4\,,\:\bar b_i\to -3U_T^2\,,\:\text{as}\:i\to\infty,\:\:\text{in}\: C^2_{loc}(\Rn)\,,
\end{equation}
and
\begin{equation}
\label{estim:bi}
b_i(y)\leq C(1+|y|)^{-4}\,,\:\:\bar b_i(y)\leq C(1+|y|)^{-2}\,,\:\:\:\:\text{for}\:\: |y|\leq\delta\ei^{-1}\,. 
\end{equation}

Since $|w_i|\leq |w_i(y_i)|=1$, we can use standard elliptic estimates to conclude that $w_i\to w$, in $C_{loc}^2(\mathbb{R}_+^n)$, for some function $w$, choosing a subsequence if necessary. From the identities \eqref{hipLambda}, \eqref{Qi}, \eqref{barQi} and  \eqref{lim:bi}, we see that $w$ satisfies
\begin{equation}\label{w}
\begin{cases}
\Delta w+15U_T^4w=0\,,&\text{in}\:\mathbb{R}_+^3\,,\\
\displaystyle\frac{\d w}{\d y_3}-3T U_T^{2}w=0\,,&\text{on}\:\partial\mathbb{R}_+^3\,.
\end{cases}
\end{equation}

\bigskip
\noindent
{\it{Claim.}} $\:\:w(y)=O((1+|y|)^{-1})$, for $y\in \Rn$.

\vspace{0.2cm}
Choosing $\delta>0$ sufficiently small, we can consider the Green's function $G_i$ for the conformal Laplacian $L_{\hat{g}_i}$ in $\Bei$ subject to the boundary conditions $B_{\hat{g}_i} G_i=0$ on $\Dei$ and $G_i=0$ on $\Sei$. Let $\eta_i$ be the inward unit normal vector to $\Sei$. Then the Green's formula gives
\begin{align}\label{wG}
w_i(y)&=\int_{\Bei}G_i(\xi,y)\left(b_i(\xi)w_i(\xi)-Q_i(\xi)\right) \,dv_{\hat{g}_i}(\xi)
+\int_{\Sei}\frac{\partial G_i}{\partial\eta_i}(\xi,y)w_i(\xi)\,d\sigma_{\hat{g}_i}(\xi)\notag
\\
&\hspace{1cm}
+\int_{\Dei}G_i(\xi,y)\left(\bar b_i(\xi)w_i(\xi)-\overline{Q}_i(\xi)\right)\,d\sigma_{\hat{g}_i}(\xi)\,.
\end{align}
Using the estimates \eqref{Qi}, \eqref{barQi} and \eqref{estim:bi}  in the equation \eqref{wG}, we obtain
\begin{align}
|w_i(y)|
\leq &\:C\int_{\Bei}|\xi-y|^{-1}(1+|\xi|)^{-4}d\xi
+C\Lambda_i^{-1}\ei^2\int_{\Bei}|\xi-y|^{-1}(1+|\xi|)^{-1}d\xi\notag
\\
&+C\int_{\Dei}|\bar{\xi}-y|^{-1}(1+|\bar{\xi}|)^{-2}d\bar{\xi}
+C\Lambda_i^{-1}\ei^2
\int_{\Dei}|\bar{\xi}-y|^{-1}(1+|\bar{\xi}|)^{-1}d\bar{\xi}\notag
\\
&+C\Lambda_i^{-1}\ei\int_{\Sei}|\xi-y|^{-2}d\sigma(\xi)\,,\notag
\end{align}
for $|y|\leq \delta\ei^{-1}/2$. Here, we have used the fact that $|G_i(x,y)|\leq C\,|x-y|^{-1}$ for $|y|\leq \delta\ei^{-1}/2$ and, since $v_i(y)\leq CU_T(y)$, $|w_i(y)|\leq C\Lambda_i^{-1}\ei$ for $|y|=\delta\ei^{-1}$. Hence, 
$$
|w(y)|\leq C\Lambda_i^{-1}\ei^2(\delta\ei^{-1})+C(1+|y|)^{-1}+C\Lambda_i^{-1}\ei^2\log(\delta\ei^{-1})
+C\Lambda_i^{-1}\ei.
$$
This gives
\begin{equation}\label{estim:wi}
|w_i(y)|\leq C\,\left((1+|y|)^{-1}+\Lambda_i^{-1}\ei\right)
\end{equation}
for $|y|\leq \delta\ei^{-1}/2$.
The Claim now follows from the hypothesis (\ref{hipLambda}).

Now, we can use the claim above and Lemma \ref{linear:homog} to see that 
$$w(y)=c_1\frac{\d U_T}{\d y_1}(y)+c_2\frac{\d U_T}{d y_2}(y)
+c_3\left(\frac{1}{2}U_T(y)+y^b\frac{\d U_T}{\d y_b}(y)\right)\,,$$
for some constants $c_1,c_2,c_3$.
It follows from the identities (\ref{hip:phi}) that 
$$w_i(\xi_i)=\frac{\partial w_i}{\partial y_1}(\xi_i)=\frac{\partial w_i}{\partial y_2}(\xi_i)=0.$$ Thus we conclude that $c_1=c_2=c_3=0$. Hence, $w\equiv 0$. Since $|w_i(y_i)|=1$, we have $|y_i|\to\infty$. This, together with the hypothesis (\ref{hipLambda}), contradicts the estimate (\ref{estim:wi}), since $|y_i|\leq \delta\ei^{-1}/2$, and concludes the proof of Lemma \ref{estim:blowup1}.
\ep


\begin{proposition}\label{estim:blowup:compl}
Assume that $T\geq 0$. There exist $C,\delta>0$ such that 
$$
|\nabla^k(v_i-U_T-\phi_i)(y)|\leq C\ei(1+|y|)^{-k}
$$ 
for all $|y|\leq\delta\ei^{-1}$ and $k=0,1,2$.
\end{proposition}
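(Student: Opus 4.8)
The plan is to upgrade the zeroth-order bound of Lemma~\ref{estim:blowup1} to the weighted derivative estimates by a rescaling argument combined with interior and boundary Schauder estimates; the case $k=0$ is exactly \eqref{eq:ei}, so only $k=1,2$ require work. Write $D_i=v_i-U_T-\phi_i$. By the same computation carried out in the proof of Lemma~\ref{estim:blowup1} (now without normalizing by $\Lambda_i$), the difference $D_i$ solves a problem of the form \eqref{wi},
$$
L_{\hat g_i}D_i+b_iD_i=\tilde Q_i\ \text{ in }\Bei, \qquad B_{\hat g_i}D_i+\bar b_iD_i=\tilde{\bar Q}_i\ \text{ on }\Dei,
$$
where $b_i,\bar b_i$ obey the bounds \eqref{estim:bi} and, as in \eqref{Qi}--\eqref{barQi}, one has $|\tilde Q_i|(y)\le C\ei^2(1+|y|)^{-1}$ and $|\tilde{\bar Q}_i|(\bar y)\le C\ei^2(1+|\bar y|)^{-1}$. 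On the fixed region $\{|y|\le 1\}\cap\Rn$ the estimate for $k=1,2$ follows at once from \eqref{eq:ei} and standard interior and boundary elliptic estimates, since there the weight $(1+|y|)^{-k}$ is bounded below; so it remains to treat $1\le |y|\le \delta\ei^{-1}/2$, where the annulus used below stays inside the domain of validity $\Bei$.

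Fix such a $\bar y$ and set $R=|\bar y|$. First I would rescale to unit scale by defining $W(z)=D_i(Rz)$ on the half-annulus $\mathcal A=\{1/2\le|z|\le 2\}\cap\Rn$. Multiplying the interior equation by $R^2$, $W$ solves a uniformly elliptic equation whose leading coefficients are $\hat g_i^{kl}(Rz)=\delta^{kl}+O(\ei R)=\delta^{kl}+O(\delta)$ and whose first-order coefficients, being $R$ times the $O(\ei)$ derivatives of $\hat g_i$, are $O(\delta)$; all of these have uniformly bounded $C^{0,\a}$ norms on $\mathcal A$. The right-hand side becomes $R^2\big(\tilde Q_i-b_iD_i\big)(Rz)$, which by the decay of $\tilde Q_i$, the bound \eqref{estim:bi}, and $|D_i|\le C\ei$ is $O(\ei^2R+\ei R^{-2})=O(\ei)$ because $R\le\delta\ei^{-1}$. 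Likewise the rescaled boundary operator is a bounded oblique (Neumann-type) operator and the rescaled boundary data $R\,(\tilde{\bar Q}_i-\bar b_iD_i)(Rz)$ is $O(\ei)$. Together with $\|W\|_{L^\infty(\mathcal A)}\le C\ei$ from \eqref{eq:ei}, this is the setting to which I would apply boundary (and, when $\dist(\bar y,\d\Rn)\ge R/4$, interior) Schauder estimates on the slightly smaller region $\{3/4\le|z|\le3/2\}\cap\Rn$, obtaining $\|W\|_{C^{2,\a}}\le C\ei$. Evaluating $R^k|\nabla^kD_i|(\bar y)=|\nabla^k W|(\bar y/R)\le C\ei$ and using $R\ge1$ then yields $|\nabla^kD_i|(\bar y)\le C\ei(1+|\bar y|)^{-k}$ for $k=0,1,2$, which is the claim.

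The main obstacle I anticipate is purely the scaling bookkeeping needed to verify that the rescaled data are controlled in H\"older, not merely sup, norm: the Schauder estimate for the second derivatives requires $C^{0,\a}$ control of $R^2\tilde Q_i(R\cdot)$ and $C^{1,\a}$ control of the boundary data, and checking these reduces to bounding H\"older seminorms of $\tilde Q_i$ (hence of the second derivatives of $\phi_i$) on the dyadic shell $\{R/2\le|y|\le2R\}$. These follow from \eqref{estim:phi'} together with the corresponding decay of one further derivative of $\phi_i$, which is available from elliptic regularity for \eqref{linear:8} (or, if one prefers to avoid third-order bounds on $\phi_i$, from a Calder\'on--Zygmund estimate followed by interpolation). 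Once the shells are handled uniformly in $R$, the constants do not deteriorate as $R\to\delta\ei^{-1}$ precisely because the two decaying factors $\ei$ and $R^{-1}$ balance against the factor $R^2$ produced by the rescaling, which is exactly the mechanism converting the quadratic smallness of $\tilde Q_i$ in $\ei$ into the required linear-in-$\ei$ derivative bounds.
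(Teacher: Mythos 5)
Your proposal is correct and takes essentially the same approach as the paper: the paper's entire proof states that the case $k=0$ is Lemma \ref{estim:blowup1} and that the cases $k=1,2$ ``follow from elliptic theory,'' which is precisely the rescaled Schauder argument on dyadic half-annuli that you spell out. The bookkeeping issues you flag (H\"older rather than sup control of the rescaled data, obtained from elliptic regularity for \eqref{linear:8} and standard bootstrapping) are exactly the routine details subsumed in the paper's appeal to elliptic theory, so there is no gap.
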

\bp
The estimate with $k=0$ is Lemma \ref{estim:blowup1}.
The estimates with $k=1,2$ follow from elliptic theory.
\ep

\begin{remark}
	As we have stated before, Proposition~\ref{estim:blowup:compl} holds for $T<0$ when we replace $U_T$ by $\lambda^{\frac{2-n}{2}}U_T(\lambda^{-1}y)$. The proof follows the same lines. 
\end{remark}

\section{The Pohozaev sign restriction and the mass term}\label{sec:sign:restr}

In this section we will first relate the mass type geometric invariant \eqref{def:mass} with the Pohozaev term $P'$ defined in Section \ref{sec:pre}. Then we will prove a sign restriction for an integral term in Proposition \ref{Pohozaev} and some consequences for the blow-up set. 

\begin{lemma}\label{propo:P'}
	Set $r=|z|$. If $\phi(z)=u(z)-r^{-1}$, then
	\begin{align*}
		P'(u,\rho)&=\frac{1}{2}\int_{S^+_{\rho}}\left(\frac{\partial}{\partial r}r^{-1}\phi-r^{-1}\frac{\partial \phi}{\partial r}\right)d\sigma
		\\
		&+\frac{1}{2}\int_{S^+_{\rho}}\left(r\,\Big(\frac{\d\phi}{\d r}\Big)^2-r|d\phi|^2+\frac{\d\phi}{\d r}\Big(\phi+r\frac{\d\phi}{\d r}\Big)\right) d\sigma\,.
	\end{align*}
\end{lemma}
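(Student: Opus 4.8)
The plan is to prove this as a purely pointwise algebraic identity of the integrands on the sphere $S^+_\rho$; no integration by parts and no use of the equation \eqref{main:eq:crit} are needed. Writing $u=r^{-1}+\phi$, I would first record three elementary facts valid on $S^+_\rho$: that $\partial_r(r^{-1})=-r^{-2}$; that $r^{-1}$ has radial gradient, so $|\nabla(r^{-1})|^2=r^{-4}$; and that $\langle\nabla(r^{-1}),\nabla\phi\rangle=-r^{-2}\partial_r\phi$, since the radial gradient pairs only with the radial part of $\nabla\phi$. The dimension enters only here: with $n=3$ the coefficient $(n-2)/2$ appearing in $P'$ equals $1/2$, and the Euclidean fundamental solution is exactly $r^{-1}$, which is why the substitution $\phi=u-r^{-1}$ is the natural one.

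Next I would substitute $u=r^{-1}+\phi$ into each of the three terms of the integrand of $P'$. Explicitly, $\partial_r u=-r^{-2}+\partial_r\phi$, so $|\partial_r u|^2=r^{-4}-2r^{-2}\partial_r\phi+(\partial_r\phi)^2$; the orthogonal splitting of $du$ into its radial and tangential parts together with the facts above gives $|du|^2=r^{-4}-2r^{-2}\partial_r\phi+|d\phi|^2$; and $u\,\partial_r u=-r^{-3}+r^{-1}\partial_r\phi-r^{-2}\phi+\phi\,\partial_r\phi$. Plugging these into $\tfrac12 u\,\partial_r u-\tfrac r2|du|^2+r|\partial_r u|^2$ and collecting terms, the three $r^{-3}$ contributions cancel, and the surviving terms organize into $-\tfrac12 r^{-1}\partial_r\phi-\tfrac12 r^{-2}\phi+\tfrac12\phi\,\partial_r\phi-\tfrac r2|d\phi|^2+r(\partial_r\phi)^2$.

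Finally I would expand the claimed right-hand side and check it matches. The first integrand is $\tfrac12\bigl(\partial_r(r^{-1})\,\phi-r^{-1}\partial_r\phi\bigr)=-\tfrac12 r^{-2}\phi-\tfrac12 r^{-1}\partial_r\phi$, and the second, after writing $\partial_r\phi\,(\phi+r\,\partial_r\phi)=\phi\,\partial_r\phi+r(\partial_r\phi)^2$, equals $r(\partial_r\phi)^2-\tfrac r2|d\phi|^2+\tfrac12\phi\,\partial_r\phi$. Adding the two reproduces exactly the expression obtained in the previous paragraph, so the integrands agree pointwise on $S^+_\rho$ and integrating over $S^+_\rho$ yields the stated formula for $P'(u,\rho)$. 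There is no genuine obstacle beyond careful bookkeeping; the one point to handle correctly is the orthogonal decomposition of $du$ relative to the radial direction, since it is precisely this radial/tangential separation that collapses the cross term $\langle\nabla(r^{-1}),\nabla\phi\rangle$ to $-r^{-2}\partial_r\phi$ and makes the singular $r^{-3}$ terms cancel.
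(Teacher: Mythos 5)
Your proof is correct and is essentially the paper's own argument: the paper likewise treats this as a direct pointwise identity, writing the integrand of $2P'$ as $(\partial_r u)(u+r\partial_r u)+r\big((\partial_r u)^2-|du|^2\big)$ and substituting $u=r^{-1}+\phi$, with the radial/tangential splitting handling the cross terms exactly as you do. Your bookkeeping (including reading $\frac{\partial}{\partial r}r^{-1}\phi$ as $(\partial_r r^{-1})\phi$ and noting that $(n-2)/2=1/2$ since $n=3$) matches the paper's computation term for term.
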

\bp
Direct calculations give
\begin{align*}
	u\d_r u&-r|du|^2+2r(\d_r u)^2
	=(\d_r u)\Big(u+r\d_r u\Big)+r\Big((\d_r u)^2-|du|^2\Big)
	\\
	&=\d_r r^{-1}\phi-r^{-1}\d_r\phi+\phi\d_r\phi+r(\d_r\phi)^2+r\big((\d_r\phi)^2-|d\phi|^2\big),
\end{align*}
from which the result follows.
\ep

The asymptotically flat manifolds we work with in this paper come from the stereographic projection of compact manifolds with boundary. Inspired by Schoen's approach \cite{schoen1} to the classical Yamabe problem, this  projection is defined by means of a Green's function with singularity at a boundary point. Since we do not have the same control of the Green's function expression we do in the case of manifolds without boundary, the relation with \eqref{def:mass} is obtained by means of an integral defined in \cite{brendle-chen}. This is stated in the next proposition. 
\begin{proposition}\label{propo:I:mass}
	Let $(M,g)$ be a compact three-manifold with boundary and consider Fermi coordinates centred at $x_0\in \d M$.  Let $G$ be a smooth positive function on $M\backslash\{x_0\}$ written near $x_0$ as
	$$
	G(z)=|z|^{-1}+\phi(z)
	$$ 
	where $\phi$ is smooth on $M\backslash\{x_0\}$ satisfying $\phi(z)=O(|\log|z|\,|)$. If we define the metric $\hat g=G^{4}g$ and set 
	\begin{align*}
		I(x_0,\rho)
		&=8\int_{S^+_{\rho}}\left(|z|^{-1}\d_aG(z)-\d_a|z|^{-1}G(z)\right)\frac{z_a}{|z|}d\sigma
		\\
		&-12\int_{S^+_{\rho}}|z|^{-5}z_3z_i z_j\pi_{ij}(x_0)d\sigma\,,
	\end{align*}
	then $(M\backslash \{x_0\},\hat g)$ is asymptotically flat in the sense of Definition \ref{def:asym} with mass
	$$
	m(\hat g)=\lim_{\rho\to 0}I(x_0,\rho).
	$$
\end{proposition}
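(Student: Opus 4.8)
The plan is to build asymptotic coordinates by inverting the Fermi coordinates about the puncture, and then to match the mass integral at infinity with the integral $I(x_0,\rho)$ through a change of variables. Concretely, I would set $y^a=z^a/|z|^2$, the Kelvin-type inversion, which sends a punctured half-ball about $x_0$ diffeomorphically onto the exterior of a half-ball in $\mathbb{R}^3_+$ and preserves both the boundary $\{z_3=0\}=\{y_3=0\}$ and the half-space $\{z_3>0\}=\{y_3>0\}$. Under this map $|y|=|z|^{-1}$, so the half-spheres $S^+_\rho$ (where $|z|=\rho$) correspond to the coordinate half-spheres $|y|=\rho^{-1}=:R$, and letting $\rho\to0$ is the same as letting $R\to\infty$ in the definition \eqref{def:mass} of the mass.

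Next I would expand $\hat g=G^4g$ in the $y$-coordinates. Writing $G=|z|^{-1}+\phi=|y|+\phi$ and using that the flat part of $g$ contributes the inversion factor $|dz|^2=|y|^{-4}|dy|^2$, the conformally rescaled flat metric becomes $G^4|y|^{-4}\delta_{ab}=(1+\phi|y|^{-1})^4\delta_{ab}=\delta_{ab}+4\phi|y|^{-1}\delta_{ab}+\cdots$, which is Euclidean to leading order and already exhibits asymptotic flatness. The two genuinely contributing corrections are the conformal one, of size $\phi|y|^{-1}=O(|y|^{-1}\log|y|)$, and the one carried by the metric expansion \eqref{exp:g}, whose leading nonflat term $-2\pi_{ij}(x_0)z_3$ becomes $-2\pi_{ij}(x_0)y_3|y|^{-2}=O(|y|^{-1})$ after inversion (together with the Jacobian factors of the inversion acting on the tangential indices). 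Both are $O(|y|^{-q})$ for any $q<1$, and the corresponding first and second derivative bounds follow from differentiating the expansion of $\phi$ (with the elliptic derivative estimates on $\phi$ available in the intended application); hence Definition \ref{def:asym} holds with any order $q\in(1/2,1)$, which suffices since $q>\frac{n-2}{2}=\frac12$.

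The core of the argument is to insert this expansion into \eqref{def:mass}, carry out the contraction $\sum_{a,b}(\hat g_{ab,b}-\hat g_{bb,a})y_a/|y|$ together with the boundary term $\sum_i\hat g_{3i}y_i/|y|$, and change variables back to $z$ on each sphere. For the conformal correction $f:=4\phi|y|^{-1}$ one has $\hat g_{ab}=(1+f)\delta_{ab}$, so $\hat g_{ab,b}-\hat g_{bb,a}=(1-n)f_{,a}=-2f_{,a}$, and contracting with $y_a/|y|$, undoing the inversion, and transporting the area element $d\sigma_y=\rho^{-4}d\sigma_z$ turns this into exactly $8\int_{S^+_\rho}(|z|^{-1}\partial_r\phi+|z|^{-2}\phi)\,d\sigma$; a direct computation shows the integrand of the first term of $I$ equals precisely $|z|^{-1}\partial_r\phi+|z|^{-2}\phi$, so the factor $8=4(n-1)$ records the conformal weight and the dimension. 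The metric term $-2\pi_{ij}(x_0)z_3$ contributes nothing through the conformal block but does enter both the interior divergence and the boundary term $\hat g_{3i}$; after differentiation, contraction, and the inversion Jacobian it produces the second piece $-12\int_{S^+_\rho}|z|^{-5}z_3z_iz_j\pi_{ij}(x_0)\,d\sigma$, the structure $z_3z_iz_j|z|^{-5}$ being the image of $\pi_{ij}y_3$ under the pullback. Summing the two pieces yields $I(x_0,\rho)$ and gives $m(\hat g)=\lim_{\rho\to0}I(x_0,\rho)$; this parallels the computations in \cite{brendle-chen} and \cite{almaraz-barbosa-lima}.

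The main obstacle is convergence as $\rho\to0$ and control of the logarithmic part of $\phi$. The leading flux integrand $|z|^{-2}\phi$, integrated over a half-sphere of area $O(\rho^2)$, is a priori only $O(\log(1/\rho))$, so I must rule out a genuine logarithmic divergence; this is where the structure of $G$ enters, since $\phi=G-|z|^{-1}$ satisfies an elliptic equation with integrable right-hand side and a divergence-theorem argument on the annulus $\rho<|z|<\rho_0$ forces the flux $\int_{S^+_\rho}(|z|^{-1}\partial_r\phi+|z|^{-2}\phi)\,d\sigma$ to stabilize. (The second integral of $I$ converges trivially, its integrand being homogeneous of degree $-2$ and hence $O(1)$ on $S^+_\rho$.) The same annulus estimate controls the discarded error terms, namely the quadratic and higher pieces $O((\phi|y|^{-1})^2)$ in the expansion of $G^4$, the $O(|z|^2)=O(|y|^{-2})$ remainder in \eqref{exp:g}, and the $\phi$--$\pi_{ij}$ cross terms, each of which carries an extra power of $|z|$ and is therefore $O(\rho(\log(1/\rho))^k)\to0$. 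A final technical point is the boundary integral of \eqref{def:mass}: although $g_{3i}=0$ in Fermi coordinates, the inverted components $\hat g_{3i}$ need not vanish, so I would track this term explicitly and verify that its surviving part combines with the interior divergence to produce exactly the coefficients $8$ and $-12$ above, the remainder vanishing with $\rho$.
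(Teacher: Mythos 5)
Your proposal follows essentially the same route as the paper's proof: invert the Fermi coordinates via $y_a=|z|^{-2}z_a$, read off asymptotic flatness from $\hat g_{ab}=\delta_{ab}+O(|y|^{-1}|\log|y||)$, and identify the ADM-type flux at radius $\rho^{-1}$ with $I(x_0,\rho)$ up to errors vanishing as $\rho\to0$. The paper compresses the middle step by citing \cite[Proposition 4.3]{brendle-chen}, whereas you carry out the conformal block explicitly; your computation there (the factor $(1-n)f_{,a}=-2f_{,a}$, the area element $d\sigma_y=\rho^{-4}d\sigma_z$, and the identity of the first integrand of $I$ with $|z|^{-1}\partial_r\phi+|z|^{-2}\phi$) is the correct explicit version of what the citation supplies.

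The one place your sketch goes astray is the boundary term of \eqref{def:mass}. You assert that the inverted components $\hat g_{3i}$ ``need not vanish'' and that the $\pi_{ij}$-block enters the boundary integral, whose surviving part should combine with the interior divergence to produce the coefficient $-12$. In fact the boundary integral vanishes identically, and this is precisely the paper's closing observation: in Fermi coordinates $g_{i3}\equiv 0$ and $g_{33}\equiv 1$ everywhere (not only on $\partial M$), and the Jacobian of the inversion, $\partial z_a/\partial y_b=|y|^{-2}\left(\delta_{ab}-2y_ay_b|y|^{-2}\right)$, is block-diagonal on $\{y_3=0\}$; hence $\hat g\left(\partial/\partial y_i,\partial/\partial y_3\right)=0$ there for $i=1,2$, and $\int \hat g_{3i}\,y_i/|y|\,d\bar\sigma$ contributes nothing. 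Consequently all of $I(x_0,\rho)$, including the $-12\int_{S^+_{\rho}}|z|^{-5}z_3z_iz_j\pi_{ij}(x_0)\,d\sigma$ piece, comes from the interior flux alone. Had you carried out your ``track and verify'' step you would have found a zero boundary contribution, so the argument self-corrects, but as written the expected bookkeeping is wrong. Two smaller points: the first- and second-derivative decay required by Definition \ref{def:asym} does not follow from the pointwise bound $\phi=O(|\log|z||)$ alone (both you and the paper implicitly rely on the elliptic estimates available for the Green's function in the intended application), and your annulus argument for the existence of $\lim_{\rho\to 0}I(x_0,\rho)$ likewise invokes an equation for $\phi$ that is not among the hypotheses; the paper sidesteps this by proving only the identity between the two flux integrals, existence of the mass being supplied by the integrability assumptions under which \eqref{def:mass} is defined (in the application $R_{\hat g}=h_{\hat g}=0$).
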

\bp
Consider inverted coordinates $y_a=|z|^{-2}z_a$.
The first statement follows from the fact that $\hat{g}\left(\frac{\d}{\d y_a},\frac{\d}{\d y_b}\right)=\delta_{ab}+O(|y|^{-1}|\log |y||)$.
In order to prove the last one, we can mimic the proof of \cite[Proposition 4.3]{brendle-chen} to obtain
\ba
\int_{S^+_{\rho^{-1}}}\frac{y_a}{|y|}\frac{\d}{\d y_b}
\hat{g}\left(\frac{\d}{\d y_a},\frac{\d}{\d y_b}\right)\ds_{\rho^{-1}}
&-\int_{S^+_{\rho^{-1}}}\frac{y_a}{|y|}\frac{\d}{\d y_a}
\hat{g}\left(\frac{\d}{\d y_b},\frac{\d}{\d y_b}\right)\ds_{\rho^{-1}}\notag
\\
&=\mathcal{I}(x_0,\rho)+O(\rho\, (\log\rho)^2)\,.\notag
\end{align}
Since $(z_1, z_2 ,z_3)$ are Fermi coordinates,
$$
\hat{g}\left(\frac{\d}{\d y_1}, \frac{\d}{\d y_3}\right)=\hat{g}\left(\frac{\d}{\d y_2}, \frac{\d}{\d y_3}\right)=0\,,
\:\:\:\text{if}\:y_3=0\,,
$$
the result then follows.
\ep

In the following proposition we relate the flux integral $I$ with the Pohozaev term $P'$.
\begin{proposition}\label{I:P'}
Under the hypotheses of Proposition \ref{propo:I:mass}, if $h_g=0$, we have
\begin{align*}
	P'(G,\rho)=-\frac{1}{16}I(x_0,\rho)+O(\rho\, |\log \rho|).
\end{align*}
\end{proposition}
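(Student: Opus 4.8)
The plan is to reduce everything to Lemma~\ref{propo:P'} and then match the resulting boundary integrals term by term against the two pieces of $I(x_0,\rho)$. Applying Lemma~\ref{propo:P'} with $u=G$ and $\phi(z)=G(z)-r^{-1}$, I would write $P'(G,\rho)=A(\rho)+B(\rho)$, where
$$A(\rho)=\frac12\int_{S^+_\rho}\Big(\tfrac{\d}{\d r}r^{-1}\,\phi-r^{-1}\tfrac{\d\phi}{\d r}\Big)\,\ds$$
is the part linear in $\phi$ and $B(\rho)$ collects the terms quadratic in $\phi$ and $d\phi$. The strategy is then to show three things: that $A(\rho)$ reproduces exactly $-\tfrac1{16}$ times the flux part of $I$, that the second fundamental form part of $I$ vanishes identically under the hypothesis $h_g=0$, and that $B(\rho)=O(\rho|\log\rho|)$.

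For the flux matching I would use that $\tfrac{z_a}{|z|}\d_a=\d_r$, so the first integral in the definition of $I$ is exactly
$$8\int_{S^+_\rho}\Big(r^{-1}\d_r G+r^{-2}G\Big)\,\ds.$$
Substituting $G=r^{-1}+\phi$, the singular contributions cancel and this reduces to $8\int_{S^+_\rho}(r^{-1}\d_r\phi+r^{-2}\phi)\,\ds$, which upon expanding $\tfrac{\d}{\d r}r^{-1}=-r^{-2}$ in $A$ is precisely $-16\,A(\rho)$. This matching is a pointwise algebraic identity and carries no error term, even though each side may grow like $|\log\rho|$ as $\rho\to0$.

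Next, on $S^+_\rho$ I would write $z=\rho\theta$ with $\theta$ in the unit upper hemisphere and $\ds=\rho^2\,d\omega$, so the second fundamental form integral in $I$ collapses to the $\rho$-independent quantity $\pi_{ij}(x_0)\int_{\{|\theta|=1,\theta_3\geq0\}}\theta_3\theta_i\theta_j\,d\omega$. By the rotational symmetry of the hemisphere about the $\theta_3$-axis this tensor is a multiple of $\delta_{ij}$, hence the integral equals a constant times $\mathrm{tr}\,\pi=(n-1)h_g(x_0)=0$. Thus the whole $\pi$-term of $I$ drops out, and one is left with $-\tfrac1{16}I(x_0,\rho)=A(\rho)$.

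It then remains to estimate $B(\rho)$. Using the simplification $r(\d_r\phi)^2-r|d\phi|^2=-r|d_{\mathrm{tang}}\phi|^2$, the integrand of $B$ becomes $-r|d_{\mathrm{tang}}\phi|^2+\phi\,\d_r\phi+r(\d_r\phi)^2$. Here the hypothesis $\phi=O(|\log|z||)$ alone is not enough: the crude scaling bound $|\nabla\phi|=O(r^{-1}|\log r|)$ would only give $O(\rho(\log\rho)^2)$. The real obstacle, and the main technical input I expect to need, is to upgrade this to the refined expansion $\phi=\Phi(\theta)+c\log r+(\text{higher order})$ of the regular part of the Green's function near the boundary singularity, with $\Phi$ a bounded function on the hemisphere and $c$ constant; this yields $\phi=O(|\log r|)$ together with the sharper $\d_r\phi=O(r^{-1})$ and $d_{\mathrm{tang}}\phi=O(r^{-1})$. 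Granting these, the terms $-r|d_{\mathrm{tang}}\phi|^2$ and $r(\d_r\phi)^2$ are $O(r^{-1})$ and contribute $O(\rho)$ against $\ds=\rho^2\,d\omega$, while the cross term $\phi\,\d_r\phi=O(r^{-1}|\log r|)$ produces the $O(\rho|\log\rho|)$ error. Combining the three steps gives $P'(G,\rho)=-\tfrac1{16}I(x_0,\rho)+O(\rho|\log\rho|)$.
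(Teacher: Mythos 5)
Your proposal is correct and follows essentially the same route as the paper's proof: decompose $P'(G,\rho)$ via Lemma \ref{propo:P'}, observe that the singular parts cancel so that the part of $P'$ linear in $\phi$ equals $-\frac{1}{2}\int_{S^+_\rho}\left(r^{-1}\partial_r\phi+r^{-2}\phi\right)d\sigma$, i.e.\ exactly $-\frac{1}{16}$ times the flux part of $I$, and kill the second fundamental form term of $I$ by the rotational symmetry of the hemisphere together with $\mathrm{tr}\,\pi_{ij}(x_0)=2h_g(x_0)=0$. The only point where you diverge from the paper is the quadratic remainder $B(\rho)$: the paper simply asserts the $O(\rho\,|\log\rho|)$ bound when applying Lemma \ref{propo:P'}, whereas you correctly note that the stated hypothesis $\phi=O(|\log|z||)$ carries no derivative information, and that the bound $|\nabla\phi|=O(r^{-1}|\log r|)$ (which is what elliptic estimates give in the intended application, where $G$ is a Green's function of $L_g$) would only yield $O(\rho(\log\rho)^2)$. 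Your fix via a refined expansion of the regular part of the Green's function is a legitimate way to recover the literal statement, but it is worth noting that the weaker bound $O(\rho(\log\rho)^2)$ --- the same size of error the paper itself accepts in the proof of Proposition \ref{propo:I:mass} --- is all that is needed downstream: Proposition \ref{I:P'} is only used in Section \ref{sec:pf:thm} to conclude $\lim_{\rho\to 0}P'(G_{(1)},\rho)<0$ from $m(\hat g)>0$, for which any $o(1)$ error suffices.
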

\bp
Since $\text{tr}\,(\pi_{ij}(x_0))=2h_g(x_0)=0$, we have
\begin{align*}
-12\int_{S^+_{\rho}}&|z|^{-5}z_3z_i z_j \pi_{ij}(x_0)d\sigma
=0
\end{align*}
by symmetry arguments. A direct calculation shows
\begin{align*}
\int_{S^+_{\rho}}&\left(|z|^{-1}\d_aG(z)-\d_a|z|^{-1}G(z)\right)\frac{z_a}{|z|}d\sigma
\\
&=\int_{S^+_{\rho}}\left(|z|^{-1}\d_a\phi(z)-\d_a|z|^{-1}\phi(z)\right)\frac{z_a}{|z|}d\sigma\,,
\end{align*}
and so
\begin{align*}
I(x_0,\rho)=8\int_{\d^+B^+_{\rho}(0)}\left(|z|^{-1}\d_a\phi(z)-\d_a|z|^{-1}\phi(z)\right)\frac{z_a}{|z|}d\sigma\,.
\end{align*}
On the other hand, using Lemma \ref{propo:P'} we obtain
\begin{align*}
P'(G,\rho)=-\frac{1}{2}\int_{S^+_{\rho}}\left(|z|^{-1}\d_a\phi(z)-\d_a|z|^{-1}\phi(z)\right)\frac{z_a}{|z|}d\sigma+O(\rho\,|\log \rho|),
\end{align*}
and the result follows.
\ep

Now we are ready to obtain the Pohozaev sign condition to be used in the proof of our main Theorem~\ref{compactness:thm}.
\begin{theorem}\label{cond:sinal}
Let $x_i\to x_0\in \partial M$ be an isolated simple blow-up point for the sequence $\{u_i\}$ of solutions to the equations \eqref{eq:blow-up} with $h_{g_i}=0$.
Suppose that $u_i(x_i)u_i\to G$ away from $x_0$, for some function $G$. Then
\begin{equation}\label{eq:cond:sinal}
\liminf_{r\to 0}P'(G,r)\geq 0\,.
\end{equation}
\end{theorem}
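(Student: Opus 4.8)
The plan is to feed the solution $u_i$ into the Pohozaev identity of Proposition~\ref{Pohozaev} on a small half-ball $B^+_r$, rescale by the factor $u_i(x_i)^2=\ei^{-1}$, and let $i\to\infty$ with $r$ fixed. Since we work at the critical exponent $p=5$ we have $\tau=0$, so the $f$- and $\bar f$-weighted integrals disappear; moreover the two coefficients $\frac{n}{p+1}-\frac{n-2}{2}$ and $\frac{2(n-1)}{p+3}-\frac{n-2}{2}$ both vanish in the critical case, and $(B_{g_i}-\d_n)u_i=-\frac{n-2}{2}h_{g_i}u_i=0$ because $h_{g_i}=0$. Thus the identity collapses to
$$P(u_i,r)=-\int_{B^+_r}\Big(z^a\d_a u_i+\tfrac12 u_i\Big)(L_{g_i}-\D)u_i\,dz.$$

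First I would handle the left-hand side. On the fixed hemisphere $S^+_r$ we have $u_i(x_i)u_i\to G$ in $C^2$ away from $x_0$, and $P'$ is a quadratic functional of the Cauchy data on $S^+_r$; by homogeneity $\ei^{-1}P'(u_i,r)=P'(u_i(x_i)u_i,r)\to P'(G,r)$. The terms distinguishing $P$ from $P'$ carry the weights $\ei^{-1}u_i^{6}$ and $\ei^{-1}u_i^{3}$ on $S^+_r$, which tend to $0$ since $u_i(x_i)\to\infty$ there. Hence $\ei^{-1}P(u_i,r)\to P'(G,r)$.

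The core of the argument is the right-hand side. Rescaling $z=\ei y$ and using $u_i(\psi_i(\ei y))=\ei^{-1/2}v_i(y)$ together with the conformal scaling $(L_{g_i}-\D)u_i(z)=\ei^{-5/2}(L_{\hat g_i}-\D)v_i(y)$, the interior integral transforms into $\int_{\{|y|\le r\ei^{-1}\}}(y^a\d_a v_i+\frac12 v_i)(L_{\hat g_i}-\D)v_i\,dy$; write $A_i(r)$ for $\ei^{-1}$ times this quantity, so that $\ei^{-1}P(u_i,r)=-A_i(r)$. From the expansion \eqref{exp:g} one has $(L_{\hat g_i}-\D)v_i=2\ei y_3\,\pi_{kl}(\bar x_i)\d_k\d_l v_i+O(\ei^2(1+|y|)^{-1})$, while Proposition~\ref{estim:blowup:compl} gives $v_i=U_T+\phi_i+O(\ei)$ with its sharp derivative bounds and $\phi_i$ is controlled by \eqref{estim:phi'}. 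Substituting and isolating the leading term yields
$$A_i(r)=2\int_{\{|y|\le r\ei^{-1}\}}\Big(y^a\d_a U_T+\tfrac12 U_T\Big)y_3\,\pi_{kl}(\bar x_i)\,\d_k\d_l U_T\,dy+O(r).$$
The displayed leading integral vanishes for every $i$: since $U_T$ is rotationally invariant in $\bar y=(y_1,y_2)$, the angular mean of $\d_k\d_l U_T$ over each circle $\{|\bar y|=s\}$ is a multiple of $\delta_{kl}$, and contracting against $\pi_{kl}(\bar x_i)$ produces $\mathrm{tr}\,\pi(\bar x_i)=2h_{g_i}(\bar x_i)=0$, while the half-ball of integration is itself rotationally symmetric. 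Every remaining contribution — those containing $\phi_i$ or the $O(\ei)$ error of Proposition~\ref{estim:blowup:compl}, together with the $O(\ei^2)$ remainder of the operator — has an integrand bounded by $C\ei(1+|y|)^{-2}$, whose integral over $\{|y|\le r\ei^{-1}\}$ is $O(r)$ uniformly in $i$. Therefore $|A_i(r)|\le Cr$ for all large $i$.

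Combining the two sides, $P'(G,r)=\lim_{i\to\infty}\ei^{-1}P(u_i,r)=-\lim_{i\to\infty}A_i(r)$, so $|P'(G,r)|\le Cr$ and hence $\liminf_{r\to0}P'(G,r)\ge0$. I expect the one genuinely delicate point to be the third step: the leading interior integral is only logarithmically convergent, and it is precisely the trace-free condition $\mathrm{tr}\,\pi(\bar x_i)=0$ forced by $h_{g_i}=0$ that annihilates it, after which the sharp blow-up estimates of Section~\ref{sec:blowup:estim} — rather than an explicit solution of the linear correction problem — suffice to bound the remainder by $O(r)$. This is exactly where the dimensional hypothesis $n=3$ enters the argument.
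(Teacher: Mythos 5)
Your proposal is correct and follows essentially the same route as the paper: reduce the Pohozaev identity to the interior term $F_i(u_i,r)$ (the $D_r$-integral vanishing by Fermi coordinates and $h_{g_i}=0$), rescale, approximate $v_i$ by $U_T+\phi_i$ via Proposition~\ref{estim:blowup:compl}, kill the logarithmically divergent leading term through rotational symmetry and $\mathrm{tr}\,\pi(\bar x_i)=0$, and bound all remaining contributions by $O(\ei r)$ so that $\ei^{-1}P(u_i,r)\to P'(G,r)=O(r)$. The only blemishes are cosmetic: the boundary term in $P$ carries $u_i^{(p+3)/2}=u_i^4$ on $\partial D_r$ (not $u_i^3$ on $S^+_r$), which still vanishes after multiplication by $\ei^{-1}$, so your conclusion is unaffected.
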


\bp
Let $\bar x_i$ be the point on $\partial M$ closest to $x_i$.
We use Fermi coordinates $\psi_i=(z_1,...,z_n)$ centred at $\bar x_i$ and omit the symbol $\psi_i$ to simplify the notation. We define $v_i(y)=\ei^{\frac{1}{2}}u_i(\ei y)$ for $y\in \Bei$ and have 
\begin{align}\notag
\begin{cases}
L_{\hat{g}_i}v_i+3v_i^{5}=0,&\text{in}\:\Bei,
\\
B_{\hat{g}_i}v_i-Tv_i^{3}=0,&\text{on}\:\Dei,
\end{cases}
\end{align}
where $\hat{g}_i$ is the metric with coefficients $(\hat{g}_i)_{kl}(y)=(g_i)_{kl}(\ei y)$.
Observe that, from Remark \ref{rk:estim:simples}, we know that $v_i(y)\leq C(1+|y|)^{-1}$ in $B^+_{\delta\ei^{-1}}$.

According to Proposition \ref{form:bolha} and Lemma \ref{lem:bddTi}, $v_i$ converges locally in $C^2$ to $U_T$ in the case $T\geq 0$, or to $\lambda^{\frac{2-n}{2}}U_T(\lambda^{-1}y)$ in the case $T<0$. 
In order to simplify the notations, we will consider only the case $T\geq 0$ in the rest of this proof. For the case $T<0$ the only necessary modification would be replacing $U_T$ by $\lambda^{\frac{2-n}{2}}U_T(\lambda^{-1}y)$ in what follows.

If $\Delta$ is the Euclidean Laplacian, we set
$$
F_i(u,r)=-\int_{B_r^+}(z^b\partial_bu+\frac{1}{2}u)(L_{g_i}-\Delta)u\,dz
$$
and write the Pohozaev identity of Proposition \ref{Pohozaev} as
\begin{equation}\label{Pohoz}
P(u_i,r)=F_i(u_i,r)\,.
\end{equation}
Here, the integral on $D_r$ vanishes as we are using Fermi coordinates and assuming that $h_{g_i}=0$. 
Observe that 
\begin{align*}
F_i(u_i,r)=\int_{B_{r\epsilon_i^{-1}}^+}(y^b\partial_bv_i+\frac{1}{2}v_i)(L_{\hat{g}_i}-\Delta)v_idy.
\end{align*}
On the other hand, if $\check{U}_i(z)=\ei^{-\frac{1}{2}}U_T(\ei^{-1}z)$ and $\check{\phi}_i(z)=\ei^{-\frac{1}{2}}\phi_i(\ei^{-1}z)$, we have
\begin{align}
F_i(\check{U}_i+\check{\phi}_i,r)
&=-\int_{B_r^+}(z^b\partial_b(\check{U}_i+\check{\phi}_i)
+\frac{1}{2}(\check{U}_i+\check{\phi}_i)(L_{g_i}-\Delta)(\check{U}_i+\check{\phi}_i) dz\notag
\\
&={-}\int_{B_{r\epsilon_i^{-1}}^+}\left(y^b\partial_b(U_T+\phi_i)
+\frac{1}{2}(U_T+\phi_i)\right)\notag
(L_{\hat{g}_i}-\Delta)(U_T+\phi_i)dy\,.
\notag
\end{align}

It follows from Proposition \ref{estim:blowup:compl} that 
\begin{equation}\label{approx:F}
|F_i(u_i,r)-F_i(\check{U}_i+\check{\phi}_i,r)|\leq C \ei^2\int_{B_{r\epsilon_i^{-1}}^+}(1+|y|)^{-2}dy
\leq C\ei r\,.
\end{equation}
Using \eqref{exp:g} and recalling that $h_{g_i}=0$, due to symmetry arguments we have
$$
F_i(\check{U}_i+\check{\phi}_i,r)=O(\ei r)\,.
$$
Hence, $P(u_i,r)\geq -C\ei r$, which implies that
$$
P'(G,r)=\lim_{i\to \infty}\ei^{-1}P(u_i,r)\geq -Cr\,.
$$
The conclusion follows if we let $r\rightarrow 0$.
\ep

Once we have proved Theorem \ref{cond:sinal}, the next two propositions are similar to \cite[Lemma 8.2, Proposition 8.3]{khuri-marques-schoen} or \cite[Propositions 4.1 and 5.2]{li-zhu2}.
\begin{proposition}\label{isolado:impl:simples}
	 Let $x_i\to x_0$ be an isolated  blow-up point for the sequence $\{u_i\}$ of solutions to the equations \eqref{eq:blow-up} with $h_{g_i}=0$. Then $x_i\to x_0$ is an isolated simple blow-up point for $\{u_i\}$.
\end{proposition}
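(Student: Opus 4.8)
The plan is to argue by contradiction using the refined asymptotics and the Pohozaev sign restriction. Suppose that $x_i\to x_0$ is an isolated blow-up point that is \emph{not} simple. By the definition of isolated simple (Definition \ref{def:simples}) and Remark \ref{rk:def:equiv}, the failure of simplicity means that the function $w_i(r)=r^{\frac{2}{p_i-1}}\bar u_i(r)$ has at least two critical points in $(0,\delta)$ for every $\delta$; equivalently, after the rescaling that produces $v_i$, the spherical average $r\mapsto r^{\frac{2}{p_i-1}}\bar v_i(r)$ has a second critical point $R_i$ which necessarily escapes to infinity, $R_i\to\infty$. The idea is that a second critical point forces the blow-up profile to deviate from a single standard bubble at a larger scale, and one re-centers and re-rescales at this new scale to extract a second limiting solution.

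First I would set up this secondary rescaling: define $\tilde v_i(y)=R_i^{\frac{2}{p_i-1}}v_i(R_i y)$ (in appropriate coordinates), so that the isolatedness estimate \eqref{des:isolado}, which is scale invariant, is preserved, and the spherical average of $\tilde v_i$ has a critical point near $|y|=1$. Using the Harnack inequality (Lemma \ref{Harnack}) together with the isolated bound, one obtains uniform local control away from the origin, so that $\tilde v_i$ converges in $C^2_{loc}(\overline{\mathbb R^n_+}\setminus\{0\})$ to a limit $\tilde v$. The limit solves the Euclidean problem \eqref{eq:U} on the half-space minus the origin, is nonnegative, and satisfies the decay $\tilde v(y)=O(|y|^{-\frac{2}{p-1}})=O(|y|^{2-n})$ coming from the isolated estimate. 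Crucially, the spherical average of $\tilde v$ has a critical point at $|y|=1$, so $\tilde v$ cannot be one of the standard bubbles \eqref{fam:U} (whose averages, by Remark \ref{rk:def:equiv}, are strictly monotone after the first critical point); hence $\tilde v$ is singular at the origin.

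The next step is to identify the singular limit. Since $\tilde v$ is a positive singular solution of the Euclidean half-space Yamabe problem with the stated decay, a removable-singularity/classification argument (analogous to the Liouville theorem cited after \eqref{fam:U}, applied to the punctured domain) forces $\tilde v$ to have the form $a\,|y|^{2-n}+b+O(|y|)$ near the origin with $a>0$, i.e. $\tilde v(y)=a\,G(y)$ where $G$ is a Green's-function-type object; in particular $r^{-(n-2)}\tilde v\to a$ as $r\to 0$. Then I would apply the Pohozaev sign restriction. Rescaling the Pohozaev quantity $P'$ to the $R_i$-scale, the sign condition \eqref{eq:cond:sinal} of Theorem \ref{cond:sinal} gives $\liminf_{r\to 0}P'(\tilde v,r)\ge 0$. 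On the other hand, for a singular solution behaving like $a\,|y|^{2-n}+b+\dots$, a direct computation of $P'$ (via Lemma \ref{propo:P'}, expanding $\phi=\tilde v-a|z|^{2-n}$) shows that $P'(\tilde v,r)\to -c_n\,a\,b$ for an explicit positive constant $c_n$, with $b$ the regular part; matching this against the positivity forces $b\le 0$, while a separate maximum-principle or energy argument (using that $\tilde v>0$ and the boundary condition) forces $b>0$, a contradiction.

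The main obstacle I anticipate is the boundary behavior in this secondary-scale analysis: unlike the interior case treated in \cite{li-zhu2, khuri-marques-schoen}, here the new center $\bar x_i$ and the parameter $T_i$ (cf. Lemma \ref{lem:bddTi}) interact with the rescaling, and one must track whether the second critical point occurs at a scale where the boundary is still ``visible'' (so the half-space model \eqref{eq:U} with its Neumann-type condition is the correct limit) or has receded. Controlling this—ensuring $\tilde v$ solves the \emph{half-space} problem with the right boundary condition rather than the full-space one, and that the Pohozaev boundary integral on $D_r$ behaves correctly under the combined rescaling—is the delicate point, and is precisely where the hypotheses $h_{g_i}=0$ and the Fermi-coordinate normalization are used to kill the boundary terms by symmetry, as in the proof of Theorem \ref{cond:sinal}. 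I expect the remaining estimates to parallel \cite{han-li1} and \cite{almaraz-queiroz-wang} closely.
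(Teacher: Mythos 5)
Your overall strategy --- contradiction, secondary rescaling at the scale $\mu_i$ of the second critical point of $w_i$, and an application of the Pohozaev sign restriction at that scale --- is indeed the route intended by the paper, which proves this proposition by citing the standard arguments of Khuri--Marques--Schoen and Li--Zhu. However, two steps in your execution are genuinely wrong. First, the convergence step: you claim that $\tilde v_i$ itself converges in $C^2_{loc}$ away from the origin to a nontrivial limit $\tilde v$ solving the nonlinear problem \eqref{eq:U}. This cannot happen. The whole point of rescaling at the \emph{second} critical point is that $0$ becomes an isolated \emph{simple} blow-up point for $\tilde v_i$ (the weighted spherical average now has exactly one critical point in $(0,1)$); then Proposition \ref{estim:simples}(a) gives $\tilde v_i(0)\tilde v_i(y)\le C|y|^{2-n}$, and since $\tilde v_i(0)=(\mu_i/\ei)^{\frac{2}{p_i-1}}\to\infty$, the unnormalized sequence converges to zero locally uniformly away from $0$. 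One must instead pass to the limit in the normalized sequence $\tilde v_i(0)\tilde v_i\to h$, and then the nonlinear terms die, since $\tilde v_i(0)\tilde v_i^{p_i}=\tilde v_i(0)^{-(p_i-1)}\bigl(\tilde v_i(0)\tilde v_i\bigr)^{p_i}\to 0$ and similarly for the boundary term; so $h$ is a singular solution of the \emph{linear} problem (harmonic with Neumann condition on the punctured half-space, hence harmonic on punctured $\R^n$ after reflection), not of \eqref{eq:U}. Note also that your decay claim is off: the isolated estimate \eqref{des:isolado} gives $|y|^{-\frac{2}{p-1}}=|y|^{-\frac{n-2}{2}}$, not $|y|^{2-n}$; the stronger decay is exactly what simplicity, via Proposition \ref{estim:simples}, buys you. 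Simplicity of the rescaled blow-up point is also what legitimizes applying Theorem \ref{cond:sinal} to the rescaled sequence, since that theorem is stated only for isolated simple blow-up points; your proposal never records this observation.

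Second, the strict positivity of the regular part, which is the heart of the contradiction, is obtained by the wrong mechanism in your proposal. Writing $h(y)=a|y|^{2-n}+b(y)$ (B\^{o}cher decomposition after reflection), positivity of $h$ together with a Liouville argument gives that $b$ is a constant $b_0\ge 0$ --- but $b_0=0$ is perfectly consistent with the Pohozaev sign restriction, since then $P'(h,r)\to 0$; no maximum-principle or energy argument ``using $\tilde v>0$ and the boundary condition'' can upgrade this to $b_0>0$. What forces $b_0>0$ (indeed $b_0=a$) is precisely the surviving critical point of $r^{\frac{n-2}{2}}\bar h(r)$ at $r=1$, inherited from the defining property of $\mu_i$: since $b$ is constant, $\frac{d}{dr}\bigl[a\,r^{-\frac{n-2}{2}}+b_0\,r^{\frac{n-2}{2}}\bigr]\big|_{r=1}=0$ yields $b_0=a$, and $a>0$ because the singularity is non-removable by the lower bound of Proposition \ref{estim:simples}(b). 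In your write-up the critical point at $r=1$ is used only to argue that the limit is singular and is then discarded; this leaves the decisive strict inequality unproved, so the contradiction with $\liminf_{r\to0}P'(h,r)\ge 0$ never materializes.
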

\begin{proposition} \label{dist:unif}
	Let $\b, R, u, C_0(\b,R)$ and $\{x_1,...,x_N\}\subset M$ be as in Proposition \ref{conj:isolados} with $p=(n+2)/(n-2)$ and $h_g=0$. If $\b$ is sufficiently small and $R$ is  sufficiently large, then there exists a constant $\bar C(\b,R)>0$ such that if  $\max_{M}u\geq C_0$ then 
	 $$d_{\bar g}(x_j,x_k)\geq \bar C\:\:\:\:\:\text{for all}\:1\leq j\neq k\leq N.$$
\end{proposition}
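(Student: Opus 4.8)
The plan is to argue by contradiction and to reduce, via rescaling at the minimal blow-up distance, to the Liouville-type classification of the limiting bubble; in dimension three this is a soft argument that does not require the Pohozaev sign restriction of Theorem \ref{cond:sinal}. Suppose the conclusion fails. Then, passing to a subsequence, there is a sequence $\{u_i\}$ of solutions of \eqref{eq:blow-up} with $\max_M u_i\ge C_0$ and associated local maxima $x_1,\dots,x_{N_i}$ as in Proposition \ref{conj:isolados}, such that $\sigma_i:=\min_{j\neq k}d_{\bar g}(x_j,x_k)\to 0$. Relabelling, I assume the minimum is attained by $x_1$ and $x_2$, so that $d_{\bar g}(x_1,x_2)=\sigma_i$ and $x_1,x_2$ are mutually nearest among the blow-up points. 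The first observation is that $d_{g}(x_1,\{x_k\})=d_{g}(x_2,\{x_k\})=\sigma_i$, so part (3) of Proposition \ref{conj:isolados} (with $p=5$, hence $2/(p-1)=1/2$) yields $C_0\le u_i(x_1)\sigma_i^{1/2}\le C_1$ and $C_0\le u_i(x_2)\sigma_i^{1/2}\le C_1$. In particular the two heights are comparable and the natural blow-up scale $\ei=u_i(x_1)^{-2}$ at $x_1$ satisfies $\ei\asymp\sigma_i$.

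Next I would rescale at scale $\sigma_i$. Writing $\psi_i$ for Fermi coordinates centred at the boundary point closest to $x_1$ (or geodesic normal coordinates centred at $x_1$ when $x_1$ stays at distance much larger than $\sigma_i$ from $\partial M$), I set $v_i(y)=\sigma_i^{1/2}u_i(\psi_i(\sigma_i y))$. Then $v_i$ solves a rescaled version of \eqref{eq:blow-up}, with rescaled metric $(g_i)_{ab}(\psi_i(\sigma_i\,\cdot))\to\delta_{ab}$ in $C^2_{loc}$, with $f_i^{-\tau_i}\to 1$ and $p_i\to 5$. The rescaled points $\tilde x_k=\sigma_i^{-1}\psi_i^{-1}(x_k)$ are mutually separated by distance $\ge 1$, and after a further subsequence $\tilde x_1\to\tilde x_1^{*}$, $\tilde x_2\to\tilde x_2^{*}$ with $|\tilde x_1^{*}-\tilde x_2^{*}|=1$. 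Part (3) of Proposition \ref{conj:isolados} is scale invariant, so $v_i(y)\le C_1\,\dist(y,\{\tilde x_k\})^{-1/2}$ in the rescaled distance, which bounds $v_i$ uniformly on compact subsets avoiding the points $\tilde x_k^{*}$. Near each $\tilde x_k^{*}$ the bubble approximation of part (2), together with $\ei\asymp\sigma_i$ and the height bound $v_i(\tilde x_k)\le C_1$, shows that $v_i$ is bounded by a fixed multiple of a standard bubble on a fixed ball around $\tilde x_k^{*}$. Combining these, $v_i$ is uniformly bounded on every compact subset of the limiting half-space (respectively full space), so by interior and boundary elliptic estimates $v_i\to w$ in $C^2_{loc}$, where $w$ solves the limiting equation \eqref{eq:U} with $c=-T$ (respectively its interior analogue).

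Finally, since $w(\tilde x_1^{*})=\lim_i\sigma_i^{1/2}u_i(x_1)\ge C_0>0$, the limit $w$ is nontrivial, so the Liouville theorem of \cite{li-zhu} forces $w$ to be one of the standard solutions \eqref{fam:U}, which attains its maximum at a single point and is strictly decreasing away from it, hence has a unique local maximum on the (closed) half-space. On the other hand $\tilde x_1^{*}$ and $\tilde x_2^{*}$ are both limits of local maxima of $v_i$ and, by the $C^2_{loc}$ convergence, are two distinct local maxima of $w$ at mutual distance $1$. This contradicts the uniqueness of the local maximum of a bubble, and the proof is complete. I expect the main obstacle to be the second step: upgrading the convergence of $v_i$ to a genuine $C^2_{loc}$ limit across the concentration points $\tilde x_k^{*}$. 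The scale-invariant estimate of part (3) degenerates precisely at those points, so the required uniform bound there must be extracted from the bubble profile of part (2) together with the comparability $\ei\asymp\sigma_i$, while one must simultaneously verify that the rescaled metrics, the factors $f_i^{-\tau_i}$, and the exponents $p_i$ converge so that $w$ truly solves \eqref{eq:U}.
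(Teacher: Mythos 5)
Your argument has a genuine gap at its very first step, and it is not repairable within the "soft" framework you propose. You claim that part (3) of Proposition \ref{conj:isolados} gives the two-sided bound $C_0\le u_i(x_1)\sigma_i^{1/2}\le C_1$, and everything afterwards (the comparability $\epsilon_i\asymp\sigma_i$, the uniform bound on $v_i$ near the rescaled points, the $C^2_{loc}$ limit being a single bubble) hinges on the upper bound. But part (3) does not give it: the first inequality in (3) involves $d_g\bigl(x,\{x_1,\dots,x_N\}\bigr)$, the distance to the \emph{whole} set of concentration points, which vanishes at $x=x_1$, so it is vacuous there; the second inequality gives only the \emph{lower} bound $u_i(x_1)\,d_g(x_1,x_2)^{1/2}\ge C_0$. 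Nothing in Proposition \ref{conj:isolados} prevents $u_i(x_j)\sigma_i^{1/2}\to\infty$, i.e.\ bubbles far more concentrated than their mutual separation, and this is precisely the scenario that Proposition \ref{dist:unif} must exclude. In that scenario your rescaled sequence $v_i(y)=\sigma_i^{1/2}u_i(\psi_i(\sigma_i y))$ blows up at the points $\tilde x_1^*$, $\tilde x_2^*$, there is no locally uniform bound across them, no $C^2_{loc}$ limit solving \eqref{eq:U}, and the Liouville theorem of \cite{li-zhu} never enters. Your soft argument only treats the special case of comparable heights, which is not the heart of the matter; assuming the upper bound amounts to assuming away the difficulty.

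This is also why your announced goal of avoiding Theorem \ref{cond:sinal} cannot work, and why the paper places this proposition after it (``Once we have proved Theorem \ref{cond:sinal}, the next two propositions are similar to \cite[Lemma 8.2, Proposition 8.3]{khuri-marques-schoen} or \cite[Propositions 4.1 and 5.2]{li-zhu2}''). The intended argument runs: if $\sigma_i=d_{g}(x_1,x_2)\to 0$, rescale by $\sigma_i$; the scale-invariant estimate of part (3) shows that $\tilde x_1,\tilde x_2$ are \emph{isolated blow-up points} of the rescaled sequence (not bounded points), Proposition \ref{isolado:impl:simples} upgrades them to isolated \emph{simple} blow-up points, and Proposition \ref{estim:simples} then yields $v_i(\tilde x_1)v_i\to G$ away from the singularities, where $G$ is a sum of Green's-type functions with singularities at both $\tilde x_1^*$ and $\tilde x_2^*$ with positive coefficients. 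Near $\tilde x_1^*$ one has $G(y)=a_1|y-\tilde x_1^*|^{-1}+A+o(1)$ with $A>0$ coming from the singularity at $\tilde x_2^*$ (distance $1$ away), and this strictly positive constant term forces $\lim_{r\to 0}P'(G,r)<0$, contradicting the sign restriction \eqref{eq:cond:sinal}. The Pohozaev/sign-restriction input is thus essential: it is what rules out the fast-concentration regime that your proposal silently excludes.
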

\begin{corollary}\label{Corol:8.4}
Suppose the sequence $\{u_i\}$ of solutions to the equations \eqref{eq:blow-up} with $h_{g_i}=0$ satisfies $\max_{M} u_i\to\infty$. Then the set of blow-up points for $\{u_i\}$ is finite and consists only of isolated simple blow-up points.
\end{corollary}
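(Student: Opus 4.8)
The plan is to assemble Propositions \ref{conj:isolados}, \ref{dist:unif} and \ref{isolado:impl:simples}, which already contain all of the hard analysis, into the stated finiteness and structure statement; the remaining work is essentially bookkeeping. First I would fix $\beta$ small and $R$ large as required by Proposition \ref{dist:unif} and apply Proposition \ref{conj:isolados} to each $u_i$. Since $\max_M u_i\to\infty$, for $i$ large we have $\max_M u_i\geq C_0(\beta,R)$, so the proposition produces local maxima $x_1^{(i)},\dots,x_{N_i}^{(i)}$ of $u_i$ and a constant $C_1$ for which the global bound $u_i(x)\,d_{g_i}(x,\{x_1^{(i)},\dots,x_{N_i}^{(i)}\})^{\frac{2}{p-1}}\leq C_1$ holds for every $x\in M$. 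By Proposition \ref{dist:unif} these centres are $\bar C$-separated, $d_{g_i}(x_j^{(i)},x_k^{(i)})\geq\bar C$ for $j\neq k$, and since $M$ is compact and the $g_i$ are uniformly comparable to $g_0$, a covering argument yields a uniform bound $N_i\leq N_0$.

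Next I would establish finiteness. Let $x_0$ be any blow-up point, with blow-up sequence $y_i\to x_0$ and $u_i(y_i)\to\infty$. The global bound gives $d_{g_i}(y_i,\{x_j^{(i)}\})\leq (C_1/u_i(y_i))^{\frac{p-1}{2}}\to0$, so $y_i$, and hence $x_0$, lies within $o(1)$ of the nearest centre $x_{j(i)}^{(i)}$. If $x_0\neq x_0'$ were two distinct blow-up points, their nearest centres would have to be distinct for large $i$ (otherwise both points would sit within $o(1)$ of one and the same centre, forcing $x_0=x_0'$), and Proposition \ref{dist:unif} then keeps those centres at distance $\geq\bar C$, so $d_{g_0}(x_0,x_0')\geq \bar C/2$. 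Thus the blow-up points are $\bar C/2$-separated, and there are at most $N_0$ of them.

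Finally I would check isolation and simplicity. Fix a blow-up point $x_0$. For large $i$ the only centre lying inside $B_{\bar C/2}(x_0)$ is the one converging to $x_0$, so on $B_{\bar C/4}(x_0)$ the distance to the set $\{x_j^{(i)}\}$ coincides with the distance to that single centre; the global bound then localizes to exactly the estimate of Definition \ref{def:isolado}. Since that centre is itself a blow-up sequence—its value tends to infinity by the bubble normalization of Proposition \ref{conj:isolados}(2) together with Lemma \ref{Harnack}—the point $x_0$ is an isolated blow-up point, and Proposition \ref{isolado:impl:simples} upgrades it to an isolated simple blow-up point. Combining this with the finiteness established above proves the corollary.

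The part I expect to be the main obstacle is the bookkeeping in the last two steps: correctly matching each given blow-up sequence to the stable centres $x_j^{(i)}$ furnished by Proposition \ref{conj:isolados}, verifying that those centres do blow up (so that the localized estimate is genuinely the isolation estimate for a blow-up sequence through $x_0$), and handling the passage between the full sequence and subsequences so that the finite bound applies to the blow-up set of $\{u_i\}$ itself rather than merely to a subsequence. None of this requires new analysis beyond the cited results, but it is where care is needed.
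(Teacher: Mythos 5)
Your proposal is correct and takes essentially the same approach as the paper, which states the corollary without explicit proof precisely because it is the direct assembly of Propositions \ref{conj:isolados}, \ref{dist:unif} and \ref{isolado:impl:simples} (following the scheme of Khuri--Marques--Schoen) that you carry out. Your bookkeeping --- uniform separation of the centres, matching each blow-up sequence to a nearby centre whose values also blow up, localizing the global estimate of Proposition \ref{conj:isolados}(3) to obtain the isolation estimate, and then invoking Proposition \ref{isolado:impl:simples} to upgrade to isolated simple --- is exactly the intended argument.
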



\section{The proof of Theorem \ref{compactness:thm}}\label{sec:pf:thm}
Without loss of generality we will assume that $h_g=0$ along $\partial M$ as one can simply change \eqref{main:eq:crit} by a conformal factor.

In view of standard elliptic estimates and Harnack inequalities, we only need to prove that $u$ is bounded from above (see \cite[Lemma A.1]{han-li1} for the boundary Harnack inequality). Assume by contradiction there exists a sequence $u_i$ of positive solutions of (\ref{main:eq:crit}) such that 
    \begin{equation*}
    \max_{M} u_i\to\infty,\:\:\:\:\text{as}\:i\to\infty.
    \end{equation*}
    It follows from Corollary \ref{Corol:8.4} that we can assume $u_i$ has $N$ isolated simple blow-up points 
    \begin{align*}
    x_i^{(1)}\to x^{(1)},\:...\:,\: x_i^{(N)}\to x^{(N)}.
    \end{align*}
     Without loss of generality, suppose 
     \begin{align*}
     u_i(x_i^{(1)})=\min\big\{u_i(x_i^{(1)}), ..., u_i(x_i^{(N)})\big\}\:\:\:\:\text{for all}\:i.
     \end{align*}
   
Now for each $k=1,...,N$, consider the Green's function $G_{(k)}$ for the conformal Laplacian $L_{g}$ with boundary condition $B_{g}G_{(k)}=0$ and singularity at $x^{(k)}\in M$. Observe that these Green's functions exist because $Q_c(M)>0$ by hypothesis.
In Fermi coordinates centred at the respective boundary singularities, or in normal coordinates at the interior ones, those functions satisfy
    \begin{align*}
    \big|G_{(k)}(z)-|z|^{-1}\big|\leq C\big|\log|z|\,\big|,
    \end{align*}
according to \cite[Proposition B.2]{almaraz-sun}.

It follows from the upper bound (a) of Proposition \ref{estim:simples} that there exists some function $G$ such that $u_i(x_i^{(1)})u_i\to G$ in $C^2_{\text{loc}}(M\backslash  \{x^{(1)},...,x^{(N)}\})$. Moreover, the lower control (b) of that proposition and elliptic theory yields the existence of $a_k>0$, $k=1,...,N$, and $b\in C^2(M)$ such that 
    \begin{equation*}
    G=\sum_{k=1}^{N}a_kG_{(k)}+b,
    \end{equation*}
 and 
    \begin{align*}
    \begin{cases}
    L_{g}b=0,&\text{in}\:M,
    \\
    B_{g}b=0,&\text{on}\:\d M.
    \end{cases}
    \end{align*}
The hypothesis $Q_c(M)>0$ ensures that $b\equiv 0$. 

Assume that $x^{(1)}$ lies on the boundary. Otherwise, the proof follows from the same arguments as in \cite{li-zhu2}.
If $\hat g=G_{(1)}^{4}g$, by Proposition \ref{propo:I:mass}, $(M\backslash\{x^{(1)}\}, \hat g)$ is an asymptotically flat manifold (in the sense of Definition \ref{def:asym}) with mass 
    $$
    m(\hat g)=\lim_{\rho\to 0} I(x^{(1)}, \rho).
    $$
Moreover, we have
$R_{\hat g}=-8G_{(1)}^{5}L_gG_{(1)}=0$ and $h_{\hat g}=-2G_{(1)}^{3}B_gG_{(1)}=0$.
Then the Positive Mass Theorem \ref{pmt} and the assumption that $M$ is not conformally equivalent to the hemisphere gives $m(\hat g)>0$. So, by  Proposition \ref{I:P'},
    $$
    \lim_{\rho\to 0}P'(G_{(1)}, \rho)<0.
    $$
     This contradicts the local sign restriction of Theorem \ref{cond:sinal} and ends the proof of Theorem \ref{compactness:thm}.


\bigskip\noindent
\textsc{S\'ergio Almaraz\\
Instituto de Matem\'atica e Estat\' istica, \\
Universidade Federal Fluminense\\
Rua Prof. Marcos Waldemar de Freitas S/N,
Niter\'oi, RJ,  24.210-201, Brazil}\\
e-mail: {\bf{sergioalmaraz@id.uff.br}}

\bigskip\noindent
\textsc{Shaodong Wang\\
	School of Mathematics and Statistics,\\
	Nanjing University of Science and Technology\\
	Nanjing, 210094, People’s Republic of China} \\
e-mail: {\bf{shaodong.wang@mail.mcgill.ca}}

\end{document}